\definecolor{bluearr}{RGB}{58,106,161}
\definecolor{redarr}{RGB}{158,49,53}
\definecolor{greenarr}{RGB}{62,136,74}
\newcommand{\set}[1]{\mathchoice%
  {\left\lbrace #1 \right\rbrace}%
  {\lbrace #1 \rbrace}%
  {\lbrace #1 \rbrace}%
  {\lbrace #1 \rbrace}%
}
\newcommand{\setc}[2]{\mathchoice%
  {\left\lbrace #1 \, \middle\vert \, #2 \right\rbrace}%
  {\lbrace #1 \, \vert \, #2 \rbrace}%
  {\lbrace #1 \, \vert \, #2 \rbrace}%
  {\lbrace #1 \, \vert \, #2 \rbrace}%
}
\newcommand{\paren}[1]{\mathchoice%
  {\left( #1 \right)}%
  {( #1 )}%
  {( #1 )}%
  {( #1 )}%
}
\newcommand{\abs}[1]{\mathchoice%
  {\left\lvert #1 \right\rvert}%
  {\lvert #1 \rvert}%
  {\lvert #1 \rvert}%
  {\lvert #1 \rvert}%
}
\newcommand{\vphi}{\varphi}
\newcommand{\eset}{\varnothing}
\newcommand{\dirsum}{\oplus}
\newcommand{\bigdirsum}{\bigoplus}
\renewcommand{\tensor}{\otimes}
\newcommand{\union}{\cup}
\newcommand{\intersect}{\cap}
\newcommand{\comp}{\circ}
\newcommand{\cross}{\times}
\newcommand{\isom}{\cong}
\newcommand{\numset}[1]{\mathbb{#1}}
\newcommand{\Z}{\numset{Z}}
\newcommand{\R}{\numset{R}}
\newcommand{\F}[1]{\numset{F}_{#1}}
\newcommand{\cycgrp}[1]{\Z / #1 \Z}
\newcommand{\field}[1]{\mathbf{#1}}
\renewcommand{\k}{\field{k}}
\DeclareMathOperator{\id}{id}
\DeclareMathOperator{\Id}{Id}
\DeclareMathOperator{\im}{Im}
\renewcommand{\Im}{\im}
\DeclareMathOperator{\Cone}{Cone}
\DeclareMathOperator{\rk}{rk}
\newcommand{\bdy}{\partial}
\newcommand{\nbhd}[1]{\nu (#1)}
\DeclareMathOperator{\CFK}{CFK}
\DeclareMathOperator{\HFK}{HFK}
\DeclareMathOperator{\CFL}{CFL}
\DeclareMathOperator{\Kh}{Kh}
\newcommand{\ophat}[1]{\widehat{#1}}
\newcommand{\optilde}[1]{\widetilde{#1}}
\newcommand{\CFKh}{\ophat{\CFK}}
\newcommand{\HFKh}{\ophat{\HFK}}
\newcommand{\CFLt}{\optilde{\CFL}}
\newcommand{\Kht}{\optilde{\Kh}}
\newcommand{\HD}{\mathcal H}
\newcommand{\curves}[1]{\boldsymbol{#1}}
\newcommand{\alphas}[1][]{%
  \ifthenelse{\equal{#1}{}}{\curves{\alpha}}{\curves{\alpha^{#1}}}
}
\newcommand{\betas}[1][]{%
  \ifthenelse{\equal{#1}{}}{\curves{\beta}}{\curves{\beta_{#1}}}
}
\newcommand{\gen}[1]{\mathbf{#1}}
\newcommand{\genset}[1]{\mathfrak{#1}}
\newcommand{\x}{\gen{x}}
\newcommand{\y}{\gen{y}}
\newcommand{\z}{\gen{z}}
\newcommand{\w}{\gen{w}}
\DeclareMathOperator{\spin}{Spin}
\newcommand{\SpinC}{\spin^c}
\newcommand{\spinc}{\mathfrak s}
\DeclareMathOperator{\ind}{ind}
\DeclareMathOperator{\gr}{gr}
\newcommand{\emptypoly}[1]{#1^\circ}
\DeclareMathOperator{\Tri}{Tri}
\DeclareMathOperator{\Quad}{Quad}
\DeclareMathOperator{\Hex}{\Hex}
\tikzset{cdlabel/.style={above,sloped,
    execute at begin node=$\scriptstyle,execute at end node=$}}
\tikzset{algarrow/.style={->, thick}}
\tikzset{blgarrow/.style={->, thick}}
\tikzset{clgarrow/.style={->, thick}}
\tikzset{tensoralgarrow/.style={double, double equal sign distance, -implies}}
\tikzset{tensorblgarrow/.style={double, double equal sign distance, -implies}}
\tikzset{tensorclgarrow/.style={double, double equal sign distance, -implies}}
\tikzset{tensorelgarrow/.style={double, double equal sign distance, -implies}}
\tikzset{modarrow/.style={->, dashed}}
\tikzset{Amodar/.style={->, dashed}}
\tikzset{Dmodar/.style={->, dashed}}
\tikzset{DAmodar/.style={->, dashed}}
\tikzset{al/.style={->, bend right=20, thick}}
\tikzset{ar/.style={->, bend left=20, thick}}
\tikzset{als/.style={->, bend right=15, thick}}
\tikzset{ars/.style={->, bend left=15, thick}}
\newcommand{\alg}[1]{\mathcal{#1}}
\newcommand{\module}[1]{\mathcal{#1}}
\newcommand{\Ainf}{\alg{A}_\infty}
\newcommand{\moduletype}[1]{\mathit{#1}}
\newcommand{\DA}{\moduletype{DA}}
\renewcommand{\AA}{\moduletype{AA}}
\newcommand{\M}{\module{M}}
\renewcommand{\emptypoly}[2][]{%
  #2^{\ifthenelse{\equal{#1}{}}{\circ}{\circ, #1}}
}
\newcommand{\position}[2]{#1^#2}
\newcommand{\onbdy}[1]{\position{#1}{\bdy}}
\newcommand{\bdybdy}{\onbdy{\bdy}}
\newcommand{\arcdiag}{\mathcal{Z}}
\newcommand{\matchedpts}{\mathbf{a}}
\DeclareMathOperator{\inv}{inv}
\DeclareMathOperator{\SFC}{SFC}
\DeclareMathOperator{\SFH}{SFH}
\DeclareMathOperator{\BSD}{BSD}
\DeclareMathOperator{\BSA}{BSA}
\DeclareMathOperator{\BSDA}{BSDA}
\newcommand{\BSDh}{\ophat{\BSD}}
\newcommand{\BSAh}{\ophat{\BSA}}
\newcommand{\BSDAh}{\ophat{\BSDA}}
\newcommand{\orarcs}{\mathbf{Z}}
\newcommand*{\oplbar}[1]{\underline{#1}}
\DeclareMathOperator{\Gr}{Gr}
\newcommand*{\Grr}{\oplbar{\Gr}}
\newcommand*{\grr}{\oplbar{\gr}}
\newcommand*{\grP}{\mathcal{P}}
\newcommand*{\grPr}{\oplbar{\grP}}
\renewcommand*{\Id}{\mathbb{I}}
\newcommand*{\glue}[1]{\mathchoice%
  {\mathbin{\operatorname*{\union}_{#1}}}%
  {\union_{#1}}%
  {\union_{#1}}%
  {\union_{#1}}%
}
\newcommand*{\HDMan}{\HD^{\mathrm{Man}}}
\newcommand*{\HDMank}{\HDMan_k}
\newcommand*{\HDMannext}{\HDMan_{k+1}}
\newcommand*{\FMan}{F^{\mathrm{Man}}}
\newcommand*{\FMank}{\FMan_k}
\newcommand*{\HDManp}{\HD^{\mathrm{Man},\mathrm{st}}}
\newcommand*{\HDManpinfty}{\HDManp_{\infty}}
\newcommand*{\HDManpzero}{\HDManp_{0}}
\newcommand*{\HDManpone}{\HDManp_{1}}
\newcommand*{\dg}{\mathit{dg}}
\newcommand*{\DAA}{\moduletype{DAA}}
\newcommand*{\Mod}{\mathsf{Mod}}
\newcommand*{\A}{\alg{A}}
\newcommand*{\AZ}{\alg{A} (\arcdiag)}
\newcommand*{\AZi}{\alg{A} (\arcdiag, i)}
\newcommand*{\AZfive}{\alg{A} (\arcdiag, 5)}
\newcommand*{\IZfive}{\alg{I} (\arcdiag, 5)}
\newcommand*{\Iocc}[1]{I_{\overline{#1}}}
\newcommand*{\rhos}{\boldsymbol{\rho}}
\newcommand*{\Istart}{I_\mathrm{s}}
\newcommand*{\Iend}{I_\mathrm{e}}
\newcommand*{\closure}[1]{\overline{#1}}
\newcommand*{\elT}{T^{\mathrm{el}}}
\newcommand*{\Tan}{\mathcal{T}}
\newcommand*{\elTan}{\Tan^{\mathrm{el}}}
\newcommand*{\sut}{\Gamma}
\newcommand*{\mersut}{\sut^{\mathrm{el}}}
\newcommand*{\elparam}{\phi^{\mathrm{el}}}
\newcommand*{\B}{\mathcal{B}}
\newcommand*{\Bk}{\B_k}
\newcommand*{\Bnext}{\B_{k+1}}
\newcommand*{\Bprev}{\B_{k+2}}
\newcommand*{\Y}{\mathcal{Y}}
\newcommand*{\Yk}{\Y_k}
\newcommand*{\Ynext}{\Y_{k+1}}
\newcommand*{\Yprev}{\Y_{k+2}}
\newcommand*{\sutF}{\mathcal{F}}
\newcommand*{\diff}{\delta}
\newcommand*{\into}{\hookrightarrow}
\newcommand*{\source}{S^{\triangleright}}
\newcommand*{\moduli}{\mathcal{M}}
\newcommand*{\embmoduli}{\moduli_{\mathrm{emb}}}
\DeclareMathOperator{\Mor}{Mor}
\newcommand*{\homeq}{\simeq}
\newcommand*{\boxtensor}{\boxtimes}
\newcommand*{\Eta}{\boldsymbol{\eta}}
\DeclareMathOperator{\Int}{Int}
\renewcommand*{\genset}{\mathcal{G}}
\DeclareMathOperator{\Conf}{Conf}
\newcommand*{\args}[1]{\mathopen{}\left(%
    #1%
  \right)}
\newcommand*{\deltat}{\tilde{\delta}}
\newcommand*{\blank}{\scalebox{0.75}[1.0]{$\mathord{-}$}} % package textcomp
\newcommand*{\vkappa}{\kappa}
\newcommand*{\betak}{\beta^k}
\newcommand*{\betanext}{\beta^{k+1}}
\newcommand*{\betaprev}{\beta^{k+2}}
\newcommand*{\betask}{\betas^k}
\newcommand*{\betasnext}{\betas^{k+1}}
\newcommand*{\betasprev}{\betas^{k+2}}
\newcommand*{\betasone}{\betas^1}
\newcommand*{\betasinfty}{\betas^\infty}
\newcommand*{\betaszero}{\betas^0}
\newcommand*{\Ham}[1]{{#1_{\mathrm{H}}}}
\newcommand*{\betakH}{\beta^{\Ham{k}}}
\newcommand*{\betanextH}{\beta^{\Ham{(k+1)}}}
\newcommand*{\betaskH}{\betas^{\Ham{k}}}
\newcommand*{\betasnextH}{\betas^{\Ham{(k+1)}}}
\newcommand*{\betasprevH}{\betas^{\Ham{(k+2)}}}
\newcommand*{\betasoneH}{\betas^{\Ham{1}}}
\newcommand*{\betasinftyH}{\betas^{\Ham{\infty}}}
\newcommand*{\betaszeroH}{\betas^{\Ham{0}}}
\newcommand*{\etak}{\eta^{k, k+1}}
\newcommand*{\etanext}{\eta^{k+1, k+2}}
\newcommand*{\etaone}{\eta^{1, \infty}}
\newcommand*{\etainfty}{\eta^{\infty, 0}}
\newcommand*{\etazero}{\eta^{0, 1}}
\newcommand*{\etaHone}{\eta^{\Ham{1}, \infty}}
\newcommand*{\etakH}{\eta^{k, \Ham{(k+1)}}}
\newcommand*{\etazeroH}{\eta^{0, \Ham{1}}}
\newcommand*{\Etaknext}{\Eta^{k < k+1}}
\newcommand*{\Etanextnext}{\Eta^{k+1 < k+2}}
\newcommand*{\Etaprevnext}{\Eta^{k+2 < k}}
\newcommand*{\Etakprev}{\Eta^{k < k+2}}
\newcommand*{\Etanextprev}{\Eta^{k+1 < k}}
\newcommand*{\EtaknextH}{\Eta^{k < \Ham{(k+1)}}}
\newcommand*{\EtanextnextH}{\Eta^{k+1 < \Ham{(k+2)}}}
\newcommand*{\EtaprevnextH}{\Eta^{k+2 < \Ham{k}}}
\newcommand*{\EtakprevH}{\Eta^{k < \Ham{(k+2)}}}
\renewcommand*{\Theta}{\boldsymbol{\theta}}
\newcommand*{\thetaHk}{\theta^{\Ham{k} < k}}
\newcommand*{\thetaHone}{\theta^{\Ham{1} < 1}}
\newcommand*{\thetakH}{\theta^{k < \Ham{k}}}
\newcommand*{\thetaoneH}{\theta^{1 < \Ham{1}}}
\newcommand*{\ThetaHk}{\Theta^{\Ham{k} < k}}
\newcommand*{\ThetaHnext}{\Theta^{\Ham{(k+1)} < k+1}}
\newcommand*{\ThetakH}{\Theta^{k < \Ham{k}}}
\newcommand*{\HDbH}{\HD_{\betas}^{\mathrm{H}}}
\newcommand*{\HDab}{\HD_{\alphas, \betas}}
\newcommand*{\HDabH}{\HD_{\alphas, \betas}^{\mathrm{H}}}
\newcommand*{\fkH}{f_{k, \mathrm{H}}}
\newcommand*{\fprevH}{f_{k+2, \mathrm{H}}}
\newcommand*{\vphikH}{\vphi_{k, \mathrm{H}}}
\newcommand*{\vphinextH}{\vphi_{k+1, \mathrm{H}}}
\newcommand*{\PhiHk}{\Phi_{\Ham{k} < k}}
\newcommand*{\Xik}{\Xi_{k}}
\newcommand*{\Xiprev}{\Xi_{k+2}}
\newcommand*{\PhikH}{\Phi_{k < \Ham{k}}}
\newcommand*{\fcomb}{{\overline{f}}}
\newcommand*{\vphicomb}{{\overline{\vphi}}}
\newcommand*{\fcombk}{\fcomb_k}
\newcommand*{\vphicombk}{\vphicomb_k}
\newcommand*{\fcombki}{\fcomb_{k,i}}
\newcommand*{\vphicombkij}{\vphicomb_{k,ij}}
\DeclareMathOperator{\BSDAA}{BSDAA}
\newcommand*{\BSDAAh}{\ophat{\BSDAA}}
\newcommand*{\calF}{\mathcal{F}}
\newcommand*{\HC}{\mathsf{H}}
\newcommand*{\betaspH}{\betas'^{,\mathrm{H}}}
\newcommand*{\HDpH}{{\HD}'^{,\mathrm{H}}}
\DeclareMathOperator{\BSDDA}{BSDDA}
\newcommand*{\BSDDAh}{\ophat{\BSDDA}}
\newcommand*{\piece}{\mathcal{P}}
\newcommand*{\rcoset}{\backslash}
\newcommand*{\grbox}{\grr^{\mathord{\boxtimes}}}
\newcommand*{\grboxsk}{\grr^{\mathord{\boxtimes}, \mathrm{sk}}}
\newcommand*{\pihom}{\pi_{\mathrm{hom}}}
\newcommand*{\Grrsk}{\Grr^{\mathrm{sk}}_r}
\newcommand*{\grrsk}{\grr^{\mathrm{sk}}}
\newcommand*{\grrr}{\grr_r}
\newcommand*{\grrrsk}{\grrsk_r}
\newcommand*{\grPsk}{\grPr^{\mathrm{sk}}}
\newcommand*{\grrPsk}{\grPsk_r}
\newcommand*{\grPmin}{\grPr^{\mathrm{min}}}
\newcommand*{\grrPmin}{\grPmin_r}
\newcommand{\subgrp}[1]{\mathchoice%
  {\left\langle #1 \right\rangle}%
  {\langle #1 \rangle}%
  {\langle #1 \rangle}%
  {\langle #1 \rangle}%
}
\newcommand*{\Grrrr}{\Grr_{r_1, r_2, r}}
\newcommand*{\Grrrrsk}{\Grr^{\mathrm{sk}}_{r_1, r_2, r}}
\newcommand*{\grrrrboxsk}{\grboxsk_{r_1, r_2, r}}
\newcommand*{\wocc}[1]{\w_{\overline{#1}}'}
\newcommand*{\Msk}{M^{\mathrm{sk}}}
\newcommand*{\pisk}{\pi^{\mathrm{sk}}}
\newcommand*{\eqrel}{\mathord{\sim}}
\newcommand*{\eqrelsk}{\eqrel_{\mathrm{sk}}}
\DeclareMathOperator*{\PD}{PD}
\renewcommand*{\div}{\operatorname{div}}
\newcommand*{\eval}[2]{\mathchoice%
  {\left\langle #1, #2 \right\rangle}%
  {\langle #1, #2 \rangle}%
  {\langle #1, #2 \rangle}%
  {\langle #1, #2 \rangle}%
}
\begin{document}

\title[An unoriented skein relation via bordered--sutured Floer homology]{An 
  unoriented skein relation \\ via bordered--sutured Floer homology}

\author[David Shea Vela-Vick]{David Shea Vela-Vick}
\address{Department of Mathematics \\ Louisiana State University \\ Baton Rouge, LA 70803}
\email{\href{mailto:shea@math.lsu.edu}{shea@math.lsu.edu}}
\urladdr{\url{http://www.math.lsu.edu/~shea/}}

\author[C.-M. Michael Wong]{C.-M. Michael Wong}
\address{Department of Mathematics \\ Louisiana State University \\ Baton Rouge, LA 70803}
\email{\href{mailto:cmmwong@lsu.edu}{cmmwong@lsu.edu}}
\urladdr{\url{http://www.math.lsu.edu/~cmmwong/}}

\thanks{DSV was partially supported by NSF Grant DMS-1249708 and Simons Foundation Grant 524876.}

\keywords{Tangles, knot Floer homology, bordered Floer homology, skein 
  relation}
\subjclass[2010]{57M27; 57M25, 57R58}

%%%%%%%%%%%%%%%%%%%%%%%%%%%%%%%%%%%%%%%%%%%%%%%%%%%%%%%

\begin{abstract}

  We show that the bordered--sutured Floer invariant of the complement of a 
  tangle in an arbitrary $3$-manifold $Y$, with minimal conditions on the 
  bordered--sutured structure, satisfies an unoriented skein exact triangle.  
  This generalizes a theorem by Manolescu \cite{Man07:HFKUnorientedSkein} for 
  links in $S^3$. We give a theoretical proof of this result by adapting 
  holomorphic polygon counts to the bordered--sutured setting, and also give a 
  combinatorial description of all maps involved and explicitly compute them.  
  We then show that, for $Y = S^3$, our exact triangle coincides with 
  Manolescu's. Finally, we provide a graded version of our result, explaining 
  in detail the grading reduction process involved.

\end{abstract}

%%%%%%%%%%%%%%%%%%%%%%%%%%%%%%%%%%%%%%%%%%%%%%%%%%%%%%%

\maketitle

%%%%%%%%%%%%%%%%%%%%%%%%%%%%%%%%%%%%%%%%%%%%%%%%%%%%%%%

%%%%%%%%%%%%%%%%%%%%%%%%%%%%%%%%%%%%%%%%%%%%%%%%%%%%%%%
\section{Introduction}
\label{sec:introduction}
%%%%%%%%%%%%%%%%%%%%%%%%%%%%%%%%%%%%%%%%%%%%%%%%%%%%%%%

Knot Floer homology, defined by Ozsv\'ath and Szab\'o \cite{OzsSza04:HFK}, and 
independently by Rasmussen \cite{Ras03:HFK}, is an invariant of oriented, 
null-homologous knots $K$ in an oriented, connected, closed $3$-manifold $Y$.  
It is then extended by Ozsv\'ath and Szab\'o \cite{OzsSza08:HFL} to an 
invariant for oriented links $L$.  The simplest flavor $\HFKh (Y, L)$ is a 
bigraded module over $\F{2} = \cycgrp{2}$ or $\Z$, which categorifies the 
Alexander polynomial of $L$ when $Y = S^3$; for this reason, it is often 
compared to Khovanov homology \cite{Kho00:Kh}, a link invariant $\Kht (L)$ that 
categorifies the Jones polynomial. Knot Floer homology is known to detect the 
fiberedness \cite{Ni07:HFKFibered} and the genus 
\cite{OzsSza04:HFThurstonNormHFKGenus} of a knot, and has given rise to a 
number of concordance invariants \cite{OzsSza03:HFKg4, Hom14:Epsilon, 
  OzsStiSza17:Upsilon}.  Among other applications, it has also led to 
invariants of Legendrian and transverse links in a contact $3$-manifold 
\cite{NgOzsThu08:GRIDEffective, LisOzsSti09:LOSS}.

It is well known that the Alexander and Jones polynomials satisfy skein 
relations, which have played a very important role in our understanding of 
these invariants.  In particular, both polynomials satisfy an oriented skein 
relation that relates two links that differ by a crossing and their common 
oriented resolution, while only the Jones polynomial satisfies an unoriented 
skein relation that relates a link with its two resolutions at a crossing.  
There are categorified statements of some of these skein relations, in the form 
of long exact sequences: Knot Floer homology satisfies an oriented skein exact 
triangle \cite{OzsSza04:HFK}, while Khovanov homology satisfies an unoriented 
one \cite{Kho00:Kh, Vir04:KhUnorientedSkein, Ras05:HFKKhExpo}.

Curiously, while the Alexander polynomial does not satisfy an unoriented skein 
relation, Manolescu \cite{Man07:HFKUnorientedSkein} shows that $\HFKh$ does 
satisfy an unoriented skein exact triangle over $\F{2}$, when $Y = S^3$. This 
is extended by the second author \cite{Won17:GHUnorientedSkein} to 
$\Z$-coefficients, using grid homology, a combinatorial version of link Floer 
homology.

This exact sequence has a number of consequences: First, quasi-alternating 
links are a large class of links in $S^3$ that include all alternating links. 
Manolescu's result immediately implies $\rk \HFKh (S^3, L) = 2^{\ell - 1} \det 
(L)$ for a quasi-alternating link $L$ with $\ell$ components, explaining a rank 
equality between $\HFKh$ and $\Kht$ for many small links in $S^3$. By 
calculating the shifts in the $\delta$-grading (obtained by diagonally 
collapsing the bigrading) in the exact sequence, Manolescu and Ozsv\'ath 
\cite{ManOzs08:QALinks} show that a quasi-alternating link $L$ is $\sigma$-thin 
over $\F{2}$, meaning that $\HFKh (S^3, L)$ is supported only in 
$\delta$-grading $\sigma (L) / 2$; this in turn implies that $\HFKh (S^3, L)$ 
is completely determined by the signature and the Alexander polynomial of $L$.  
By the second author's work on grid homology, this fact is also true over $\Z$.

Second, the exact triangle is iterated by Baldwin and Levine 
\cite{BalLev12:HFKUnorientedSkeinIterate} to obtain a cube-of-resolutions 
complex, giving a combinatorial description of link Floer homology that is 
distinct from grid homology.

Third, using the grid homology version of the exact triangle, Lambert-Cole 
\cite{Lam17:GHMutation} computes the rank of the maps involved, and uses this 
computation to prove that $\HFKh$ remains invariant under mutation by a large 
class of tangles. This mutation invariance applies to the Kinoshita--Terasaka 
and Conway families, as well as all mutant knots with crossing number at most 
$12$, giving a partial answer to the Mutation Invariance Conjecture 
\cite{BalLev12:HFKUnorientedSkeinIterate}.

The goal of the present paper is to generalize Manolescu's result to obtain an 
exact triangle for an unoriented skein triple of tangles in a $3$-manifold with 
boundary, in an appropriate sense. There are currently several theories related 
to Heegaard Floer homology that provide a suitable gluing theorem; in this 
paper, we focus on one such theory, bordered--sutured Floer homology, defined 
by Zarev \cite{Zar11:BSFH}.  Petkova and the second author 
\cite{PetWon18:TFHSkein} have proven a similar result for tangle Floer 
homology, a combinatorial tangle invariant defined by Petkova and V\'ertesi 
\cite{PetVer16:TFH}, similar to grid homology, for tangles in $S^2 \cross [0, 
1]$; meanwhile, Zibrowius \cite{Zib17:Dissertation} has given an alternative 
proof of Manolescu's result for links in $S^3$ using peculiar modules, which 
are invariants of tangles in $B^3$.  The exact triangle in the present paper 
applies to tangles in any $3$-manifold.

Sutured Floer homology, defined by Juh\'asz \cite{Juh06:SFH}, is a variant of 
Heegaard Floer homology \cite{OzsSza04:HF} for sutured manifolds. It recovers 
the hat-version of link Floer homology via the isomorphism
\[
  \SFH (Y \setminus \nbhd{L}, \sut_{\mu}) \isom \HFKh (Y, L),
\]
where $\sut_{\mu}$ denotes a pair of oppositely oriented, meridional sutures 
for each link component, at the expense of breaking a uniform homological 
grading into a grading for each $\SpinC$-summand. (For this reason, this will 
be our view of the gradings on $\HFKh$ for the rest of the paper.) This 
isomorphism also gives a definition of $\HFKh$ for links that are not 
rationally null-homologous.  Inspired by the bordered Heegaard Floer theory 
developed by Lipshitz, Ozsv\'ath, and Thurston \cite{LipOzsThu18:BFH, 
  LipOzsThu15:BFHBimodules}, Zarev \cite{Zar11:BSFH} defines an invariant of 
manifolds whose boundaries are partly sutured and partly bordered: One may then 
glue two manifolds together by identifying the bordered parts of their 
boundaries, and use a pairing theorem to calculate the invariant associated to 
the glued manifold. Since this theory is currently not defined over $\Z$, we 
shall work over $\F{2} = \cycgrp{2}$ throughout the paper.

For $k \in \set{\infty, 0, 1}$, let $\elTan_k = (\closure{B^3}, \elT_k)$ be the 
tangles shown in \fullref{fig:tangles}, which we shall refer to as 
\emph{elementary tangles}. Here, $\bdy \elT_k \subset \bdy \closure{B^3}$ are 
identical $0$-manifolds for $k \in \set{\infty, 0, 1}$.

%%%%%%%%%%%%%%%%%%%%%%%%%%%%%%%%%%%%%%%%%%%%%%%%%%%%%%%
\begin{figure}[!htbp]
  \captionsetup{aboveskip={\dimexpr10pt+2pt+\Lactualfontsize\relax}}
  \labellist
  \Large\hair 2pt
  \pinlabel $\elTan_1$ [t] at 38 0
  \pinlabel $\elTan_\infty$ [t] at 145 0
  \pinlabel {$\elTan_0$} [t] at 252 0
  \endlabellist
  \includegraphics{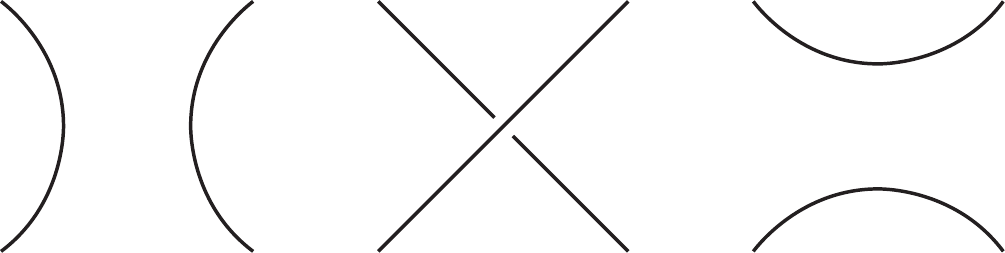}
  \caption{The elementary tangle $\elTan_\infty$ and its resolutions $\elTan_0$ 
    and $\elTan_1$.}
  \label{fig:tangles}
\end{figure}
%%%%%%%%%%%%%%%%%%%%%%%%%%%%%%%%%%%%%%%%%%%%%%%%%%%%%%%

Consider now a triple of more general tangles $\Tan_k = (Y, T_k)$, where
$Y$ is a compact, oriented $3$-manifold (possibly) with boundary and each 
$(T_k, \bdy T_k) \subset (Y, \bdy Y)$ is a smoothly embedded compact 
$1$-dimensional submanifold, such that
\begin{itemize}
  \item There is an embedded $\closure{B^3}$ in the interior of $Y$ in which 
    $T_k$ coincides with $\elT_k$; and
  \item For $k \in \set{\infty, 0, 1}$, the tangles $T_k$ are identical outside 
    $\closure{B^3}$.
\end{itemize}
Suppose each tangle complement $Y_k = Y \setminus \nbhd{T_k}$ is equipped with 
a bordered--sutured structure $(\sut_k, \arcdiag_1 \union \arcdiag_2, \phi_k)$, 
such that
\begin{itemize}
  \item The sutures in $\sut_k$ do not intersect $\closure{B^3}$, and are 
    identical
    for $k \in \set{\infty, 0, 1}$;
  \item The intersection of $\bdy Y_k$ and $\closure{B^3}$ belongs to the 
    positive region $R_+ (\sut)$; and
  \item The bordered boundary, i.e.\ the image of the parametrization $\phi_k 
    \colon G (\arcdiag_1 \union \arcdiag_2) \into \bdy Y_k$, does not intersect 
    $\closure{B^3}$, and the parametrization maps are identical
    for $k \in \set{\infty, 0, 1}$.
\end{itemize}
Note that either of $\arcdiag_1$ and $\arcdiag_2$ may be empty or not 
connected.  To each bordered--sutured manifold $\Y_k = (Y_k, \sut_k, \arcdiag_1 
\union \arcdiag_2, \phi_k)$, then, Zarev \cite{Zar11:BSFH} associates a 
type~$\DA$ bimodule
\[
  \itensor[^{\A (-\arcdiag_1)}]{\BSDAh (\Yk)}{_{\A (\arcdiag_2)}}.
\]
over $\A (-\arcdiag_1)$ and $\A (\arcdiag_2)$. A similar construction is used 
to obtain a bordered--sutured manifold $\Y_\sut (\Tan)$ by Alishahi and 
Lipshitz \cite{AliLip17:BFHCompressingDisks}, who prove that $\BSDh (\Y_\sut 
(\Tan))$ detects (partly) boundary parallel tangles. Our hypothesis differs 
from their construction in that we do not allow longitudinal sutures that 
intersect $\closure{B^3}$, but we do allow non--null-homologous $\Tan$ and 
place no restrictions on the sutures or the bordered boundaries outside 
$\closure{B^3}$.

\begin{maintheorem}
  \label{thm:main}
  Let $\Yk$ be as described above. There exist type~$\DA$ homomorphisms $F_k 
  \colon \BSDAh (\Yk) \to \BSDAh (\Ynext)$ such that
  \[
    \BSDAh (\Yk) \homeq \Cone (F_{k+1} \colon \BSDAh (\Ynext) \to \BSDAh 
    (\Yprev))
  \]
  as type~$\DA$ structures.  Moreover, the homomorphisms $F_k$ and the homotopy 
  equivalence above respect the relative gradings on the bimodules in a sense 
  to be made precise in \fullref{sec:gradings}; see \fullref{thm:gradings}.
\end{maintheorem}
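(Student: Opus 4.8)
The plan is to mirror the proof of the surgery exact triangle in Heegaard Floer homology, in the form adapted by Manolescu for $\HFKh$, but carried out with holomorphic polygon counts for bordered--sutured Heegaard diagrams. First I would fix a bordered--sutured Heegaard diagram $\HD = (\Sigma, \alphas, \betas, \arcdiag_1 \cup \arcdiag_2, \dotsc)$ for $\Y_\infty$ in which the ball $\closure{B^3}$ is carried by an embedded disk $D \subset \Sigma$ disjoint from $\alphas$, from the arcs parametrizing the bordered boundary, and from all basepoints outside $D$, and in which $\betas$ restricts on $D$ to a standard local model for $\elT_\infty$ (the bordered--sutured counterpart of Manolescu's local picture at a crossing). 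Replacing $\betas = \betas^\infty$ by the local models $\betas^0$ and $\betas^1$, which differ from $\betas^\infty$ only inside $D$, yields diagrams for $\Y_0$ and $\Y_1$; the three collections $\betas^k$ (with $k \in \set{\infty, 0, 1}$ read cyclically) sit in the standard cyclic ``exact-triangle'' configuration, so that for each $k$ the Floer homology of $(\betas^k, \betas^{k+1})$ carries a distinguished cycle $\Theta^{k, k+1}$, and one defines the candidate maps $F_k \colon \BSDAh(\Yk) \to \BSDAh(\Ynext)$ by counting holomorphic triangles in $(\Sigma, \alphas, \betas^k, \betas^{k+1})$ with $\Theta^{k, k+1}$ inserted at one vertex.

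The technical core is to develop enough holomorphic polygon theory for bordered--sutured diagrams to make sense of these maps and to establish their formal properties. Extending Zarev's treatment of bigons and the bordered polygon theory of Lipshitz--Ozsv\'ath--Thurston, I would introduce moduli spaces of holomorphic triangles, quadrilaterals, and pentagons carrying punctures on the $\alpha$-arc portions of the boundary labelled by algebra elements, prove the expected transversality, compactness, and gluing statements, and read off from the codimension-one degenerations: (i) that each $F_k$ is a type~$\DA$ homomorphism; (ii) the associativity relations linking the triangle maps to the quadrilateral homotopies; and (iii) that the composites $F_{k+1} \circ F_k$ are null-homotopic, via quadrilateral counts together with the vanishing, in the local model, of the triangle product of $\Theta^{k, k+1}$ and $\Theta^{k+1, k+2}$. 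Alongside this I would record one local computation: inside $D$, one of the triangle maps relating the local collections $\betas^k|_D$ is an isomorphism onto a direct summand, proven by exhibiting the relevant holomorphic triangles explicitly. Since the sutures and the bordered boundary are disjoint from $\closure{B^3}$ by hypothesis, this is a genuinely local statement, and a neck-stretching argument identifies the global triangle and higher-polygon maps, near $D$, with their local counterparts.

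Given these ingredients, the null-homotopies of consecutive composites assemble, exactly as in the closed case, into a type~$\DA$ homomorphism $\Psi \colon \Cone\bigl(F_{k+1} \colon \BSDAh(\Ynext) \to \BSDAh(\Yprev)\bigr) \to \BSDAh(\Yk)$, and it remains to show $\Psi$ is a homotopy equivalence of type~$\DA$ structures. Here I would argue as in Ozsv\'ath--Szab\'o and Manolescu: an algebraic lemma on iterated mapping cones reduces the claim to verifying that a particular composite of triangle, quadrilateral, and pentagon maps is homotopic to the identity (equivalently, to a filtration argument on the iterated cone), which is supplied by the local summand-isomorphism above together with the vanishing in (iii); the same data then furnish an explicit homotopy inverse. (Alternatively, since the bordered--sutured algebras are finite-dimensional over $\F{2}$ and the bimodules of finite rank, it would be enough to know $\Psi$ induces an isomorphism on homology, as one may then pass to a reduced model and apply homological perturbation.) I expect the principal obstacle to be not any one of these steps but the careful construction of the bordered--sutured polygon moduli spaces themselves --- in particular the analysis at the boundary punctures and the attendant compactness and gluing results --- since once that machinery is in place the rest follows formally from it and the local model calculation.

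Finally, the grading statement is deferred: in \fullref{sec:gradings} I would track the relative grading shifts of all the polygon maps through the argument above and perform the grading-set reduction required to state, and then verify, that $\Psi$ and the $F_k$ respect the gradings, in the precise sense of \fullref{thm:gradings}.
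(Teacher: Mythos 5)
Your outline is correct in principle, but it is not the route the paper takes --- it is essentially the ``direct generalization of Manolescu's construction'' that the introduction explicitly contrasts with, and that the discussion following \fullref{thm:comm_spec} shows would produce the same maps. The paper instead localizes completely: it works only with the three elementary tangle complements $\Bk$, whose type~$D$ modules $\BSDh (\Bk)$ over $\AZfive$ have at most three generators and are computed explicitly from the small diagrams $\HD_k$. The morphisms $f_k$ and the null-homotopies $\vphi_k$ are defined by triangle and quadrilateral counts in the single small multi-diagram $\HDabH$ (and are then recomputed combinatorially in \fullref{sec:skein_via_computation}), the hypotheses of \fullref{lem:hom_alg} are verified there, and the global statement is obtained purely formally: one writes $\Yk = \Y' \glue{\sutF (\arcdiag)} \Bk$ with $Y' = Y_k \setminus B^3$, sets $F_k = \Id_{\BSDAAh (\Y')} \boxtensor f_k$, and invokes Zarev's pairing theorem together with the fact that box-tensoring with the identity commutes with mapping cones. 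What your approach buys is a direct description of $F_k$ as a global polygon count; what it costs is exactly the machinery you flag as the obstacle --- global bordered--sutured polygon moduli spaces plus a neck-stretching/localization argument reducing the $\Ainf$-relations and the top-generator computation to the disk $D$ --- together with the need to produce, for arbitrary $(Y, T_\infty)$ and arbitrary sutures and bordered boundary, a Heegaard diagram in the required special form. The pairing theorem replaces all of that: the only holomorphic analysis occurs on a fixed compact diagram where the contributing domains can be enumerated by hand. (The paper does develop bordered--sutured polygon counting and a pairing theorem for triangles, but only in \fullref{sec:comparison}, precisely in order to identify its maps with the global triangle counts and hence with Manolescu's maps when $Y = S^3$.) One small caution if you pursue your route: the distinguished cycle $\Theta^{k, k+1}$ here is the \emph{sum} of the two intersection points of $\beta^k$ and $\beta^{k+1}$ in the local model, not a single top-graded generator, and the key local identity is that the triple product of these sums yields the top generator of the Hamiltonian-translate complex; your phrasing should be adjusted accordingly.
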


Because of the more complicated nature of the gradings, we generally defer 
their discussions to \fullref{sec:gradings}.

The strategy to prove \fullref{thm:main} is to consider bordered--sutured 
manifolds $\Bk$ corresponding to the complements of the elementary tangles 
$\elTan_k$, first defining homomorphisms $f_k \colon \BSDh (\Bk) \to \BSDh 
(\Bnext)$, and then using the pairing theorem \cite[Theorem~8.5.1]{Zar11:BSFH} 
to extend the result to our setting. In fact, it is evident from our strategy 
that the skein relation above holds for more general bordered--sutured manifolds 
$\Yk$ and not just tangle complements; for example, one may derive from 
\fullref{thm:main} a skein relation for (a generalization of) graph Floer 
homology, defined by Harvey and O'Donnol \cite{HarODo17:HFG}, and independently 
by Bao \cite{Bao17:HFG}, for certain kinds of spatial graphs. The statement of 
\fullref{thm:main} is chosen to emphasize our interest in the application of 
bordered--sutured Floer theory to the study of knots, links, and tangles. See 
\fullref{thm:general} for the more general statement.

Our strategy illustrates and harnesses the power of a bordered theory, which 
allows one to use cut-and-paste techniques to prove general results by working 
locally.  For example, in a similar spirit, Lipshitz, Ozsv\'ath, and Thurston 
\cite{LipOzsThu14:BFHBranchedDoubleSSI, LipOzsThu16:BFHBranchedDoubleSSII} have 
used bordered Floer homology to compute explicitly the spectral sequence from 
Khovanov homology of a link to the Heegaard Floer homology of its branched 
double cover \cite{OzsSza05:HFBranchedDoubleSS}.

In particular, our strategy offers an advantage over the proofs in 
\cite{Man07:HFKUnorientedSkein, Won17:GHUnorientedSkein, PetWon18:TFHSkein}, as 
follows. In Manolescu's original paper \cite{Man07:HFKUnorientedSkein}, the 
maps involved in the exact sequence are defined by counting holomorphic 
polygons in a high-dimensional manifold, which are of theoretical interest but 
not practically computable.  In contrast, the maps in 
\cite{Won17:GHUnorientedSkein} are combinatorially defined, which is a feature 
specifically exploited in the application to mutation invariance 
\cite{Lam17:GHMutation}; the maps in \cite{PetWon18:TFHSkein} are likewise 
combinatorial. However, while we believe the maps in 
\cite{Won17:GHUnorientedSkein, PetWon18:TFHSkein} coincide with holomorphic 
polygon counts, it is not immediately obvious that they are so; moreover, the 
complexes involved there are fairly large.  The present paper unites the two 
approaches: We will first define $f_k$ by counts of holomorphic triangles in 
\fullref{sec:skein_via_polygon}, and then explicitly compute them 
combinatorially in \fullref{sec:skein_via_computation}, taking advantage of the 
fact that $\BSDh (\Bk)$ have at most three generators with our choice of 
bordered--sutured Heegaard diagrams $\HD_k$. The reader is encouraged to take a 
cursory look at \fullref{fig:comm_diag_f} and \fullref{fig:comm_diag_phi}, 
which contain the entire computation.

A special case of \fullref{thm:main} is when $Y$ is a compact $3$-manifold 
\emph{without} boundary, with $\arcdiag_1 = \arcdiag_2 = \eset$.
In this case, each $T_k$ is an embedded $1$-dimensional submanifold without 
boundary, and is hence a link in $Y$; we thus denote it more conventionally by 
$L_k$. Let $\ell_k$ be the number of components of $L_k$, and set $m = \max 
\set{\ell_\infty, \ell_0, \ell_1}$.

\begin{maincorollary}
  \label{cor:arbitrary}
  There exists an exact triangle
  \[
    \dotsb \xrightarrow{(F_1)_*} \HFKh (Y, L_\infty) \tensor V^{m - 
      \ell_\infty} \xrightarrow{(F_\infty)_*} \HFKh (Y, L_0) \tensor V^{m - 
      \ell_0} \xrightarrow{(F_0)_*} \HFKh (Y, L_1) \tensor V^{m - \ell_1}
    \xrightarrow{(F_1)_*} \dotsb,
  \]
  where $V$ is a vector space of dimension $2$. Moreover, the maps involved 
  respect the relative gradings on the homology groups in a sense to be made 
  precise in \fullref{sec:gradings}; see \fullref{thm:gradings}.
\end{maincorollary}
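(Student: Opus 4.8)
The plan is to specialize \fullref{thm:main} to the closed case and extract a long exact sequence from the mapping cone. Setting $\arcdiag_1 = \arcdiag_2 = \eset$, the algebras $\A(-\arcdiag_1)$ and $\A(\arcdiag_2)$ are the ground ring $\F{2}$, so the type~$\DA$ bimodule $\BSDAh(\Yk)$ carries no nontrivial higher structure maps and is simply a chain complex over $\F{2}$; by Zarev's construction \cite{Zar11:BSFH} it is chain homotopy equivalent to the sutured Floer complex of $(Y_k, \sut_k)$, whose homology is $\SFH(Y_k, \sut_k)$. Since a homotopy equivalence of type~$\DA$ structures over $\F{2}$ is an ordinary chain homotopy equivalence, \fullref{thm:main} provides chain maps $F_k \colon \BSDAh(\Yk) \to \BSDAh(\Ynext)$ together with chain homotopy equivalences $\BSDAh(\Yk) \homeq \Cone(F_{k+1} \colon \BSDAh(\Ynext) \to \BSDAh(\Yprev))$, with $k$ running cyclically over $\set{\infty, 0, 1}$. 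Taking homology of these three mapping cones, and using the cyclic symmetry of the cone presentations to identify each connecting homomorphism with the appropriate $(F_k)_*$, yields an exact triangle
\[
  \dotsb \xrightarrow{(F_k)_*} \SFH(Y_{k+1}, \sut_{k+1}) \xrightarrow{(F_{k+1})_*} \SFH(Y_{k+2}, \sut_{k+2}) \xrightarrow{(F_{k+2})_*} \SFH(Y_k, \sut_k) \xrightarrow{(F_k)_*} \dotsb .
\]

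It then remains to choose the sutures $\sut_k$ so that $\SFH(Y_k, \sut_k) \isom \HFKh(Y, L_k) \tensor V^{m - \ell_k}$, and this is the step I expect to require the only real work. Outside $\closure{B^3}$ the three tangles coincide with a fixed $1$-submanifold $T$ of $Y \setminus \closure{B^3}$ whose boundary is the four points $\bdy \elT_k \subset \bdy \closure{B^3}$, so $T$ is the union of two arcs together with some number $c$ of closed components; correspondingly each link $L_k$ has $\ell_k \in \set{1 + c, 2 + c}$ components, and $m = 2 + c$. I would place one pair of oppositely oriented meridional sutures on each of the two arcs of $T$ and one pair on each of its $c$ closed components — $m$ pairs in all, disjoint from $\closure{B^3}$ and identical for the three tangles — and declare the annular pieces $\bdy \nbhd{T_k} \cap \closure{B^3}$ to lie in $R_+(\sut_k)$, arranging the sutures near $\bdy \closure{B^3}$ so that the adjacent regions on the outside are also positive. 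This is exactly the data required by \fullref{thm:main} (the bordered boundary is empty, hence disjoint from $\closure{B^3}$; the sutures are identical for the three $k$ and disjoint from $\closure{B^3}$; and $\bdy Y_k \cap \closure{B^3} \subset R_+$), and by construction every component of every $L_k$ meets at least one of the $m$ suture pairs.

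With this choice, $(Y_k, \sut_k)$ is the complement $Y \setminus \nbhd{L_k}$ equipped with one meridional suture pair on each of its $\ell_k$ components, together with $m - \ell_k$ additional meridional pairs; hence, by the stabilization invariance of sutured Floer homology — refining the isomorphism $\SFH(Y \setminus \nbhd{L}, \sut_\mu) \isom \HFKh(Y, L)$ recalled in \fullref{sec:introduction}, with each extra meridional pair contributing a tensor factor of the $2$-dimensional space $V$ — we obtain $\SFH(Y_k, \sut_k) \isom \HFKh(Y, L_k) \tensor V^{m - \ell_k}$. Substituting this into the exact triangle above and letting $(F_k)_*$ denote the transported maps proves the corollary, and the refinement respecting the relative gradings is inherited from the corresponding statement of \fullref{thm:main} (see \fullref{thm:gradings}) once the relative grading on each $\SFH(Y_k, \sut_k)$ is matched with the $\SpinC$-refined grading on $\HFKh(Y, L_k) \tensor V^{m - \ell_k}$. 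As indicated, the principal obstacle is the suture bookkeeping of the second paragraph — making the sutures genuinely identical outside $\closure{B^3}$ while the stabilized complements still compute $\HFKh$ — rather than anything in the homological algebra, which is formal.
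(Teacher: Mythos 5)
Your proposal is correct and follows essentially the same route as the paper: specialize to $\arcdiag_1 = \arcdiag_2 = \eset$ so that $\BSDAh(\Yk)$ reduces to $\SFC(Y_k,\sut_k)$, choose one meridional suture pair per arc of the tangle outside $\closure{B^3}$ (equivalently, push-offs of the meridians of the four points of $\bdy\elT_k$) plus one pair per remaining component, identify $\SFH(Y_k,\sut_k)\isom\HFKh(Y,L_k)\tensor V^{m-\ell_k}$ by the standard stabilization result, and take the long exact sequence of the mapping cone. The paper's proof does exactly this, citing \cite[Proposition~9.2]{Juh06:SFH} and an extension of \cite[Proposition~2.3]{ManOzsSar09:GH} for the stabilization step.
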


The reader familiar with gradings in sutured Floer homology may find it useful 
to examine \fullref{eg:iden_fibers_2} for an illustration of the graded version 
of \fullref{cor:arbitrary}.

The exact triangle in \cite{Man07:HFKUnorientedSkein} is the special case $Y = 
S^3$. In \cite{Man07:HFKUnorientedSkein}, Manolescu constructs, from a given 
connected link projection of $L_\infty \subset S^3$ and a choice of edges in 
the projection, a \emph{special Heegaard diagram} $\HDMan_\infty$ for 
$L_\infty$, modifies $\HDMan_\infty$ locally to obtain Heegaard diagrams 
$\HDMan_0$ and $\HDMan_1$ for $L_0$ and $L_1$, and uses these Heegaard diagrams 
to prove the skein exact triangle.  Here, we have added a superscript to 
distinguish these Heegaard diagrams defined in \cite{Man07:HFKUnorientedSkein} 
from those constructed in the present article; likewise, we denote by $\FMank$ 
the maps involved in the skein exact triangle, which are denoted $f_k$ in 
\cite{Man07:HFKUnorientedSkein}.  

Note that one might be able to prove \fullref{cor:arbitrary} by directly 
generalizing Manolescu's construction; however, that would require constructing 
Heegaard diagrams of a given form, for all $(Y, L_\infty)$, which could be 
somewhat cumbersome before the advent of bordered--sutured Floer theory. (See, 
for contrast, the discussion after \fullref{thm:comm_spec}.)

\begin{maintheorem}
  \label{thm:agree}
  For $Y = S^3$, the exact triangle in \fullref{cor:arbitrary} agrees with the 
  exact triangle in \cite{Man07:HFKUnorientedSkein}. See \fullref{thm:agree_2} 
  for the precise statement.
\end{maintheorem}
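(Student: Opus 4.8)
The plan is to recognize Manolescu's construction in \cite{Man07:HFKUnorientedSkein} as precisely the $Y = S^3$ instance of ours, obtained by gluing the elementary pieces $\Bk$ to a single fixed bordered--sutured manifold, and then to identify the maps. Recall that, from a connected projection of $L_\infty \subset S^3$ together with her choice of edges, the special Heegaard diagram $\HDMank$ of \cite{Man07:HFKUnorientedSkein} is assembled from a standard local model inside a ball $\closure{B^3}$ around the distinguished crossing --- altered in a prescribed way for $k = 0, 1$ --- together with a piece determined by the rest of the projection, which is independent of $k$. First I would choose a circle $C$ on the Heegaard surface of $\HDMank$ separating the crossing region from the rest, meeting the $\alphas$-curves in the pattern dictated by the arc diagrams of the bordered surface $\bdy \closure{B^3} \setminus \nbhd{\bdy \elT_k}$, and thereby exhibit $\HDMank$ as a gluing $\HDMank = \HD_k' \union \HD^{\mathrm{glob}}$, where $\HD^{\mathrm{glob}}$ is a fixed bordered--sutured Heegaard diagram for the complement in $S^3 \setminus \closure{B^3}$ of the part of $L_k$ lying outside $\closure{B^3}$, and $\HD_k'$ is a bordered--sutured Heegaard diagram for $\Bk$. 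Identifying $\HD_k'$ with the (at most) three-generator diagram $\HD_k$ of \fullref{sec:skein_via_computation} may require a short sequence of bordered--sutured Heegaard moves, performed compatibly for $k \in \set{\infty, 0, 1}$.

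Granting this, Zarev's pairing theorem \cite[Theorem~8.5.1]{Zar11:BSFH} gives $\SFC(\HDMank) \homeq \BSAh(\HD^{\mathrm{glob}}) \boxtensor \BSDh(\Bk)$; under the identification $\SFH(S^3 \setminus \nbhd{L_k}, \sut_\mu) \isom \HFKh(S^3, L_k)$, and bearing in mind that the number $m - \ell_k$ of auxiliary $V$-factors is a topological quantity, hence the same in both constructions (Manolescu's (de)stabilizations being arrangeable within $\HD^{\mathrm{glob}}$), this is exactly the homotopy equivalence underlying \fullref{cor:arbitrary} for $Y = S^3$. Thus the three groups appearing in the two exact triangles are canonically identified.

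The crux is matching the maps. Manolescu's map $\FMank$ is a count of holomorphic polygons in the Heegaard multi-diagram whose $\alphas$-curves are those of $\HDMank$ and whose two families of $\betas$-curves are those of $\HDMank$ and $\HDMannext$ (after the (de)stabilizations equalizing component counts). Since these two families of $\betas$-curves differ only inside $\closure{B^3}$, this Heegaard triple decomposes along $C$ into the bordered triple that defines $f_k$ in \fullref{sec:skein_via_polygon}, glued to the fixed piece $\HD^{\mathrm{glob}}$. Running the neck-stretching argument of \cite{Zar11:BSFH} across $C$ --- now with one extra family of curves, i.e.\ in its holomorphic-triangle form --- identifies the polygon count $\FMank$ with $f_k \boxtensor \id_{\BSAh(\HD^{\mathrm{glob}})}$, which is by construction the map $F_k$ of \fullref{thm:main}; here one may either establish this triangle-map pairing in general or, exploiting that $\HD_k$ has at most three generators, analyze the relevant degenerations directly. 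Tracking the Heegaard moves of the first step through this identification, compatibly in $k$, then produces the commutative ladder between the two exact triangles asserted in \fullref{thm:agree_2}, with vertical maps the canonical isomorphisms on $\HFKh$.

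I expect the main obstacle to be the first and third steps in combination: Manolescu's special Heegaard diagrams were not built as gluings, so realizing $\HDMank$ as $\HD_k' \union \HD^{\mathrm{glob}}$ along a separating circle compatible with the induced arc diagrams, and then controlling the effect on the triangle maps of the moves carrying $\HD_k'$ to $\HD_k$, is where the real work lies. A secondary point requiring care is verifying that Manolescu's destabilizations and extra basepoints interact correctly with the bordered region --- that they can be localized inside $\HD^{\mathrm{glob}}$ --- so that the decomposition of the Heegaard triple along $C$ is genuinely of the form to which the pairing argument applies.
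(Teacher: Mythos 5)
Your overall route is the paper's: split Manolescu's special diagram into a crossing piece and a $k$-independent global piece, and identify $\FMank$ with $\Id \boxtimes f_k$ via a pairing theorem for holomorphic triangles adapted from Lipshitz--Ozsv\'ath--Thurston (this is exactly \fullref{prop:comm_gen} and \fullref{thm:comm_spec} in the paper). But the step you correctly flag as ``where the real work lies'' --- carrying an arbitrary local piece $\HD_k'$ to $\HD_k$ by bordered--sutured Heegaard moves and then tracking those moves through the triangle counts --- is a genuine gap as proposed: there is no invariance statement for the maps $\delta_2(\Eta\tensor\blank)$ under Heegaard moves available off the shelf here, and proving one would be a substantial additional project. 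The paper never does this. Instead it exploits the freedom in \fullref{thm:agree_2} (an existence statement over projections and edge choices): one first applies Reidemeister moves so that exactly one region adjacent to the distinguished crossing $v$ is unbounded, then takes the distinguished edge $e$ and three of the $s_i$ to be precisely the four edges adjacent to $v$. With that choice the local piece $\piece_v'$ of the special Heegaard diagram, after a small isotopy, \emph{coincides} with $\HD_\infty$ of \fullref{fig:hd_infty} --- no Heegaard moves needed --- and \fullref{thm:comm_spec} applies verbatim.

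Two further points your proposal glosses over, both of which the paper must address. First, forcing $e, s_1, s_2, s_3$ to sit at the crossing makes $\abs{\set{s_i}} = m$ rather than $m-1$, so the resulting diagram is once quasi-stabilized relative to Manolescu's; this is the source of the explicit $\tensor V$ in the ladder of \fullref{thm:agree_2}, and it cannot simply be ``arranged within $\HD^{\mathrm{glob}}$'' independently of the local identification. Second, there is an orientation-convention mismatch (\fullref{rmk:dir_arrows}): the paper's convention reverses the roles of the handlebodies, so Manolescu's piece $\piece_v$ must be replaced by $\piece_v'$ (the opposite crossing), and the arrows of the triangle point the other way. Neither issue is fatal, but both must be built into the decomposition before the pairing argument is invoked.
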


In \cite{ManOzs08:QALinks} (and in \cite{Won17:GHUnorientedSkein, 
  PetWon18:TFHSkein}), the exact triangles are equipped with an absolute 
$\delta$-grading in $\Z$.  While we expect the grading in 
\fullref{cor:arbitrary} to be related to this $\delta$-grading when applied to 
$Y = S^3$, we do not prove this in this paper.  See, however, 
\fullref{rmk:gr_01}.

\fullref{thm:agree} is inspired by \cite{LipOzsThu16:BFHBranchedDoubleSSII}, in 
which the theory of bordered polygon counts is developed to prove that the 
exact triangle, and in fact the spectral sequence, constructed in 
\cite{LipOzsThu14:BFHBranchedDoubleSSI} using bordered Floer homology, agree 
with the original exact triangle and spectral sequence constructed directly 
using Heegaard Floer homology \cite{OzsSza05:HFBranchedDoubleSS}.

\subsection*{Organization}
The rest of the paper is organized as follows. In \fullref{sec:preliminaries}, 
we fix some notation and provide a lemma in homological algebra that will be 
useful in later sections.  Then, in \fullref{sec:setup}, we describe the 
bordered boundary and the bordered--sutured manifolds $\Bk$, which correspond 
to the complements of the elementary tangles $\elT_k$, and explicitly compute 
the algebra and modules associated to these objects. Stating and assuming the 
specialization of \fullref{thm:main} to $\Bk$, we complete the general proof of 
\fullref{thm:main} and \fullref{cor:arbitrary}. We provide two proofs of the 
specialization. We first give a theoretical proof by holomorphic polygon counts 
in \fullref{sec:skein_via_polygon}, in the process adapting the technique of 
polygon counts from \cite{LipOzsThu16:BFHBranchedDoubleSSII} to the 
bordered--sutured setting. Then, in \fullref{sec:skein_via_computation}, we 
give a combinatorial proof, with explicit computations, and show that the two 
proofs are equivalent.  We then compare our maps with Manolescu's maps 
\cite{Man07:HFKUnorientedSkein} and establish \fullref{thm:agree} in 
\fullref{sec:comparison}. Finally, in \fullref{sec:gradings}, we discuss the 
gradings in bordered--sutured Floer theory in some detail and prove a graded 
version of \fullref{thm:main}.

\subsection*{Acknowledgements}
The authors would like to thank Robert Lipshitz for helpful discussions.

%%%%%%%%%%%%%%%%%%%%%%%%%%%%%%%%%%%%%%%%%%%%%%%%%%%%%%%
\section{Algebraic preliminaries}
\label{sec:preliminaries}
%%%%%%%%%%%%%%%%%%%%%%%%%%%%%%%%%%%%%%%%%%%%%%%%%%%%%%%

We shall assume that the reader is familiar with the formal algebraic 
structures in the bordered Floer package \cite{LipOzsThu18:BFH, 
  LipOzsThu15:BFHBimodules} and the bordered--sutured Floer package 
\cite{Zar11:BSFH}. (For the reader familiar with the former but not the latter, 
the algebraic structures involved are the same.) These include type~$D$ and 
type~$A$ modules, bimodules of various types, morphisms of modules and their 
boundaries and compositions, and the box tensor operation of modules. In 
general, we follow the notation in \cite{Zar11:BSFH}, which may differ from the 
notation in \cite{LipOzsThu18:BFH}; see, however, \fullref{rmk:order}.

In the following, we will always use the calligraphic typeface (e.g.\ $\M$ and 
$\module{N}$) to denote an unspecified type~$D$ or type~$A$ module over some 
algebra $\alg{A}$, and reserve the regular italic typeface (e.g.\ $M$ and $N$) 
for the underlying $\k$-module.  However, we will denote both the type~$D$ 
bordered--sutured Floer module over the algebra $\AZ$, and the underlying 
$\k$-module, by $\BSDh (\Y)$. We will also distinguish between the identity 
type~$D$ homomorphism $\Id_{\M}$ of $\M$ and the identity $\k$-module 
isomorphism $\id_{M}$ of $M$.

Given a type~$D$ morphism $f \colon \M \to \module{N}$, let $\bdy f$ denote its 
boundary in $\Mor^{\A} (\M, \module{N})$, given by
\[
  \bdy f = (\mu \tensor \id_N) \comp (\id_A \tensor \delta_N) \comp f + (\mu 
  \tensor \id_N) \comp (\id_A \tensor f) \comp \delta_M + (d \tensor \id_N) 
  \comp f.
\]
For convenience, we sometimes denote by $d f$ the term $(d \tensor \id_N) \comp 
f$.

The proof of \fullref{thm:main} uses a lemma in homological algebra whose 
version for chain complexes first appeared in 
\cite{OzsSza05:HFBranchedDoubleSS}. Let $\A = (A, d, \mu)$ be a differential 
graded algebra, and let $\M$ and $\module{N}$ be type~$D$ modules over $\A$.

\begin{lemma}
  \label{lem:hom_alg}
  Let $\M_k = \set{(M_k, \delta_{M_k})}_{k \in \set{\infty, 0, 1}}$ be a 
  collection of type~$D$ modules over a unital differential graded algebra $\A 
  = (A, d, \mu)$ over a base ring $\k$ of characteristic $2$, and let $f_k 
  \colon \M_k \to \M_{k+1}$, $\vphi_k \colon \M_k \to \M_{k+2}$, and $\vkappa_k 
  \colon \M_k \to \M_k$ be morphisms satisfying the following conditions for 
  each $k$:
  \begin{enumerate}
    \item The morphism $f_k \colon \M_k\to \M_{k+1}$ is a type~$D$ 
      homomorphism, i.e.
      \[
        \partial f_k = 0;
      \]
    \item The morphism $f_{k+1} \circ f_k$ is homotopic to zero via homotopy 
      $\vphi_k$, i.e.
      \[
        f_{k+1} \circ f_k + \partial \vphi_k = 0;
      \]
    \item The morphism $f_{k+2} \circ \vphi_k + \vphi_{k+1} \circ f_k$ is 
      homotopic to the identity $\Id_k$ via homotopy $\vkappa_k$, i.e.
      \[
        f_{k+2} \circ \vphi_k + \vphi_{k+1} \circ f_k + \partial \vkappa_k = 
        \Id_k.
      \]
  \end{enumerate}
  A graphical representation of the conditions above is given in 
  \fullref{fig:hom_alg}. Then for each $k$, the type~$D$ modules $\M_k$ is 
  homotopy equivalent to the mapping cone $\Cone (f_{k+1})$.
  \begin{figure}
    \begin{gather}
      \mathcenter{
        \includegraphics{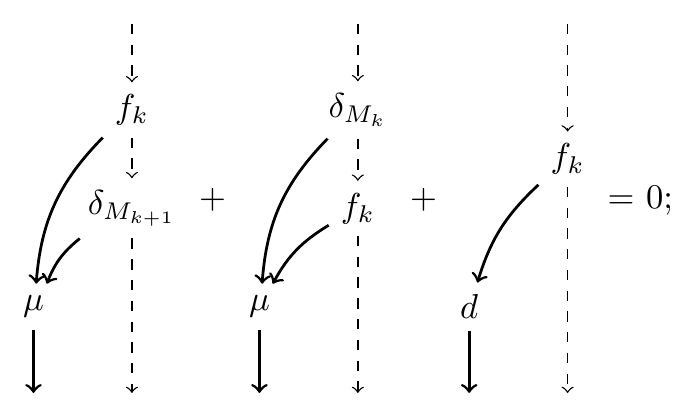}\tag{\emph{(1)}}
      }\\
      \mathcenter{
        \includegraphics{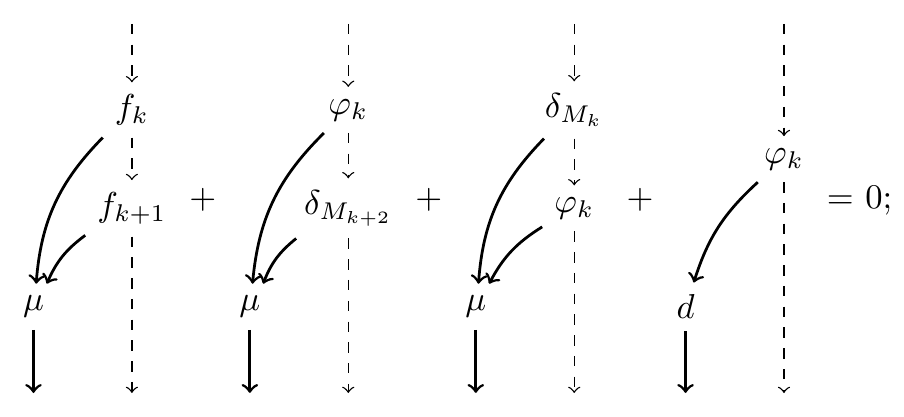}\tag{\emph{(2)}}
      }\\
      \mathcenter{
        \includegraphics{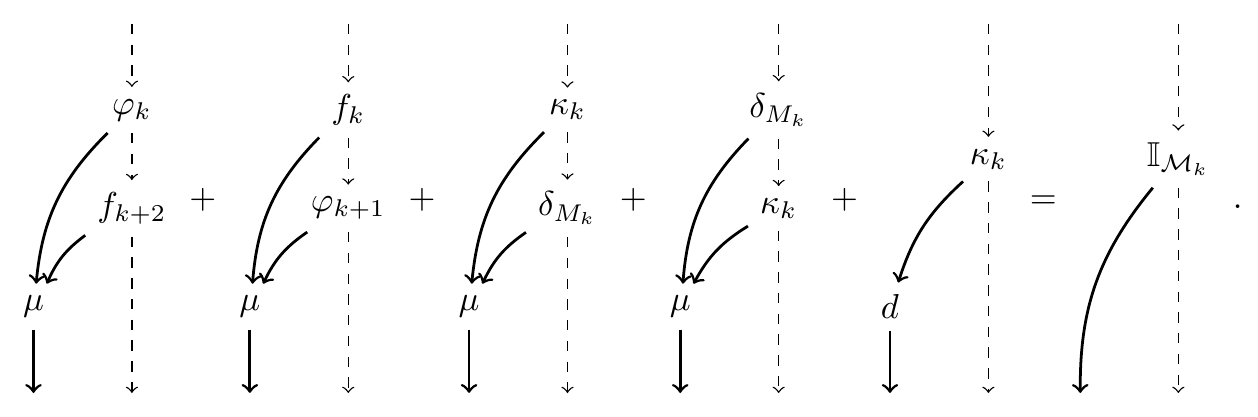}\tag{\emph{(3)}}
      }
    \end{gather}
    \caption{Graphical representations of the conditions in 
      \fullref{lem:hom_alg}.}
    \label{fig:hom_alg}
  \end{figure}
\end{lemma}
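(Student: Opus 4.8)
The plan is to imitate the standard argument for the analogous statement about chain complexes from \cite{OzsSza05:HFBranchedDoubleSS}, transported to the setting of type~$D$ modules and morphism complexes over $\A$. The point is that all the algebra needed is formal: conditions (1)--(3) say exactly that, in the differential graded category of type~$D$ modules over $\A$, the object $\M_k$, the morphism $f_{k+1}$, and the chosen nullhomotopies fit together into the data of an ``exact triangle up to homotopy.'' So I would first write down an explicit candidate homotopy equivalence between $\M_k$ and $\Cone(f_{k+1} \colon \M_{k+1} \to \M_{k+2})$, and then verify the required identities by a direct (if slightly tedious) computation using (1)--(3) and the fact that $\partial$ is a differential on the morphism spaces.

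\smallskip

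First I would fix conventions: recall from the preamble that $\Cone(f_{k+1})$ has underlying $\k$-module $M_{k+1} \oplus M_{k+2}$, with $\delta^1$ given in matrix form by $\delta_{M_{k+1}}$ and $\delta_{M_{k+2}}$ on the diagonal and $f_{k+1}$ as the off-diagonal term; the morphism complex $\Mor^{\A}(\M, \module{N})$ carries the differential $\partial$ displayed just before the lemma, and a type~$D$ homomorphism is a degree-zero cycle. I would then define the map $G \colon \M_k \to \Cone(f_{k+1})$ by the column $G = (f_k, \vphi_k)^{T}$ and the map $H \colon \Cone(f_{k+1}) \to \M_k$ by the row $H = (\vphi_{k+1}, f_{k+2})$ --- these are the natural guesses, being built out of the three maps in the cyclically-indexed ``triangle'' that land on or originate from $\M_k$. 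The verification then splits into four computations: (a) $\partial G = 0$, i.e. $G$ is a type~$D$ homomorphism; (b) $\partial H = 0$; (c) $H \circ G$ is homotopic to $\Id_{\M_k}$; and (d) $G \circ H$ is homotopic to $\Id_{\Cone(f_{k+1})}$. Computation (a) unpacks to $\partial f_k = 0$ together with $f_{k+1}\circ f_k + \partial \vphi_k = 0$, which are precisely conditions (1) (with index $k$) and (2) (with index $k$). Computation (b) similarly reduces to condition (1) with index $k+2$ and condition (2) with index $k+1$. Computation (c) gives $H \circ G = \vphi_{k+1}\circ f_k + f_{k+2}\circ\vphi_k$, which by condition (3) equals $\Id_k + \partial\vkappa_k$, so $\vkappa_k$ is the desired homotopy. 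Computation (d) is the only one requiring real work: $G \circ H$ is the $2\times 2$ matrix of morphisms on $M_{k+1}\oplus M_{k+2}$ with entries $f_k\circ\vphi_{k+1}$, $f_k\circ f_{k+2}$, $\vphi_k\circ\vphi_{k+1}$, $\vphi_k\circ f_{k+2}$, and I must exhibit a morphism $K \colon \Cone(f_{k+1}) \to \Cone(f_{k+1})$ with $G\circ H + \Id = \partial K$. The natural choice is the off-diagonal homotopy $K$ built from $\vphi_{k+2}$ (in one corner) together with a correction term involving $\vkappa_{k+1}$ and $\vkappa_{k+2}$; working out $\partial K$ in the cone and matching entries against $G\circ H + \Id$ uses conditions (2) and (3) at indices $k+1$ and $k+2$, plus the Leibniz rule $\partial(a\circ b) = (\partial a)\circ b + a \circ (\partial b)$ for composition of morphisms. (The characteristic-$2$ hypothesis means I never have to track signs, which simplifies this considerably.)

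\smallskip

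The main obstacle will be computation (d): pinning down the exact form of the homotopy $K$ on the cone and checking that the four matrix entries all match simultaneously. The subtlety is that naively taking $K$ to be the single morphism $\vphi_{k+2}$ placed in the lower-left corner of the $2\times 2$ matrix does not quite work --- its $\partial$ produces the terms $f_{k+1}\circ\vphi_{k+2}$ and $\vphi_{k+2}\circ f_{k+1}$ along the diagonal (via the internal differential of the cone), which must be absorbed, and one needs the $\vkappa$-homotopies from condition (3) (at the appropriate indices) to cancel the leftover diagonal contributions and produce the identity. I would organize this by writing $\Id_{\Cone(f_{k+1})}$ in block form, computing $G\circ H$ in block form, subtracting, and then solving the resulting system of four equations in the unknowns (the blocks of $K$) one entry at a time, using a condition of the lemma for each. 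This is the computational heart of \cite{OzsSza05:HFBranchedDoubleSS}'s argument, and once it is done for type~$D$ morphism complexes the conclusion --- that $G$ and $H$ are mutually inverse homotopy equivalences, hence $\M_k \homeq \Cone(f_{k+1})$ --- is immediate. Finally, I would remark that everything above is purely formal in the dg-category of type~$D$ modules, so the same proof applies verbatim to type~$DA$ bimodules (as needed for \fullref{thm:main}) and, after box-tensoring, descends to homology, giving the exact triangle of \fullref{cor:arbitrary}.
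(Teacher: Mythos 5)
The paper itself disposes of this lemma by citing \cite[Lemma~2.1]{PetWon18:TFHSkein}, so you are attempting a self-contained proof; your computations (a)--(c) are correct and standard, but computation (d) --- the one you yourself flag as ``the only one requiring real work'' --- has a genuine gap, and it is exactly the step where the naive argument breaks down. Write $K = \left(\begin{smallmatrix} a & b \\ c & e\end{smallmatrix}\right)$ and expand $\partial K = G \comp H + \Id$ entrywise. The $(1,2)$ entry forces $\partial b = f_k \comp f_{k+2}$, solved by $b = \vphi_{k+2}$ via condition~(2) at index $k+2$; the $(1,1)$ entry then forces $\partial a = f_k\comp\vphi_{k+1} + \vphi_{k+2}\comp f_{k+1} + \Id_{k+1}$, solved by $a = \vkappa_{k+1}$ via condition~(3) at index $k+1$; the $(2,2)$ entry forces $e = \vkappa_{k+2}$ via condition~(3) at index $k+2$. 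That exhausts the hypotheses. The remaining $(2,1)$ entry demands a morphism $c \colon \M_{k+1} \to \M_{k+2}$ with
\[
  \partial c = \vphi_k \comp \vphi_{k+1} + f_{k+1} \comp \vkappa_{k+1} + \vkappa_{k+2} \comp f_{k+1}.
\]
The right-hand side is a cycle in $\Mor(\M_{k+1}, \M_{k+2})$ (a Leibniz-rule check), but nothing in conditions (1)--(3) makes it a boundary: killing it would require a ``next-order'' homotopy (the degree-three datum of a Postnikov system) that the lemma does not supply, and for unfortunate choices of $\vphi_k$ and $\vkappa_k$ its class is genuinely nonzero. Your own bookkeeping already signals the problem: you propose to solve four equations ``using a condition of the lemma for each,'' but only three conditions are available.

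The standard resolution, which is what the cited source and its antecedents (\cite[Lemma~4.2]{OzsSza05:HFBranchedDoubleSS} and its bordered adaptations in \cite{LipOzsThu14:BFHBranchedDoubleSSI}) actually do, is to avoid proving $G \comp H \homeq \Id_{\Cone(f_{k+1})}$ directly. One shows only that $H = (\vphi_{k+1}, f_{k+2})$ is a homomorphism with $H \comp G \homeq \Id_{\M_k}$, so that $\M_k$ splits off $\Cone(f_{k+1})$ up to homotopy, and then proves the complementary summand vanishes by a separate argument: for chain complexes this is the rank/exactness count of Ozsv\'ath--Szab\'o, using the long exact sequences of the three mapping cones together with condition~(3) at all three indices to force exactness of $\homgy(\M_{k+2}) \to \homgy(\M_k) \to \homgy(\M_{k+1})$, after which quasi-isomorphism upgrades to homotopy equivalence over $\F{2}$; for type~$D$ structures one additionally uses that $\A \boxtensor \blank$ reflects homotopy equivalences (or the corresponding splitting argument in the triangulated homotopy category). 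So the missing ingredient in your write-up is not a tedious matrix computation but the entire mechanism by which one circumvents the nonexistence of the homotopy $K$; as written, step (d) would fail.
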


\begin{proof}
  This is a special case of \cite[Lemma~2.1]{PetWon18:TFHSkein}, where $B = 
  \F{2}$ is the trivial algebra.
\end{proof}

%%%%%%%%%%%%%%%%%%%%%%%%%%%%%%%%%%%%%%%%%%%%%%%%%%%%%%%
\section{The bordered--sutured Floer package of elementary tangle complements} 
\label{sec:setup}
%%%%%%%%%%%%%%%%%%%%%%%%%%%%%%%%%%%%%%%%%%%%%%%%%%%%%%%

In this section, we explicitly compute the differential graded algebra and 
type~$D$ modules that we will be working with in the rest of the paper. For the 
sake of economy, we do not provide the definitions of the algebras and modules 
in bordered--sutured Floer theory, but direct the reader to \cite{Zar11:BSFH}.

\subsection{The algebra associated to a 4-punctured sphere}
\label{ssec:4puncS2}

Let $F$ be the $4$-punctured sphere, and let $\sutF$ be the sutured surface 
$(F, \Lambda)$, where $\Lambda$ consists of $2$ distinct points on each 
component of $\bdy F$. In other words, each boundary circle of $F$ is divided 
into a positive and a negative arc.

In the context of tangles, $\sutF$ will be the bordered part of the boundary of 
a tangle complement, along which another bordered--sutured manifold can be 
glued. Thus, our first task is to parametrize $\sutF$ by an arc diagram $- 
\arcdiag$. (The orientation reversal is appropriate for type~$D$ structures.)

Let $\orarcs = \set{Z_1, Z_2, Z_3, Z_4}$ be a collection of oriented arcs, and let 
$\matchedpts = \set{a_1, \dotsc, a_{12}}$ be a collection of distinct points in $\orarcs$, 
such that
\[
  a_1 \in Z_1, \quad a_2, a_3, a_4, a_5 \in Z_2, \quad a_6, a_7, a_8, a_9 \in 
  Z_3, \quad a_{10}, a_{11}, a_{12} \in Z_4,
\]
in this order, if we traverse the arcs $Z_i$ according to their orientations.  
Let $M \colon \matchedpts \to \set{1, \dotsc, 6}$ be the matching
\begin{align*}
  M (a_1) & = M (a_3) = 1, & M (a_2) & = M (a_5) = 2, & M (a_4) & = M (a_7) = 
  3,\\
  M (a_6) & = M (a_9) = 4, & M (a_8) & = M (a_{11}) = 5, & M (a_{10}) & = M 
  (a_{12}) = 6.
\end{align*}
If we define $\arcdiag = (\orarcs, \matchedpts, M)$, then $- \arcdiag$ 
parametrizes $\sutF (- \arcdiag) = \sutF$, as in \fullref{fig:arc_diag}.

%%%%%%%%%%%%%%%%%%%%%%%%%%%%%%%%%%%%%%%%%%%%%%%%%%%%%%%
\begin{figure}[!htbp]
	\labellist
	\small\hair 2pt
	\pinlabel {\scriptsize \textcolor{red}{$a_1$}} [r] at 28 10
	\pinlabel {\scriptsize \textcolor{red}{$a_2$}} [r] at 28 35
	\pinlabel {\scriptsize \textcolor{red}{$a_3$}} [r] at 28 45
	\pinlabel {\scriptsize \textcolor{red}{$a_4$}} [r] at 28 55
	\pinlabel {\scriptsize \textcolor{red}{$a_5$}} [r] at 28 64
	\pinlabel {\scriptsize \textcolor{red}{$a_6$}} [r] at 28 85
	\pinlabel {\scriptsize \textcolor{red}{$a_7$}} [r] at 28 95
	\pinlabel {\scriptsize \textcolor{red}{$a_8$}} [r] at 28 104
	\pinlabel {\scriptsize \textcolor{red}{$a_9$}} [r] at 28 114
	\pinlabel {\scriptsize \textcolor{red}{$a_{10}$}} [r] at 28 137
	\pinlabel {\scriptsize \textcolor{red}{$a_{11}$}} [r] at 28 148
	\pinlabel {\scriptsize \textcolor{red}{$a_{12}$}} [r] at 28 159
	
	\pinlabel {\footnotesize $\rho_1$} [r] at 21 39
	\pinlabel {\footnotesize $\rho_2$} [r] at 21 50
	\pinlabel {\footnotesize $\rho_3$} [r] at 21 60
	\pinlabel {\footnotesize $\rho_4$} [r] at 21 90
	\pinlabel {\footnotesize $\rho_5$} [r] at 21 100
	\pinlabel {\footnotesize $\rho_6$} [r] at 21 109
	\pinlabel {\footnotesize $\rho_7$} [r] at 21 142
	\pinlabel {\footnotesize $\rho_8$} [r] at 21 153
	
	\pinlabel {\large $Z_1$} [r] at 1 10
	\pinlabel {\large $Z_2$} [r] at 1 50
	\pinlabel {\large $Z_3$} [r] at 1 100
	\pinlabel {\large $Z_4$} [r] at 1 148

	\pinlabel \textcolor{red}{$e_1$} [r] at 62 26
	\pinlabel \textcolor{red}{$e_2$} [r] at 62 50
	\pinlabel \textcolor{red}{$e_3$} [r] at 62 75
	\pinlabel \textcolor{red}{$e_4$} [r] at 62 100
	\pinlabel \textcolor{red}{$e_5$} [r] at 62 125
	\pinlabel \textcolor{red}{$e_6$} [r] at 62 148
	
  \pinlabel {\Large $\cong$} [r] at 153 78
  \endlabellist
  \mbox {\phantom{\large $Z_1$\!\!}}
  \includegraphics{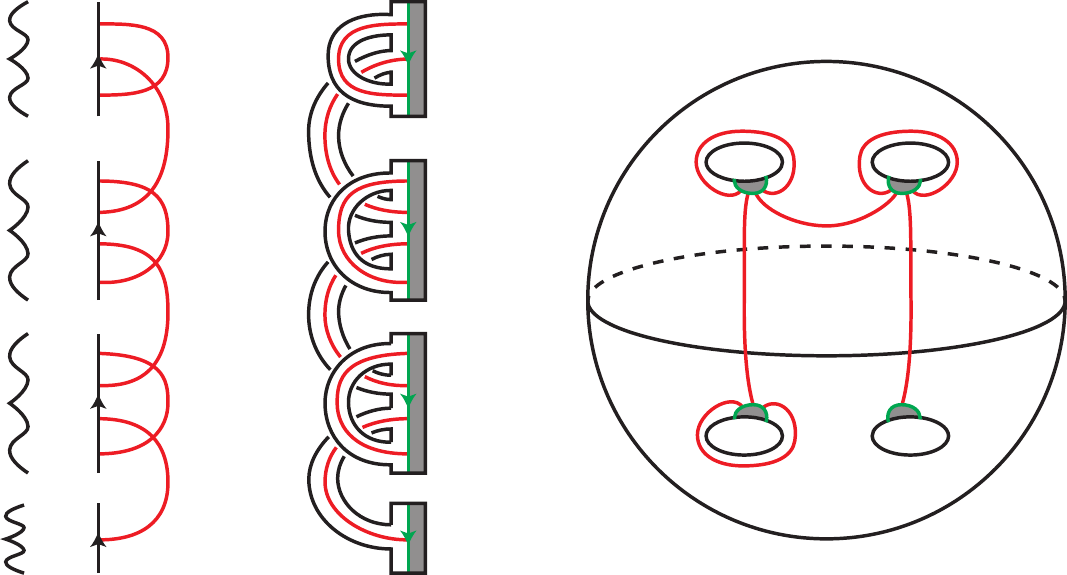}
  \caption{Left: The arc diagram $\arcdiag$. Center and right: The sutured 
    surface $\sutF (-\arcdiag) = (F, \Lambda)$, where $F$ is a $4$-punctured 
    sphere.}
  \label{fig:arc_diag}
\end{figure}
%%%%%%%%%%%%%%%%%%%%%%%%%%%%%%%%%%%%%%%%%%%%%%%%%%%%%%%

In bordered--sutured Floer theory, an algebra $\AZ$ is associated to the arc 
diagram $\arcdiag$. Each generator of $\AZ$ corresponds to a \emph{strands 
  diagram}.  Multiplication is defined by concatenation of strands diagrams 
with the convention that the product is zero if they cannot be concatenated.  
The differential is given by the sum of all possible ways to resolve the 
crossings in a given strands diagram. In both operations, there is an 
additional condition that the resulting strands diagram must not have double 
crossings between any two strands; any offending strands diagram is set to be 
zero.

The algebra $\AZ$ decomposes into a direct sum
\[
  \AZ = \bigdirsum_{i=0}^6 \AZi,
\]
corresponding to the number of \emph{occupied} arcs (for type~$A$ modules) or 
\emph{unoccupied} arcs (for type~$D$ modules). We will be working with type~$D$ 
modules defined by bordered--sutured diagrams with no $\alpha$-circles and 
exactly one $\beta$-circle, and so we will always have five unoccupied arcs.  
Thus, for our purposes, it is sufficient to consider the $5$-summand $\AZfive$. 

The algebra $\AZfive$ has six idempotents $I_{12345}$, $I_{12346}$, 
$I_{12356}$, $I_{12456}$, $I_{13456}$, and $I_{23456}$; we denote the set of 
these idempotents by $\IZfive$.  To simplify notation, we will instead denote 
these by $\Iocc{6}$, $\Iocc{5}$, $\Iocc{4}$, $\Iocc{3}$, $\Iocc{2}$, and 
$\Iocc{1}$ respectively, with the subscripts indicating the \emph{occupied} arc 
rather than the \emph{unoccupied} arcs. We will denote by $I$ the sum $\Iocc{1} 
+ \dotsb + \Iocc{6}$.

In $\arcdiag$, there are $15$ Reeb chords: $\rho_1$ from $a_2$ to $a_3$, 
$\rho_2$ from $a_3$ to $a_4$, $\rho_3$ from $a_4$ to $a_5$, and their 
concatenations $\rho_{12}$, $\rho_{23}$, and $\rho_{123}$; $\rho_4$ from $a_6$ 
to $a_7$, $\rho_5$ from $a_7$ to $a_8$, $\rho_6$ from $a_8$ to $a_9$, and their 
concatenations $\rho_{45}$, $\rho_{56}$, and $\rho_{456}$; $\rho_7$ from 
$a_{10}$ to $a_{11}$, $\rho_8$ from $a_{11}$ to $a_{12}$, and their 
concatenation $\rho_{78}$. Therefore, we define the index set
\[
  J' = \set{1, 2, 3, 12, 23, 123, 4, 5, 6, 45, 56, 456, 7, 8, 78}.
\]
In fact, we will soon be working with a $12$-element subset $J \subset J'$, 
defined by
\[
  J = \setc{j \in J'}{M (\rho_j^-) \neq M (\rho_j^+)} = \set{1, 2, 3, 12, 23, 
    4, 5, 6, 45, 56, 7, 8}.
\]
Here, $\rho^-$ and $\rho^+$ denote the starting and ending points of $\rho$ 
respectively. Similarly, given a collection $\rhos$ of Reeb chords, we use the 
notation $\rhos^- = \setc{\rho_j^-}{\rho_j \in \rhos}$
and $\rhos^+ = \setc{\rho_j^+}{\rho_j \in \rhos}$.

Recall from \cite[Definition~2.3.1]{Zar11:BSFH} the definition of a 
$p$-completion of a collection $\rhos$ of Reeb chords. Let $\abs{\rhos} = n 
\leq 5$. Since we are working with $\AZfive$, we are interested in the 
$5$-completions of $\rhos$; in our context, such a $5$-completion is a choice 
of a $(5-n)$-element subset of $\set{1, \dotsc, 6}$, corresponding to a choice 
of $5-n$ unoccupied arcs not labeled by $M (\rhos^-)$ or $M (\rhos^+)$.  As 
explained in \cite{Zar11:BSFH}, every $5$-completion $s$ of $\rhos$ defines an 
element $a (\rhos, s) \in \AZfive$, and the \emph{associated element} of 
$\rhos$ in $\AZfive$ is
\[
  a_5 (\rhos) = \sum_{s \text{ a $5$-completion of } \rhos} a (\rhos, s).
\]
The algebra $\AZfive$ is then generated over $\IZfive$ by the elements $a_5 
(\rhos)$ for all $5$-completable $\rhos$.

Since we will be working with the type~$D$ invariant, we would like to draw our 
attention to the special case where $\rhos = \set{\rho_j}$ for some fixed $j 
\in J'$. To simplify our notation, we denote the associated element of 
$\set{\rho_j}$ by
\[
  (j) = a_5 (\set{\rho_j}).
\]
More generally, for $j_1, \dotsc, j_\ell \in J'$, we let
\[
  (j_1, \dotsc, j_\ell) = a_5 (\set{\rho_{j_1}}) \dotsm a_5 
  (\set{\rho_{j_\ell}}),
\]
the product of these associated elements. We will be using this notation 
throughout the rest of this paper.

\begin{remark}
  \label{rmk:no_repeat}
  It will be helpful for us to observe that, if $(j_1, \dots, j_\ell) \neq 0$, 
  then each of $1, 3, 4, 6, 7, 8$ can appear in $(j_1, \dotsc, j_\ell)$ at most 
  once, not only as indices in $J'$, but in fact as actual digits. For example, 
  we have $(45, 4) = 0$; here, even though the index $4 \in J'$ only appears 
  once, the digit $4$ appears twice. To verify this claim, suppose the digit 
  $4$ appears more than once. Writing the product $(j_1, \dotsc, j_\ell)$ as 
  $\Istart \cdot a_5 (\rhos) \cdot \Iend$, where $\Istart$ and $\Iend$ are 
  idempotents, we see that the coefficient of $[\rho_4]$ in the homology class 
  $[\rhos] \in H_1 (\orarcs, \matchedpts)$ is greater than $1$. This means that 
  $\rho_j^- = \rho_{j'}^- = \rho_4^-$ for at least two Reeb chords $\rho_j$ and 
  $\rho_{j'}$ in $\rhos$, and hence that $\rhos$ is not $5$-completable. The 
  same analysis works for the digits $1$ and $7$, while a similar argument 
  involving $\rho_j^+$ works for $3$, $6$, and $8$.

  An easy generalization of the argument shows that the digits $2$ and $5$ can 
  appear in $(j_1, \dotsc, j_\ell)$ at most twice.
\end{remark}

Suppose further that $\rhos = \set{\rho_j}$ for some $j \in J$. For such 
$\rhos$, observe that since $M (\rho_j^-) \neq M (\rho_j^+)$, there is a unique 
$4$-element subset of $\set{1, \dotsc, 6}$ disjoint from $\set{M (\rho_j^-), M 
  (\rho_j^+)}$; in other words, $\set{\rho_j}$ has a unique $5$-completion 
$s_j$. Thus, we see that
\[
  (j) = a_5 (\set{\rho_j}) = a (\set{\rho_j}, s_j).
\]
Unless otherwise stated, from now on, we will work only with the index set $J$ 
rather than $J'$; see \fullref{rmk:no_full_reeb} (and \fullref{rmk:no_repeat}) 
for some justification.

It is evident that our notation gives a compact way to represent multiplication 
of many, but not all, algebra elements. (As we shall see, this will be 
sufficient.) Note that, however, for a given element $a \in \AZfive$, there may 
be more than one way to represent $a$ in this notation.  For example, 
\fullref{fig:alg_mult} illustrates the fact that
\[
  (12, 3, 2) = a_5 (\set{\rho_{12}}) a_5 (\set{\rho_3}) a_5 
  (\set{\rho_2}) = a_5 (\set{\rho_{123}, \rho_2}) = a_5 (\set{\rho_2}) 
  a_5 (\set{\rho_1}) a_5 (\set{\rho_{23}}) = (2, 1, 23).
\]
A similar calculation shows that $(45, 6, 5) = (5, 4, 56)$.

%%%%%%%%%%%%%%%%%%%%%%%%%%%%%%%%%%%%%%%%%%%%%%%%%%%%%%%
\begin{figure}[!htbp]
	\captionsetup{aboveskip={\dimexpr10pt+2pt+\sactualfontsize\relax}}
  \labellist
  \small\hair 2pt
	
  \pinlabel {\footnotesize $\rho_1$} [r] at 0 50
  \pinlabel {\footnotesize $\rho_2$} [r] at 0 62
  \pinlabel {\footnotesize $\rho_3$} [r] at 0 73
  \pinlabel {\footnotesize $\rho_4$} [r] at 0 119
  \pinlabel {\footnotesize $\rho_5$} [r] at 0 130
  \pinlabel {\footnotesize $\rho_6$} [r] at 0 142
  \pinlabel {\footnotesize $\rho_7$} [r] at 0 185
  \pinlabel {\footnotesize $\rho_8$} [r] at 0 197	

  \pinlabel {\Large $=$} at 155 118
  \pinlabel {\Large $=$} at 212 118

  \pinlabel $(12)$ [t] at 58 0
  \pinlabel $(3)$ [t] at 90 0
  \pinlabel $(2)$ [t] at 128 0
  \pinlabel $a_5(\set{\rho_{123},\rho_2})$ [t] at 183 0
  \pinlabel $(2)$ [t] at 238 0
  \pinlabel $(1)$ [t] at 273 0
  \pinlabel $(23)$ [t] at 308 0

  \endlabellist
  \mbox{\phantom{\footnotesize $\rho_1$\!}}
  \includegraphics{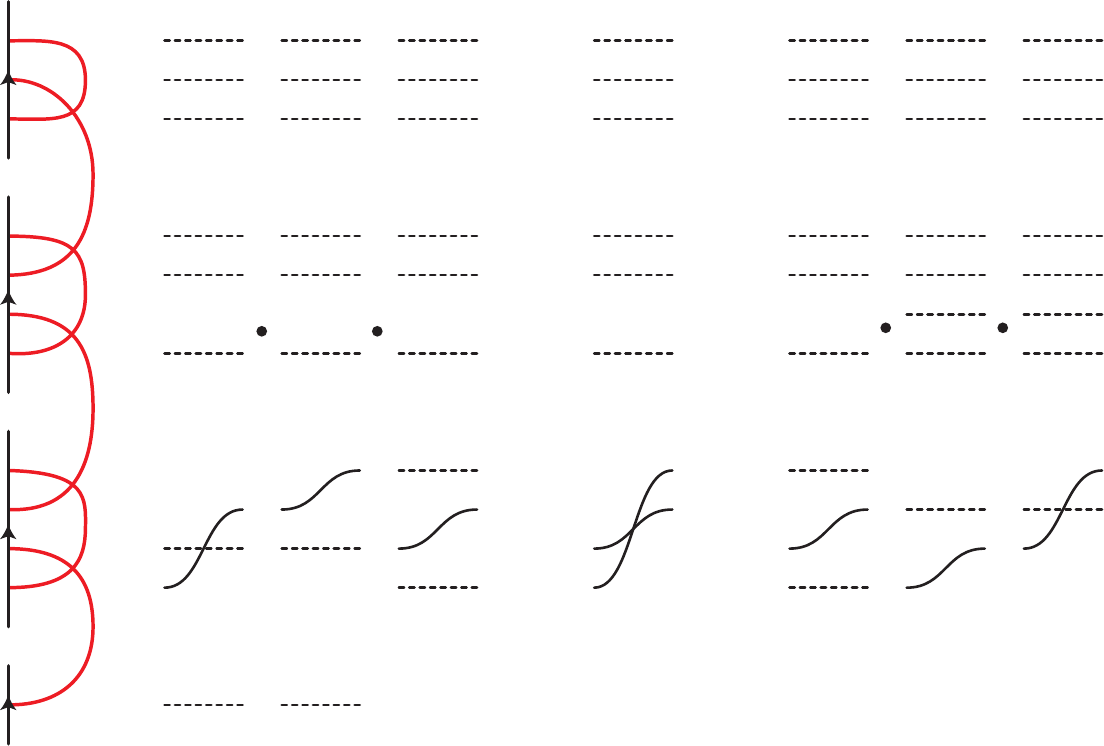}
  \caption{In our notation, $(12, 3, 2)$ and $(2, 1, 23)$ both represent the 
    same algebra element, $a_5 (\set{\rho_{123}, \rho_2})$.}
  \label{fig:alg_mult}
\end{figure}
%%%%%%%%%%%%%%%%%%%%%%%%%%%%%%%%%%%%%%%%%%%%%%%%%%%%%%%

Consider now an element $(j_1, \dotsc, j_\ell) \in \AZfive$, with $j_1, \dotsc, 
j_\ell \in J$.  For such an element, our notation gives a simple algorithm that 
computes its differential: It is the sum of all possible ways to insert a comma 
(such that the resulting notation makes sense) and exchange the indices 
adjacent to the new comma.  For example, \fullref{fig:alg_diff} illustrates the 
fact that
\[
  d (12, 3, 4, 56, 2) = (2, 1, 3, 4, 56, 2) + (12, 3, 4, 6, 5, 2).
\]
This observation follows from the Leibniz rule.

%%%%%%%%%%%%%%%%%%%%%%%%%%%%%%%%%%%%%%%%%%%%%%%%%%%%%%%
\begin{figure}[!htbp]
	\captionsetup{aboveskip={\dimexpr10pt+2pt+\sactualfontsize\relax}}

  \labellist \small\hair 2pt
  \pinlabel {\footnotesize $\rho_1$} [r] at 1 50
  \pinlabel {\footnotesize $\rho_2$} [r] at 1 62
  \pinlabel {\footnotesize $\rho_3$} [r] at 1 73
  \pinlabel {\footnotesize $\rho_4$} [r] at 1 118
  \pinlabel {\footnotesize $\rho_5$} [r] at 1 129
  \pinlabel {\footnotesize $\rho_6$} [r] at 1 141
  \pinlabel {\footnotesize $\rho_7$} [r] at 1 185
  \pinlabel {\footnotesize $\rho_8$} [r] at 1 197

  \pinlabel {\large $d$} at 102 132
  \pinlabel {\Large $+$} at 174 121

  \pinlabel $(12,3,4,56,2)$ [t] at 59 0
  \pinlabel $(2,1,3,4,56,2)$ [t] at 137 0
  \pinlabel $(12,3,4,6,5,2)$ [t] at 210 0

  \endlabellist
  \mbox{\phantom{\footnotesize $\rho_8$\!\!}}
  \includegraphics{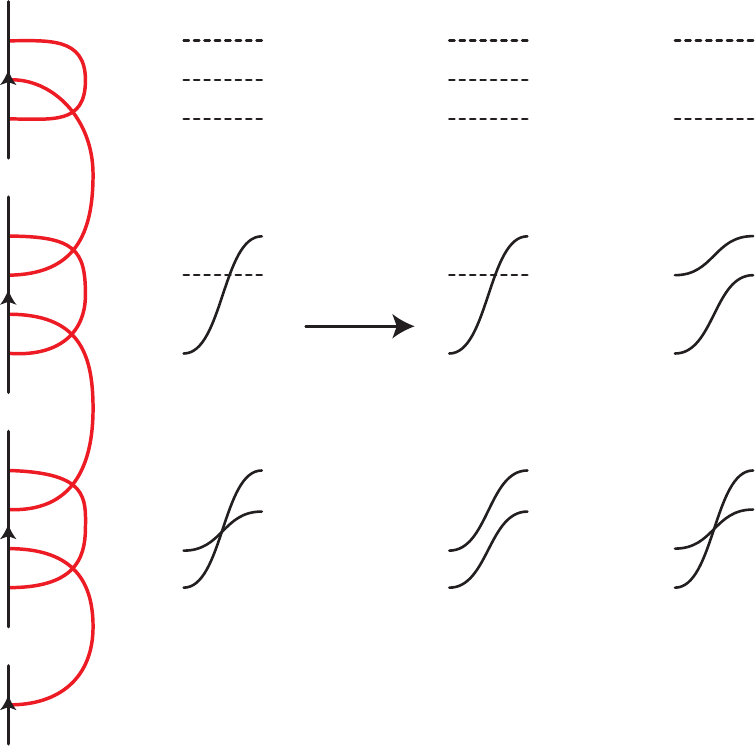}
  \mbox{\phantom{\small \!$,5,2)$}}
  \caption{The differential of the algebra element $(12, 3, 4, 56, 2)$.}
  \label{fig:alg_diff}
\end{figure}
%%%%%%%%%%%%%%%%%%%%%%%%%%%%%%%%%%%%%%%%%%%%%%%%%%%%%%%

\subsection{The modules associated to the three tangle complements}
\label{ssec:tancomp}

For $k \in \set{\infty, 0, 1}$, consider the tangle complement $B_k = B^3 
\setminus \nbhd{\elT_k}$. Let $\Bk$ be the bordered--sutured manifold $(B_k, 
\mersut_k, - \arcdiag, \elparam_k)$, where $\mersut_k$ consists of two pairs of 
oppositely oriented meridional sutures, one for each arc of $\elT_k$, and 
$\elparam_k \colon G (- \arcdiag) \into \bdy B_k$ gives the parametrization of 
the $4$-punctured sphere $\bdy B_k \intersect \bdy B^3$ shown in 
\fullref{fig:tancomp}.  In other words, $\elparam_k$ allows us to identify 
$\sutF (- \arcdiag)$ with the sutured surface $\paren{\bdy B_k \intersect \bdy 
  B^3, \mersut_k \intersect \bdy B^3 \intersect \closure{\nbhd{\elT_k}}}$ that 
constitutes the bordered part of the boundary, and we will make this 
identification from now on. The ``sutured'' part of the boundary, consisting of 
the two inner cylinders, is divided by $\mersut_k$ into one positive and two 
negative regions.  

%%%%%%%%%%%%%%%%%%%%%%%%%%%%%%%%%%%%%%%%%%%%%%%%%%%%%%%
\begin{figure}[!htbp]
  \captionsetup{aboveskip=\dimexpr10pt+5pt+\Lactualfontsize\relax}
  \labellist
  \Large\hair 5pt

  \pinlabel $\B_1$ [t] at 58 0
  \pinlabel $\B_\infty$ [t] at 191 0
  \pinlabel $\B_0$ [t] at 320 0

  \endlabellist
  \includegraphics{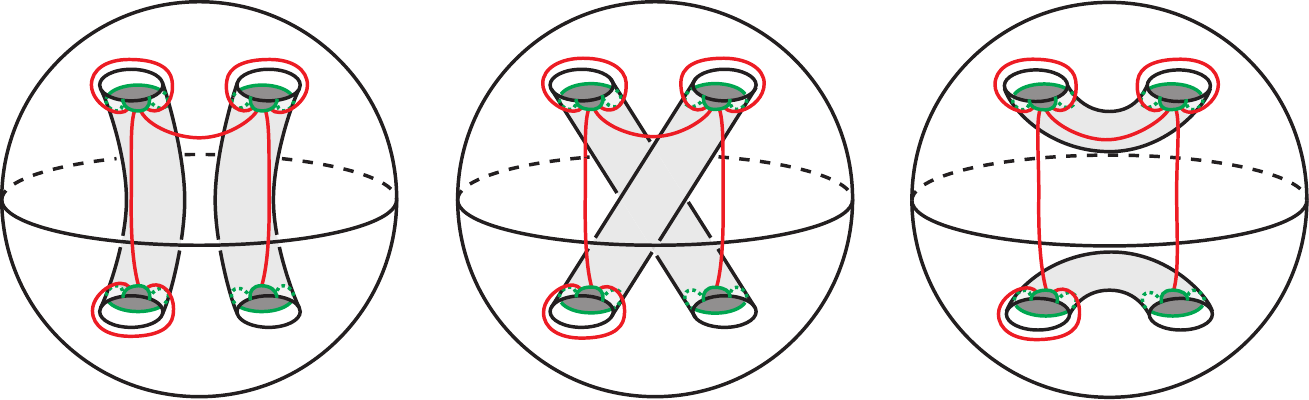}
  \caption{The bordered--sutured manifolds $\Bk$.}
  	\label{fig:tancomp}
\end{figure}
%%%%%%%%%%%%%%%%%%%%%%%%%%%%%%%%%%%%%%%%%%%%%%%%%%%%%%%

Below, we will compute the type~$D$ modules $\BSDh (\Bk)$. Recall that for a 
bordered--sutured manifold $\Y = (Y, \Gamma, \arcdiag, \phi)$, $\BSDh (\Y)$ is a 
type~$D$ module over $\alg{A} (- \arcdiag)$. Therefore, $\BSDh (\Bk)$ is a 
type~$D$ module over $\AZ$.

%%%%%%%%%%%%%%%%%%%%%%%%%%%%%%%%%%%%%%%%%%%%%%%%%%%%%%%
\begin{figure}[!htbp]
  \begin{subfigure}[b]{\textwidth}
    \captionsetup{aboveskip={\dimexpr10pt+2pt+\sactualfontsize\relax}, 
      belowskip={\floatsep}}
    \centering
    \labellist
    \scriptsize\hair 2pt
    \pinlabel \textcolor{red}{4} [t] at 65 1
    \pinlabel \textcolor{red}{5} [t] at 82 1
    \pinlabel \textcolor{red}{3} [t] at 109 1
    \pinlabel \textcolor{red}{4} [t] at 124 1
    \pinlabel \textcolor{red}{2} [t] at 270 1
    \pinlabel \textcolor{red}{3} [t] at 284 1
    \pinlabel \textcolor{red}{1} [t] at 311 1
    \pinlabel \textcolor{red}{2} [t] at 327 1
    \pinlabel \textcolor{red}{6} at 43 48
    \pinlabel \textcolor{red}{5} at 53 47
    \pinlabel \textcolor{red}{6} at 63 48
    \pinlabel \textcolor{red}{1} at 341 47

    \pinlabel {\small 6} [t] at 73 0
    \pinlabel {\small 5} [t] at 95 0
    \pinlabel {\small 4} [t] at 117 0
    \pinlabel {\small 3} [t] at 277 0
    \pinlabel {\small 2} [t] at 298 0
    \pinlabel {\small 1} [t] at 320 0
    \pinlabel {\small 7} at 48 49
    \pinlabel {\small 8} at 58 49

    \pinlabel $x$ at 146 32

    \endlabellist
    \includegraphics[scale=1.0]{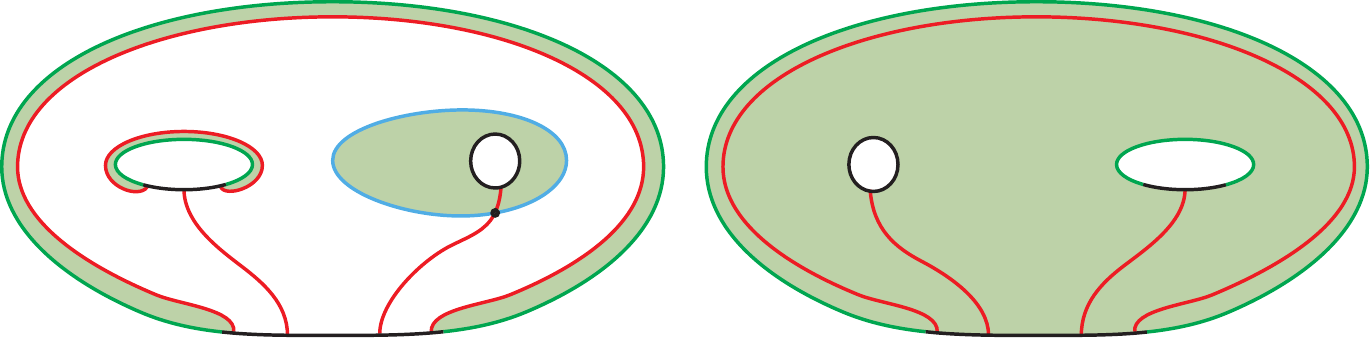}
    \caption{The bordered--sutured diagram $\HD_1$.}
    \label{fig:hd_one}
  \end{subfigure}
  \begin{subfigure}[b]{\textwidth}
    \captionsetup{aboveskip={\dimexpr10pt+2pt+\sactualfontsize\relax}, 
      belowskip={\floatsep}}
    \centering
    \labellist
    \scriptsize\hair 2pt
    \pinlabel \textcolor{red}{4} [t] at 65 1
    \pinlabel \textcolor{red}{5} [t] at 82 1
    \pinlabel \textcolor{red}{3} [t] at 109 1
    \pinlabel \textcolor{red}{4} [t] at 124 1
    \pinlabel \textcolor{red}{2} [t] at 270 1
    \pinlabel \textcolor{red}{3} [t] at 284 1
    \pinlabel \textcolor{red}{1} [t] at 311 1
    \pinlabel \textcolor{red}{2} [t] at 327 1
    \pinlabel \textcolor{red}{6} at 43 48
    \pinlabel \textcolor{red}{5} at 53 47
    \pinlabel \textcolor{red}{6} at 63 48
    \pinlabel \textcolor{red}{1} at 341 47

    \pinlabel {\small 6} [t] at 73 0
    \pinlabel {\small 5} [t] at 95 0
    \pinlabel {\small 4} [t] at 117 0
    \pinlabel {\small 3} [t] at 277 0
    \pinlabel {\small 2} [t] at 298 0
    \pinlabel {\small 1} [t] at 320 0
    \pinlabel {\small 7} at 48 49
    \pinlabel {\small 8} at 58 49

    \pinlabel $y_2$ at 115 22
    \pinlabel $y_1$ at 77 22
    \pinlabel $y_3$ at 318 22

    \endlabellist
    \includegraphics[scale=1.0]{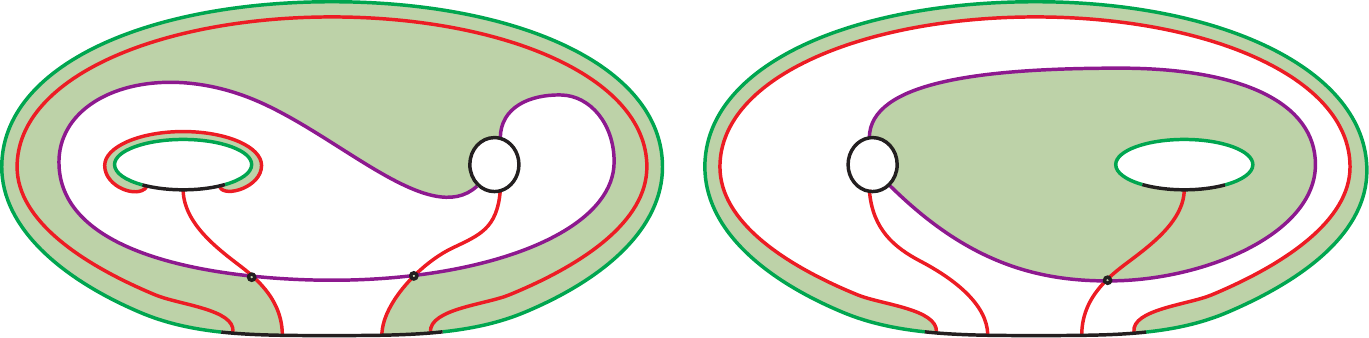}
    \caption{The bordered--sutured diagram $\HD_\infty$.}
    \label{fig:hd_infty}
  \end{subfigure}
  \begin{subfigure}[b]{\textwidth}
    \captionsetup{aboveskip={\dimexpr10pt+2pt+\sactualfontsize\relax}}
    \centering
    \labellist
    \scriptsize\hair 2pt
    \pinlabel \textcolor{red}{4} [t] at 65 1
    \pinlabel \textcolor{red}{5} [t] at 82 1
    \pinlabel \textcolor{red}{3} [t] at 109 1
    \pinlabel \textcolor{red}{4} [t] at 124 1
    \pinlabel \textcolor{red}{2} [t] at 270 1
    \pinlabel \textcolor{red}{3} [t] at 284 1
    \pinlabel \textcolor{red}{1} [t] at 311 1
    \pinlabel \textcolor{red}{2} [t] at 327 1
    \pinlabel \textcolor{red}{6} at 43 48
    \pinlabel \textcolor{red}{5} at 53 47
    \pinlabel \textcolor{red}{6} at 63 48
    \pinlabel \textcolor{red}{1} at 341 47

    \pinlabel {\small 6} [t] at 73 0
    \pinlabel {\small 5} [t] at 95 0
    \pinlabel {\small 4} [t] at 117 0
    \pinlabel {\small 3} [t] at 277 0
    \pinlabel {\small 2} [t] at 298 0
    \pinlabel {\small 1} [t] at 320 0
    \pinlabel {\small 7} at 48 49
    \pinlabel {\small 8} at 58 49

    \pinlabel $z_1$ at 64 32
    \pinlabel $z_2$ at 341 26

    \endlabellist
    \includegraphics[scale=1.0]{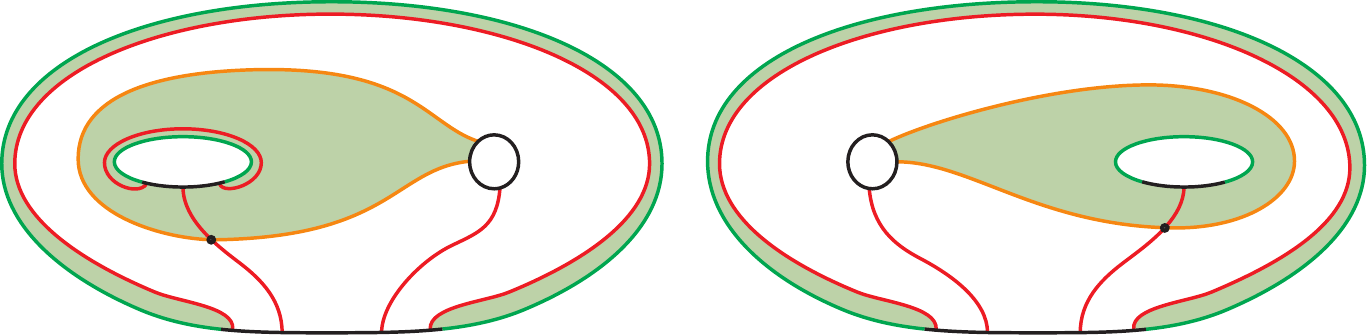}
    \caption{The bordered--sutured diagram $\HD_0$.}
    \label{fig:hd_zero}
  \end{subfigure}
  \caption{The bordered--sutured diagrams $\HD_k$ for $\Bk$. Red curves 
    represent $\alphas = \alphas^a$, green curves represent $\bdy \Sigma 
    \setminus \orarcs$, and the blue, purple, and gold curves represent 
    $\betasone$, $\betasinfty$, and $\betaszero$ respectively. A small red 
    integer $i$ indicates the arc $\alpha^a_i = \psi (e_i)$, and a larger black 
    digit $j$ indicates a Reeb chord $\rho_j$. (Only those Reeb chords where 
    $j$ consists of a single digit are shown.)
    The regions adjacent to $\bdy \Sigma \setminus \orarcs$ are shaded in light 
    green.}
  \label{fig:hd}
\end{figure}
%%%%%%%%%%%%%%%%%%%%%%%%%%%%%%%%%%%%%%%%%%%%%%%%%%%%%%%

Bordered sutured diagrams $\HD_k = (\Sigma, \alphas, \betas^k, -\arcdiag, 
\psi)$ for $\Bk$ are shown in \fullref{fig:hd}. Here, the three diagrams share 
the same surface with boundary $\Sigma$, collection $\alphas = \alphas^a$ of 
$\alpha$-arcs, arc diagram $\arcdiag$, and embedding $\psi \colon G (\arcdiag) 
\into \Sigma$; the only difference is in the collections $\betas^k$, each of 
which consists of a single $\beta$-circle $\beta^k$. Note that $\HD_1$, 
$\HD_\infty$, and $\HD_0$ are provincially admissible but not admissible. (See 
\cite[Definition~4.4.1]{Zar11:BSFH} for the definitions of admissibility.)

Since there is a single $\beta$-circle in each of these diagrams, there will 
always be five unoccupied arcs in $\arcdiag$, and we may view $\BSDh (\Bk)$ as 
a type~$D$ module over $\A = \AZfive$, confirming our claim in 
\fullref{ssec:4puncS2}.

\subsubsection{The module \texorpdfstring{$\BSDh (\B_1)$}{BSD(B1)}}
\label{sssec:BSDY1}

We begin by computing the type~$D$ module $\BSDh (\B_1)$. In the Heegaard 
diagram $\HD_1$, there is exactly one intersection point $x$ between $\alphas = 
\alphas^a$ and $\betas^1 = \set{\beta^1}$, and so $\BSDh (\B_1)$ is generated 
by a single element $\x$, with idempotent
\[
  \Iocc{3} \cdot \x = \x.
\]
There is only one region $R$ in $\HD_1$ not adjacent to $\bdy \Sigma \setminus 
\orarcs$, and so all domains are of the form $n R$. Note that $\bdy^\bdy R = 
[-\rho_4] + [-\rho_5] + [-\rho_6] + [-\rho_7] + [-\rho_8]$. If there exists a 
holomorphic curve $u$, with homology class $[u] = B \in \pi_2 (\x, \x)$, that 
contributes to $\diff (\x)$, then $u \in \embmoduli^B (\x, \x; \source; 
\vec{P})$, where
\begin{itemize}
  \item $\vec{P}$ is a discrete ordered partition;
  \item The domain $[B]$ of $B$ satisfies $\bdy^\bdy [B] = [\vec{\rho} 
    (\vec{P})]$;
  \item $\ind (B, \vec{\rho} (\vec{P})) = 1$; and
  \item $a (- \vec{\rho} (\vec{P})) \neq 0$.
\end{itemize}
The second and the last of these conditions, together with 
\fullref{rmk:no_repeat}, imply that the coefficients of $[-\rho_4]$, 
$[-\rho_6]$, $[-\rho_7]$, and $[-\rho_8]$ in $\bdy^\bdy [B]$ cannot be greater 
than $1$; therefore, the only domain that can possibly contribute to $\diff 
(\x)$ is $R$, and we may consider only the homology class $B$ with $[B] = R$.  
Now a simple calculation using \cite[Proposition~5.3.5]{Zar11:BSFH} and the 
first and the third conditions above shows that the decorated source $\source$ 
must satisfy $\chi (\source) = 1$ and $\# \vec{\rho} (\vec{P})$ must be $5$.  
Hence, $\source$ must be a topological disk, with one $(+)$-puncture, one 
$(-)$-puncture, and five $e$-punctures labeled by $-\rho_4$, $-\rho_6$, 
$-\rho_7$, $-\rho_8$, and $-\rho_5$, in that order with the standard 
orientation. Here, one may deduce the order of the punctures by inspecting 
$\HD_1$; in fact, this will also reveal the order of the elements of $\vec{P}$.  
With these choices of $B$, $\source$, and $\vec{P}$, the Riemann mapping 
theorem implies that $\embmoduli^B (\x, \x; \source; \vec{P})$ is indeed 
nonempty and in fact consists of exactly one holomorphic representative $u$.  
This shows that
\[
  \diff (\x) = a_5 (\rho_4) a_5 (\rho_6) a_5 (\rho_7) a_5 (\rho_8) a_5 (\rho_5) 
  \tensor \x = (4, 6, 7, 8, 5) \tensor \x.
\]
One may easily see from our notation that
\[
  d (4, 6, 7, 8, 5) = 0 = (4, 6, 7, 8, 5)^2,
\]
where the second equality follows from \fullref{rmk:no_repeat}, with forbidden 
digits $4, 6, 7, 8$ repeated; this confirms that $\BSDh (\B_1)$ does indeed 
satisfy the type~$D$ structure equation.

\begin{remark}
  \label{rmk:computations}
  In the following discussion, we will generally not provide justification for 
  the boundary operator computations, as each of them follows from a line of 
  reasoning similar to the above: In general, \fullref{rmk:no_repeat} provides 
  an easy way to identify the domains that can possibly contribute, of which 
  there are finitely many. The index formula 
  \cite[Proposition~5.3.5]{Zar11:BSFH} implies that $\source$ must be a 
  topological disk, and the Riemann mapping theorem then implies the existence 
  of a unique holomorphic representative. See \fullref{rmk:complication} below 
  for the only complication.
\end{remark}

\begin{remark}
  \label{rmk:no_full_reeb}
  To aid in one's computations, it is also worth noting that if a holomorphic 
  curve with decorated source $\source$ contributes to any map defined by 
  counting holomorphic curves, none of the $e$-punctures of $\source$ can be 
  labeled by $-\rho_j$ with $j \in J' \setminus J = \set{123, 456, 78}$. To see 
  this, suppose $q$ is an $e$-puncture labeled by, say, $- \rho_{123}$; then by 
  examining $\HD_k$, the next puncture on $\bdy \source$ after $q$, in the 
  standard orientation, must be an $e$-puncture labeled by $- \rho_{j_0}$, 
  where $j_0$ must contain the digit $3$. This violates the restriction in 
  \fullref{rmk:no_repeat}.
\end{remark}

\subsubsection{The module \texorpdfstring{$\BSDh (\B_\infty)$}{BSD(Binfinity)}}
\label{sssec:BSDYi}

In $\HD_\infty$, there are three intersection points between $\alphas$ and 
$\betas^{\infty}$, which we label by $y_1$, $y_2$, and $y_3$ as in 
\fullref{fig:hd}. The corresponding generators have idempotents
\[
  \Iocc{5} \cdot \y_1 = \y_1, \quad \Iocc{3} \cdot \y_2 = \y_2, \quad \Iocc{1} 
  \cdot \y_3 = \y_3.
\]
The boundary operation is given by
\begin{align*}
  \delta (\y_1) & = (5) \tensor \y_2 + (7, 8, 5, 12, 3) \tensor \y_2 + (7, 8, 
  5, 2) \tensor \y_3,\\
  \delta (\y_2) & = 0,\\
  \delta (\y_3) & = (1, 3) \tensor \y_2.
\end{align*}

\subsubsection{The module \texorpdfstring{$\BSDh (\B_0)$}{BSD(B0)}}
\label{sssec:BSDY0}

In $\HD_0$, there are two intersection points between $\alphas$ and $\betas^0$, 
which we label by $z_1$ and $z_2$ as in \fullref{fig:hd}. The corresponding 
generators have idempotents
\[
  \Iocc{5} \cdot \z_1 = \z_1, \quad \Iocc{1} \cdot \z_2 = \z_2.
\]
The boundary operation is given by
\begin{align*}
  \delta (\z_1) & = (5, 12, 3, 4, 6) \tensor \z_1 + (5, 2) \tensor \z_2 + (5, 
  12, 3, 4, 56, 2) \tensor \z_2,\\
  \delta (\z_2) & = (1, 3, 4, 6) \tensor \z_1 + (1, 3, 4, 56, 2) \tensor \z_2.
\end{align*}

\begin{remark}
  \label{rmk:complication}
  We illustrate one consideration we must take in the calculation above, beyond 
  the argument outlined in \fullref{rmk:computations}. Let $B \in \pi_2 (\z_2, 
  \z_1)$ be the homology class from $\z_1$ to $\z_2$ that gives rise to the 
  term $(5, 12, 3, 4, 56, 2) \tensor \z_2$ above; then $[B] = R_1 + 2 R_2$, 
  where $R_2$ is the unique region adjacent to $- \rho_2$ and $- \rho_5$, and 
  $R_1$ is the only other non-forbidden region.  Since $g = 1$ and $e (B) = - 5 
  / 2$, applying \cite[Proposition~5.3.5]{Zar11:BSFH} to $B$ shows that we must 
  have $\chi (\source) = \# \vec{P} - 5$. Because $\vec{P}$ is discrete, 
  inspecting $\HD_0$, we see that $\# \vec{P} \geq 4$. For all other domains 
  involved in the calculations in this subsection, the analogous restriction on 
  $\# \vec{P}$ and the fact that $\chi (\source) \leq 1$ (as $\source$ is a 
  connected surface with at least one boundary component) together imply that 
  $\chi (\source) = 1$; here, however, we obtain two possibilities: $\chi 
  (\source) = 1$ and $\# \vec{P} = 6$; and $\chi (\source) = -1$ and $\# 
  \vec{P} = 4$. We must argue that we may discard the latter possibility, as 
  follows. We note that $\# \vec{P} = 4$ implies that $\vec{\rho} (\vec{P})$ 
  contains, in some order, either $- \rho_{123}$ and $- \rho_{2}$, or $- 
  \rho_{12}$ and $- \rho_{23}$; and two other Reeb chords with index digits 
  $4$, $5$, and $6$. We know that $- \rho_{123} \notin \vec{\rho} (\vec{P})$ by 
  \fullref{rmk:no_full_reeb}, and so $\vec{\rho} (\vec{P})$ must contain $- 
  \rho_{12}$ and $- \rho_{23}$; but $(12, 23) = (23, 12) = 0$. (Alternatively, 
  we may apply \fullref{rmk:no_full_reeb} to $- \rho_{456}$ and observe that 
  $(45, 56) = (56, 45) = 0$.) We may then proceed with the usual argument for 
  the former possibility, where $\chi (\source) = 1$ and $\# \vec{P} = 6$, 
  which gives the aforementioned term in $\delta (\z_1)$.

  In \fullref{sec:skein_via_computation} and in particular the proof of 
  \fullref{prop:poly_comp}, this complication will again arise whenever the 
  domain $[B]$ has multiplicity greater than $1$ at some region.
\end{remark}

\subsection{Proof of the main theorem without gradings}
\label{ssec:proof_main}

The key step to proving \fullref{thm:main} will be the following special case:

\begin{proposition}
  \label{prop:elementary}
  There exist type~$D$ homomorphisms $f_k \colon \BSDh (\Bk) \to \BSDh 
  (\Bnext)$ such that
  \[
    \BSDh (\Bk) \homeq \Cone (f_{k+1} \colon \BSDh (\Bnext) \to \BSDh (\Bprev))
  \]
  as type~$D$ structures.
\end{proposition}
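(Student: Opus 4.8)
The plan is to reduce \fullref{prop:elementary} entirely to \fullref{lem:hom_alg}. Concretely, I would exhibit type~$D$ morphisms $f_k \colon \BSDh(\Bk) \to \BSDh(\Bnext)$, $\vphi_k \colon \BSDh(\Bk) \to \BSDh(\Bprev)$, and $\vkappa_k \colon \BSDh(\Bk) \to \BSDh(\Bk)$ and verify the three identities $\partial f_k = 0$, $f_{k+1}\circ f_k + \partial\vphi_k = 0$, and $f_{k+2}\circ\vphi_k + \vphi_{k+1}\circ f_k + \partial\vkappa_k = \Id_k$; the conclusion is then immediate from \fullref{lem:hom_alg}. So all the work is in constructing the triple $(f_k, \vphi_k, \vkappa_k)$ and checking conditions (1)--(3).

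The theoretical route is to build these maps by counting holomorphic polygons. The three diagrams $\HD_k$ of \fullref{fig:hd} share $\Sigma$, the $\alpha$-arcs $\alphas$, the arc diagram, and the embedding $\psi$, differing only in the $\beta$-circle; moreover, near the elementary tangle the circles $\betas^1$, $\betas^\infty$, $\betas^0$ sit in a standard local configuration in which consecutive pairs intersect minimally, so that the three diagrams assemble into a single multi-diagram. I would then define $f_k$ by counting embedded holomorphic triangles with boundary on $\alphas$, $\betask$, $\betasnext$ carrying prescribed Reeb-chord data along the bordered boundary (in the sense of \cite{Zar11:BSFH}), and $\vphi_k$, $\vkappa_k$ by counting holomorphic quadrilaterals and pentagons respectively. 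This forces one to port the polygon-counting formalism of Lipshitz--Ozsv\'ath--Thurston (used in \cite{LipOzsThu16:BFHBranchedDoubleSSII} to compare the branched-double-cover spectral sequences of \cite{OzsSza05:HFBranchedDoubleSS}) into the bordered--sutured setting: one needs the index/expected-dimension formula, achievability of transversality, and the compactness-plus-gluing description of the ends of $1$-dimensional moduli spaces of $(n+1)$-gons, now with the extra Reeb-chord degenerations tracked by the algebra $\AZfive$. Granting this package, conditions (1)--(3) become the usual ``count the ends'' identities: the ends of the triangle moduli give $\partial f_k = 0$; those of the quadrilateral moduli give $f_{k+1}\circ f_k + \partial\vphi_k$ plus a degeneration onto the third $\beta$-circle that I must show is empty (the relevant domain is forced to be trivial or to have a negative local multiplicity); and those of the pentagon moduli give $f_{k+2}\circ\vphi_k + \vphi_{k+1}\circ f_k + \partial\vkappa_k$ plus a count of small innermost configurations which, exactly as in the surgery exact triangle, is the identity $\Id_k$. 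This is carried out in \fullref{sec:skein_via_polygon}.

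As an alternative (and cross-check), since each of $\BSDh(\B_1)$, $\BSDh(\B_\infty)$, $\BSDh(\B_0)$ has at most three generators --- all computed explicitly in \fullref{sec:setup} --- one can instead simply \emph{write down} $f_k$, $\vphi_k$, $\vkappa_k$ as matrices of elements of $\AZfive$ between these generators, where idempotent compatibility constrains the possible entries, and then verify (1)--(3) by direct calculation using the concatenation rule for multiplication and the comma-insertion rule for the differential in $\AZfive$, together with \fullref{rmk:no_repeat}. One should also check that these combinatorial maps coincide with the holomorphic-polygon maps above, by matching the supporting domains, so that the homotopy equivalence produced is the geometrically meaningful one needed for \fullref{thm:main}. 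This is the content of \fullref{sec:skein_via_computation}.

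I expect the genuine obstacle to be the first (theoretical) half: setting up enough of the holomorphic polygon machinery --- transversality, and especially a clean description of the ends of the quadrilateral and pentagon moduli spaces together with their Reeb-chord breakings --- in the bordered--sutured category, and then correctly matching the extra pentagon end in condition (3) with the identity morphism. The purely combinatorial verification is routine by comparison; its only subtlety is that the diagrams $\HD_k$ are merely provincially admissible, so that finiteness of the relevant polygon counts must be argued directly (as in \fullref{rmk:complication}), and that domains of multiplicity greater than $1$ occur, requiring the refined index bookkeeping already illustrated there.
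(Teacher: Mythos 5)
You follow the paper's strategy: reduce everything to \fullref{lem:hom_alg}, then construct $(f_k, \vphi_k, \vkappa_k)$ twice --- by bordered--sutured polygon counts in \fullref{sec:skein_via_polygon} and by explicit computation from the small modules of \fullref{sec:setup} in \fullref{sec:skein_via_computation} --- with \fullref{prop:poly_comp} identifying the two. Two details of your sketch, however, do not match what actually happens, and the first would fail as written. For condition~(2), the degeneration onto the third $\beta$-circle is \emph{not} ruled out by showing a moduli space is empty or that a domain is trivial or has negative multiplicity: consecutive $\beta$-circles meet in two points, one sets $\Etaknext = \Etaknext_0 \dirsum \Etaknext_1$, and the offending term $\delta_2(m_2(\Etanextnext \tensor \Etaknext) \tensor \w)$ vanishes because the four triangle counts pair up, $m_2(\Etanextnext_j \tensor \Etaknext_i) = \Etakprev_{i+j}$, so their sum over $i,j$ is zero over $\F{2}$ --- the standard cancellation from the surgery exact triangle (\fullref{lem:m_123} and \fullref{eqn:m_2_sum}). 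For condition~(3), the paper does not define $\vkappa_k$ by pentagon counts; instead it introduces Hamiltonian translates $\betaskH$, uses the count $m_3(\EtaprevnextH \tensor \Etanextnext \tensor \Etaknext) = \ThetakH$ to show that $f_{k+2} \comp \vphi_k + \vphi_{k+1} \comp f_k$ is homotopic to the composite $\PhiHk \comp \PhikH$ of continuation maps, and then invokes Lipshitz's argument that this composite is homotopic to the identity; in the combinatorial proof one simply finds $\vkappa_k \equiv 0$. With these two repairs your argument is the paper's.
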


We will give two proofs of \fullref{prop:elementary}, using counts of bordered 
holomorphic polygons in \fullref{sec:skein_via_polygon} and by direct 
computation in \fullref{sec:skein_via_computation}. Using 
\fullref{prop:elementary}, we may prove the following generalization of 
\fullref{thm:main}:

\begin{maintheorem}
  \label{thm:general}
  Let $\Y' = (Y', \sut', \arcdiag_1 \union \arcdiag_2 \union \arcdiag, \phi')$ 
  be any bordered--sutured manifold, and let $\Yk = \Y' \glue{\sutF (\arcdiag)} 
  \Bk$. There exist type~$\DA$ homomorphisms $F_k \colon \BSDAh (\Yk) \to 
  \BSDAh (\Ynext)$ such that
  \[
    \BSDAh (\Yk) \homeq \Cone (F_{k+1} \colon \BSDAh (\Ynext) \to \BSDAh 
    (\Yprev))
  \]
  as type~$\DA$ structures.  Moreover, the homomorphisms $F_k$ and the homotopy 
  equivalence above respect the relative gradings on the bimodules in a sense 
  to be made precise in \fullref{sec:gradings}; see \fullref{thm:gradings}.
\end{maintheorem}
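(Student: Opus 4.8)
The plan is to reduce \fullref{thm:general} to the special case \fullref{prop:elementary} by applying the bordered--sutured pairing theorem \cite[Theorem~8.5.1]{Zar11:BSFH} and box-tensoring the maps $f_k$ with the invariant of $\Y'$. Concretely, the manifold $\Y'$, whose bordered boundary is parametrized by $\arcdiag_1 \union \arcdiag_2 \union \arcdiag$, carries a multimodule that is of type~$D$ along $-\arcdiag_1$, type~$A$ along $\arcdiag_2$, and type~$A$ along $\arcdiag$; gluing $\Y'$ to $\Bk$ along $\sutF(\arcdiag)$ pairs this last type~$A$ action against the type~$D$ module $\BSDh(\Bk)$ over $\A(\arcdiag)$. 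First I would invoke the pairing theorem to obtain a homotopy equivalence of type~$\DA$ bimodules
\[
  \BSDAh(\Yk) \homeq \BSDAh(\Y') \boxtensor_{\A(\arcdiag)} \BSDh(\Bk).
\]
The box tensor product on the right is defined on the nose because $\BSDh(\Bk)$ is bounded; this is immediate from the explicit formulas for $\delta$ computed in \fullref{ssec:tancomp}, since in each case the iterated $\delta$ vanishes after finitely many steps (e.g.\ $(4,6,7,8,5)^2 = 0$ for $\B_1$ by \fullref{rmk:no_repeat}).

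Next I would use \fullref{prop:elementary}, which provides type~$D$ homomorphisms $f_k \colon \BSDh(\Bk) \to \BSDh(\Bnext)$ with $\BSDh(\Bk) \homeq \Cone(f_{k+1})$, together with the fact that box-tensoring with a fixed bounded module is a homotopy functor that commutes with the formation of mapping cones: for any type~$D$ morphism $g \colon \module{M} \to \module{N}$ over $\A(\arcdiag)$ one has $\partial(\Id_{\BSDAh(\Y')} \boxtensor g) = \Id_{\BSDAh(\Y')} \boxtensor \partial g$, and
\[
  \BSDAh(\Y') \boxtensor \Cone(g) \isom \Cone\paren{\Id_{\BSDAh(\Y')} \boxtensor g}
\]
as type~$\DA$ structures, both sides being $\paren{\BSDAh(\Y') \boxtensor M} \dirsum \paren{\BSDAh(\Y') \boxtensor N}$ with matching differentials. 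Setting $F_k = \Id_{\BSDAh(\Y')} \boxtensor f_k \colon \BSDAh(\Yk) \to \BSDAh(\Ynext)$, these are type~$\DA$ homomorphisms since $\partial f_k = 0$, and chaining the equivalences gives
\[
  \BSDAh(\Yk) \homeq \BSDAh(\Y') \boxtensor \BSDh(\Bk) \homeq \BSDAh(\Y') \boxtensor \Cone(f_{k+1}) \isom \Cone(F_{k+1})
\]
as type~$\DA$ structures, which is the claim. Alternatively, one can box-tensor the full data $(f_k, \vphi_k, \vkappa_k)$ produced in the proof of \fullref{prop:elementary} and invoke \fullref{lem:hom_alg} directly; this is the form in which I would run the argument when keeping track of gradings.

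For the graded refinement I would defer to \fullref{sec:gradings}: the pairing theorem and the box tensor construction are compatible with the relative gradings, so once one knows that $f_k$, $\vphi_k$, and $\vkappa_k$ respect the gradings---which is the content of \fullref{thm:gradings}---the graded statement for $F_k$ and for the homotopy equivalence above follows by the same box-tensoring argument. I expect the only real subtlety to lie in this bookkeeping: verifying that $\Id_{\BSDAh(\Y')} \boxtensor (-)$ genuinely intertwines $\partial$ and composition and commutes with $\Cone$ in the precise module-theoretic sense required, and checking the (easy) boundedness so that all relevant $\boxtensor$-products are simultaneously defined. The holomorphic-polygon and combinatorial content all lives in \fullref{prop:elementary}; the passage from there to \fullref{thm:general} is formal.
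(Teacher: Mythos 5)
Your overall strategy is exactly the paper's: invoke the pairing theorem \cite[Theorem~8.5.1]{Zar11:BSFH} to write $\BSDAh(\Yk) \homeq \BSDAAh(\Y') \boxtensor \BSDh(\Bk)$, feed in \fullref{prop:elementary}, use the fact that $\Id \boxtensor (-)$ commutes with mapping cones (the paper cites \cite[Lemma~2.9]{LipOzsThu14:BFHBranchedDoubleSSI} for this), and apply the pairing theorem once more. The one structural point you gloss over --- that Zarev's invariant of $\Y'$ is a priori a type~$\DA$ bimodule whose type~$A$ action is over the single algebra $\A(\arcdiag_2) \tensor \AZ$, and that one must pass through the equivalence $\calF_{\DAA}$ to regard it as a type~$\DAA$ trimodule before pairing only the $\AZ$-action against $\BSDh(\Bk)$ --- is handled explicitly in the paper but is a routine categorical translation, and your phrase ``multimodule that is of type~$D$ along $-\arcdiag_1$, type~$A$ along $\arcdiag_2$, and type~$A$ along $\arcdiag$'' shows you have the right object in mind.

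There is, however, one concretely wrong step: your claim that the box tensor product is defined because $\BSDh(\Bk)$ is bounded. It is not bounded. Boundedness of a type~$D$ structure requires the iterates $\delta^k \colon M \to \A^{\tensor k} \tensor M$ to vanish for large $k$, \emph{before} any algebra multiplication is performed; the identity $(4,6,7,8,5)^2 = 0$ is a statement about the product in $\AZ$ and only verifies the structure equation, not boundedness. Since $\delta(\x) = (4,6,7,8,5) \tensor \x$ is a nonzero self-arrow, $\delta^k(\x) = (4,6,7,8,5)^{\tensor k} \tensor \x \neq 0$ in $\AZ^{\tensor k} \tensor \BSDh(\B_1)$ for every $k$ (and similarly $\delta(\z_1)$ contains the self-arrow $(5,12,3,4,6) \tensor \z_1$). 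This is consistent with the paper's remark that the diagrams $\HD_k$ are provincially admissible but not admissible. The correct fix --- and what the paper does --- is to place the boundedness hypothesis on the other factor: choose an \emph{admissible} diagram $\HD'$ for $\Y'$, so that $\BSDAAh(\Y')$ is bounded and the box tensor products are defined. With that substitution your argument goes through verbatim, including the graded refinement deferred to \fullref{sec:gradings}.
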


\begin{proof}[Proof of \fullref{thm:general}, without gradings]
  To the bordered--sutured manifold $\Y'$, Zarev \cite{Zar11:BSFH} associates 
  (the homotopy type of) a type~$\DA$ bimodule
  \[
    \itensor[^{\A (-\arcdiag_1)}]{\BSDAh (\Y')}{_{\A (\arcdiag_2 \union 
        \arcdiag)}} = \itensor[^{\A (-\arcdiag_1)}]{\BSDAh (\Y')}{_{\A 
        (\arcdiag_2) \tensor \AZ}}
  \]
  By choosing an admissible diagram $\HD'$ for $\Y'$, we may assume $\BSDAh 
  (\Y')$ to be bounded.
  
  Unlike in the familiar setting for $\dg$ modules, the $\dg$ category 
  $\itensor{\Mod}{_{\A_1 \tensor \A_2}}$ of (right) type~$A$ modules over $\A_1 
  \tensor \A_2$ is not the same as the $\dg$ category $\itensor{\Mod}{_{\A_1, 
      \A_2}}$
  of (right-right) type~$\AA$ bimodules over $\A_1$ and $\A_2$.  There is, 
  nonetheless, an equivalence
  \[
    \calF_{\AA} \colon \HC (\itensor{\Mod}{_{\A_1 \tensor \A_2}}) \to \HC 
    (\itensor{\Mod}{_{\A_1, \A_2}})
  \]
  between the homotopy categories; see 
  \cite[Proposition~2.4.11]{LipOzsThu15:BFHBimodules} and 
  \cite[Section~8.3]{Zar11:BSFH}.  In our context, there is an analogous 
  functor $\calF_{DAA}$ that allows us to view $\BSDAh (\Y')$ as a type~$\DAA$ 
  trimodule
  \[
    \itensor[^{\A (-\arcdiag_1)}]{\BSDAAh (\Y')}{_{\A (\arcdiag_2), \AZ}} = 
    \calF_{\DAA} \paren{\itensor[^{\A (-\arcdiag_1)}]{\BSDAh (\Y')}{_{\A 
          (\arcdiag_2) \tensor \AZ}}},
  \]
  allowing us to write
  \begin{align*}
    \BSDAh (\Yk) & \simeq \BSDAAh (\Y') \boxtensor \BSDh (\Bk)\\
    & \simeq \BSDAAh (\Y') \boxtensor \Cone (f_{k+1} \colon \BSDh (\Bnext) \to 
    \BSDh (\Bprev))\\
    & \simeq \Cone (\Id_{\BSDAAh (\Y')} \boxtensor f_{k+1} \colon \BSDAAh (\Y') 
    \boxtensor \BSDh (\Bnext) \to \BSDAAh (\Y') \boxtensor \BSDh (\Bprev))\\
    & \simeq \Cone (F_{k+1} \colon \BSDAh (\Ynext) \to \BSDAh (\Yprev))
  \end{align*}
  for some type~$\DA$ homomorphism $F_{k+1}$, where we have respectively used 
  the pairing theorem \cite[Theorem~8.5.1]{Zar11:BSFH}, 
  \fullref{prop:elementary}, 
  \cite[Lemma~2.9]{LipOzsThu14:BFHBranchedDoubleSSI}, and the pairing theorem 
  again.
\end{proof}

\begin{proof}[Proof of \fullref{thm:main}, without gradings]
  Define $Y' = Y_k \setminus B^3$, which is a compact $3$-manifold with 
  boundary. Considering $\sut_k$, note that by the definition of a 
  bordered--sutured structure, there are sutures on each boundary component of 
  $Y_k$; thus, we may isotope each $\sut_k$ such that it coincides with 
  $\mersut_k$ in the interior of $B^3$.  Let $\sut' = (\sut_k \intersect Y') 
  \union (- \mersut_k \intersect \bdy \closure{B^3})$, and let $\phi' \colon G 
  (\arcdiag_1 \union \arcdiag_2 \union \arcdiag) \into \bdy Y'$ be the map 
  obtained by setting $\phi'|_{G (\arcdiag_1 \union \arcdiag_2)} = \phi_k$ and 
  defining $\phi'|_{G (\arcdiag)}$ to be $\elparam_k$ with the opposite 
  orientation.  (Recall that, with the codomain restricted to the image, 
  neither $\phi_k$ nor $\elparam_k$ depends on $k$.) Then $\Y' = (Y', \sut', 
  \arcdiag_1 \union \arcdiag_2 \union \arcdiag, \phi')$ is a bordered--sutured 
  manifold, such that $\Yk = \Y' \glue{\sutF (\arcdiag)} \Bk$. The theorem now 
  follows from \fullref{thm:general}.
\end{proof}

\begin{proof}[Proof of \fullref{cor:arbitrary}, without gradings]
  For $Y$ without boundary and $\arcdiag_1 = \arcdiag_2 = \eset$, the 
  bordered--sutured manifold $\Yk = (Y_k, \sut_k, \eset, \phi_k)$ is simply a 
  sutured manifold $(Y_k, \sut_k)$ (where $Y_k = Y \setminus \nbhd{L_k}$), and 
  the type~$\DA$ structure $\BSDAh (\Yk)$ reduces to the sutured Floer chain 
  group $\SFC (Y_k, \sut_k)$; see, for example, \cite[Section~9.1]{Zar11:BSFH}.  
  Note that this is true for all $\set{\sut_k}_{k \in \set{\infty, 0, 1}}$ 
  satisfying the conditions described in \fullref{sec:introduction}; below, we 
  choose one such set to prove our corollary.

  Observe that $\elT_k \subset L_k$ intersects exactly one component of $L_k$ 
  for two values $k_1$, $k_2$ of $k \in \set{\infty, 0, 1}$, and intersects two 
  components of $L_k$ for the other value $k_3$; we shall refer to these link 
  components as the \emph{special link components}. Noting that each component 
  of $\bdy Y_k$ corresponds to a link component of $L_k$, choose $\set{\sut_k}$ 
  such that
  \begin{itemize}
    \item For each point $p$ in $\bdy \elT_k$, there is a distinct meridional 
      suture in $\sut_k$ that is a push-off of the meridian of $p$ into the 
      interior of $\bdy Y_k \intersect Y'$;
    \item There are no other sutures on the components of $\bdy Y_k$ that 
      correspond to the special link components; and
    \item There is exactly one pair of meridional sutures on each component of 
      $\bdy Y_k$ that does not correspond to one of the special link 
      components.
  \end{itemize}
  With this choice, $(Y_k, \sut_k)$ is exactly the link complement with one 
  pair of meridional sutures for each link component, and an additional pair of 
  meridional sutures for the special link components when $k \in \set{k_1, 
    k_2}$. By \cite[Proposition~9.2]{Juh06:SFH} and an easy extension of 
  \cite[Proposition~2.3]{ManOzsSar09:GH} to links in arbitrary $3$-manifolds, 
  we see that
  \[
    \SFH (Y_k, \sut_k) \isom
    \begin{cases}
      \HFKh (Y, L_k) \tensor V & \text{if } k = k_1, k_2,\\
      \HFKh (Y, L_k) & \text{if } k = k_3.
    \end{cases}
  \]

  Applying \fullref{thm:main} to $\SFC (Y_k, \sut_k)$, the mapping cone of 
  chain complexes naturally gives rise to a short exact sequence, and hence a 
  long exact triangle on homology.
\end{proof}

%%%%%%%%%%%%%%%%%%%%%%%%%%%%%%%%%%%%%%%%%%%%%%%%%%%%%%%
\section{The skein relation via holomorphic polygon counts}
\label{sec:skein_via_polygon}
%%%%%%%%%%%%%%%%%%%%%%%%%%%%%%%%%%%%%%%%%%%%%%%%%%%%%%%

In this section, we give a theoretical proof of \fullref{prop:elementary} using 
a technique of counting holomorphic polygons in the bordered setting, developed 
by Lipshitz, Ozsv\'ath, and Thurston \cite{LipOzsThu16:BFHBranchedDoubleSSII}.  
We shall use these polygon counts to define type~$D$ morphisms, which satisfy a 
type~$D$ version of some well-known $\Ainf$-relations. That the morphisms 
satisfy the conditions in \fullref{lem:hom_alg} will then be an easy 
consequence.

\subsection{Polygon counting in bordered--sutured Floer theory}
\label{ssec:bordered_polygons}

Lipshitz, Ozsv\'ath, and Thurston 
\cite[Section~4]{LipOzsThu16:BFHBranchedDoubleSSII} describe a theory of 
holomorphic polygon counting for bordered Floer homology.  The same technique 
translates to the bordered--sutured Floer setting with small changes.  Below, we 
outline the important definitions and results in this direction.

\begin{definition}[cf.\ 
  {\cite[Definition~4.1]{LipOzsThu16:BFHBranchedDoubleSSII}}]
  Let $\Sigma$ be a compact, oriented surface with boundary. Fix the data 
  $(\arcdiag, \psi)$, where $\arcdiag$ is an arc diagram and $\psi \colon G 
  (\arcdiag) \to \Sigma$ is an orientation-reversing embedding as in 
  \cite[Definition~4.1]{Zar11:BSFH}. (See \cite[Definition~2.4]{Zar11:BSFH} for 
  the definition of $G (\arcdiag)$, and of $e_i$ below.) A \emph{set of 
    bordered attaching curves compatible with $\arcdiag$} is a collection 
  $\alphas = \alphas^a \union \alphas^c$ of curves in $\Sigma$ such that
  \begin{itemize}
    \item The collection $\alphas^a = \set{\alpha_1^a, \dotsc, \alpha_q^r}$ 
      consists exactly of the arcs $\alpha_i^a = \psi (e_i)$;
    \item The collection $\alphas^c = \set{\alpha_1^c, \dotsc, \alpha_r^s}$ 
      consists of simple closed curves in $\Int (\Sigma)$;
    \item The curves in $\alphas$ are pairwise disjoint; and
    \item The map $\pi_0 (\bdy \Sigma \setminus \orarcs) \to \pi_0 (\Sigma 
      \setminus \alphas)$ is surjective.
  \end{itemize}
\end{definition}

\begin{definition}[cf.\ 
  {\cite[Definition~3.1]{LipOzsThu16:BFHBranchedDoubleSSII}}]
  Let $\Sigma$ be a compact, oriented surface with boundary. A \emph{set of 
    attaching circles} is a collection $\betas = \set{\beta_1, \dotsc, 
    \beta_t}$ of simple closed curves in $\Int (\Sigma)$ such that
  \begin{itemize}
    \item The curves in $\betas$ are pairwise disjoint; and
    \item The map $\pi_0 (\bdy \Sigma) \to \pi_0 (\Sigma \setminus \betas)$ is 
      surjective.
  \end{itemize}
\end{definition}

\begin{definition}[cf.\ 
  {\cite[Definition~4.2]{LipOzsThu16:BFHBranchedDoubleSSII}}]
  A \emph{bordered--sutured Heegaard multi-diagram} is the data $(\Sigma, 
  \alphas, \set{\betas^k}_{k=1}^m, \arcdiag, \psi)$, where $\alphas$ is a set 
  of bordered attaching curves compatible with $\arcdiag$ in $\Sigma$, and  
  $\set{\betas^k}_{k=1}^m$ is some $m$-tuple of sets of $t$ attaching circles, 
  for some fixed $t$.  Without loss of generality, we may assume that all 
  curves involved are pairwise transverse, and that there are no triple 
  intersection points.  For the sake of economy, we often omit the word 
  ``Heegaard''.

  A \emph{generalized multi-periodic domain} is a relative homology class $B 
  \in H_2 (\Sigma, \alphas \union \betas^1 \union \dotsb \union \betas^m \union 
  \bdy \Sigma)$ whose boundary $\bdy B$, viewed as an element of
  \[
    H_1 (\alphas \union \betas^1 \union \dotsb \union \betas^m \union \bdy 
    \Sigma, \bdy \Sigma) \isom H_1 (\alphas \union \betas^1 \union \dotsb 
    \union \betas^m, \matchedpts),
  \]
  is contained in the image of the inclusion
  \[
    H_1 (\alphas, \matchedpts) \dirsum H_1 (\betas^1) \dirsum \dotsb \dirsum 
    H_1 (\betas^m) \to H_1 (\alphas \union \betas^1 \union \dotsb \union 
    \betas^m, \matchedpts).
  \]
  It is a \emph{multi-periodic domain} if all of its local multiplicities at 
  the regions adjacent to $\bdy \Sigma \setminus \orarcs$ are zero. A 
  multi-periodic domain is \emph{provincial} if all of its local multiplicities 
  at the regions adjacent to $\bdy \Sigma$ are zero. The bordered--sutured 
  multi-diagram $(\Sigma, \alphas, \set{\betas^k}_{k=1}^m, \arcdiag, \psi)$ is 
  \emph{admissible} (resp.\ \emph{provincially admissible}) if every non-zero 
  multi-periodic domain (resp.\ non-zero provincial multi-periodic domain) has 
  both positive and negative local multiplicities.
\end{definition}

Given a bordered--sutured multi-diagram, one could always find an 
\emph{admissible collection of almost-complex structures} $\set{J_j}_{j \in 
  \Conf (D_n)}$; see \cite[Definition~4.4]{LipOzsThu16:BFHBranchedDoubleSSII}.  
($\Conf (D_n)$ denotes the moduli space of complex structures on the disk $D_n$ 
with $n$ labeled punctures on its boundary, i.e.\ an $n$-sided polygon.) We fix 
this data.

If $(\Sigma, \alphas, \set{\betas^k}_{k=1}^m, \arcdiag, \psi)$ is a 
provincially admissible bordered--sutured multi-diagram, then for all $k$, the 
bordered--sutured diagram $\HD_k = (\Sigma, \alphas, \betas^k, \arcdiag, \psi)$ 
is provincially admissible in the sense of \cite[Definition~4.10]{Zar11:BSFH}. 
Thus, we may consider the type~$D$ structure $\BSDh (\HD_k)$; we denote this 
type~$D$ structure by $\BSDh (\alphas, \betas^k)$. (For the expert, the 
implicit choice of a suitable almost-complex structure is specified by the data 
$\set{J_j}$ here.) Observe further that $\HD_{k_1, k_2} = (\Sigma, 
\betas^{k_1}, \betas^{k_2})$ in fact is a sutured Heegaard diagram that is 
admissible in the sense of \cite[Definition~3.11]{Juh06:SFH}; we denote by 
$\SFC (\betas^{k_1}, \betas^{k_2})$ its sutured Floer chain complex $\SFC 
(\HD_{k_1, k_2})$.

The discussion in \cite[Section~4]{LipOzsThu16:BFHBranchedDoubleSSII} then 
carries over to the present context, allowing us to define maps by polygon 
counts. More precisely, if $(\Sigma, \alphas, \set{\betas^k}_{k=1}^m, 
-\arcdiag, \psi)$ is a provincially admissible bordered--sutured multi-diagram, 
we obtain maps
\[
  \delta_n \colon \SFC (\betas^{k_{n-1}}, \betas^{k_n}) \tensor \dotsb \tensor 
  \SFC (\betas^{k_{1}}, \betas^{k_2}) \tensor \BSDh (\alphas, \betas^{k_1})\to 
  \AZ \tensor \BSDh (\alphas, \betas^{k_n}),
\]
defined, for generators $\Eta^{k_i} \in \genset (\betas^{k_{i}}, 
\betas^{k_{i+1}})$ and $\x \in \genset (\alphas, \betas^{k_1})$, by
\[
  \delta_n (\Eta^{k_{n-1}} \tensor \dotsb \tensor \Eta^{k_1} \tensor \x) = 
  \sum_{\substack{
      \y \in \genset (\alphas, \betas^{k_n})\\
      \source, \text{ discrete } \vec{P}\\
      \ind (B, \vec{\rho} (\vec{P})) = 2 - n
    }}
  \args{\# \embmoduli^B (\y, \Eta^{k_{n-1}}, \dotsc, \Eta^{k_{1}}, \x; \source; 
    \vec{P})}
  a \args{-\vec{\rho} (\vec{P})} \tensor \y.
\]
Here, $\embmoduli^B (\y, \Eta^{k_{n-1}}, \dotsc, \Eta^{k_{1}}, \x; \source; 
\vec{P})$ denotes the moduli space of pairs $(j, u)$, where $j$ is a complex 
structure on $D_{n+1}$, and $u$ is an embedded $J_j$-holomorphic representative 
of the homology class $B \in \pi_2 (\y, \Eta^{k_{n-1}}, \dotsc, \Eta^{k_{1}}, 
\x)$, with decorated source $\source$, and compatible with a discrete ordered 
partition $\vec{P}$ of the $e$-punctures of $\source$. Note that $\delta_1$ is 
the usual structure map on $\BSDh (\alphas, \betas^{k_1})$.

\begin{remark}
  \label{rmk:order}
  We have chosen to follow \cite{LipOzsThu16:BFHBranchedDoubleSSII} in 
  presenting the arguments of $\delta_n$ in the order above. As a result, the 
  generator arguments of the moduli spaces $\moduli$ are presented in the order 
  reverse to that in \cite{Zar11:BSFH}. See \cite[Convention~3.4 and 
  Remark~4.22]{LipOzsThu16:BFHBranchedDoubleSSII} for some justification.
\end{remark}

Recall that in the non-bordered case, there are well-known maps
\[
  m_n \colon \SFC (\betas^{k_{n-1}}, \betas^{k_{n}}) \tensor \dotsb \tensor 
  \SFC (\betas^{k_{1}}, \betas^{k_{2}}) \tensor \SFC (\betas^{k_{0}}, 
  \betas^{k_{1}}) \to \SFC (\betas^{k_{0}}, \betas^{k_{n}}),
\]
defined by holomorphic polygon counts, which are well known to satisfy some 
$\Ainf$-relations; see, for example, \cite{Sei08:FukBook} and 
\cite{FukOhOht09:LFHBookI,FukOhOht09:LFHBookII}. One may view the map $m_n$ as 
a special case of $\delta_n$, where $\alphas = \betas^{k_{0}}$ is a set of 
bordered attaching curves compatible with the empty arc diagram $\arcdiag = 
\eset$, and $\AZ = \A (\eset) = \F{2}$. Conversely, $\delta_n$ can be viewed as 
the bordered version of $m_n$. Note also that $m_1$ is the usual differential 
on $\SFC (\betas^{k_0}, \betas^{k_1})$.

The map $\delta_n$ can be naturally extended to a map
\[
  \deltat_n \colon \SFC (\betas^{k_{n-1}}, \betas^{k_n}) \tensor \dotsb \tensor 
  \SFC (\betas^{k_{1}}, \betas^{k_2}) \tensor \AZ \tensor \BSDh (\alphas, 
  \betas^{k_1})\to \AZ \tensor \BSDh (\alphas, \betas^{k_n})
\]
defined by
\[
  \deltat_n (\Eta^{k_{n-1}} \tensor \dotsb \tensor \Eta^{k_{1}} \tensor a 
  \tensor \x) = \paren{\mu (a, \blank) \tensor \id_{\BSDh (\alphas, 
      \betas^{k_n})}} \comp \delta_n (\Eta^{k_{n-1}} \tensor \dotsb \tensor 
  \Eta^{k_{1}} \tensor \x).
\]
Then the result we need, by adapting from 
\cite[Section~4]{LipOzsThu16:BFHBranchedDoubleSSII}, is the following 
proposition, which is the bordered version of the $\Ainf$-relations for $m_n$.

\begin{proposition}[cf.\ 
  {\cite[Proposition~4.23]{LipOzsThu16:BFHBranchedDoubleSSII}}]
  \label{prop:ainf}
  The holomorphic polygon counts defined above satisfy the $\Ainf$-relations:
  \begin{equation*}
    \begin{split}
      0& = \sum_{1 \leq p \leq n} \deltat_{n-p+1} (\Eta^{k_{n-1}} \tensor 
      \dotsb \tensor \Eta^{k_{p}} \tensor \delta_p (\Eta^{k_{p-1}} \tensor 
      \dotsb \tensor \Eta^{k_{1}} \tensor \x))\\
      & \quad + \sum_{1 \leq p < q \leq n} \delta_{n-q+p+1} (\Eta^{k_{n-1}} 
      \tensor \dotsb \tensor \Eta^{k_{q}} \tensor m_{q-p} (\Eta^{k_{q-1}} 
      \tensor \dotsb \tensor \Eta^{k_{p}}) \tensor \Eta^{k_{p-1}} \tensor 
      \dotsb \tensor \Eta^{k_{1}} \tensor \x)\\
      & \quad + \paren{d \tensor \id_{\BSDh (\alphas, \betas^{k_n})}} \comp 
      \delta_n (\Eta^{k_{n-1}} \tensor \dotsb \tensor \Eta^{k_{1}} \tensor \x).
    \end{split}
  \end{equation*}
\end{proposition}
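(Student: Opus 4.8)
The plan is to establish \fullref{prop:ainf} by the same degeneration-of-moduli-spaces argument that proves the ordinary $\Ainf$-relations for holomorphic polygon counts, transplanted into the bordered--sutured setting. The starting point is a single index-$(3-n)$ moduli space $\embmoduli^B(\y, \Eta^{k_{n-1}}, \dotsc, \Eta^{k_1}, \x; \source; \vec P)$ of holomorphic $(n{+}1)$-gons; I would argue that it is a compact $1$-manifold with boundary, and then enumerate the codimension-one degenerations that constitute its boundary. Counting the ends of this $1$-manifold modulo $2$ and noting that the total count vanishes yields exactly the three types of terms in the stated identity: two-story buildings in which an interior $\alpha$-$\beta$ polygon breaks off (giving the $\deltat_{n-p+1}(\dotsb \delta_p(\dotsb))$ terms and, when the broken-off piece is a bigon, the $(d \tensor \id)\comp\delta_n$ term after absorbing the algebra differential), and two-story buildings in which a $\beta$-$\beta$ polygon breaks off at the source side (giving the $\delta_{n-q+p+1}(\dotsb m_{q-p}(\dotsb)\dotsb)$ terms). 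The bookkeeping of which degeneration produces which term is identical to \cite[Proof of Proposition~4.23]{LipOzsThu16:BFHBranchedDoubleSSII}; the only genuinely new input is that some ends involve Reeb chords escaping to $\pm\infty$ along the east boundary, and these are handled exactly as in the bordered pairing theorem, contributing the algebra-multiplication structure encoded in $\deltat_n$ and the $\mu(a,\blank)$ in its definition.

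More concretely, the steps I would carry out are: (i) recall that an admissible collection of almost-complex structures $\set{J_j}_{j\in\Conf(D_n)}$ has been fixed and that, by provincial admissibility of the bordered--sutured multi-diagram together with the index constraint $\ind(B,\vec\rho(\vec P)) = 3-n$, only finitely many homology classes $B$ and decorated sources $\source$ contribute, so the relevant moduli spaces are finite-dimensional and, by the usual transversality arguments carried over from \cite[Section~4]{LipOzsThu16:BFHBranchedDoubleSSII} and \cite[Chapter~5]{Zar11:BSFH}, smooth manifolds of the expected dimension; (ii) invoke the Gromov-type compactification: the ends of the $1$-dimensional strata consist of \emph{holomorphic combs} in the sense of bordered Floer theory, whose interior levels are $\alpha$-$\beta$ curves and $\beta$-$\beta$ curves and whose east-infinity levels are trivial-strip buildings recording Reeb chord collisions; (iii) match each end with a term. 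Degenerations where a collection of Reeb chords comes together at east infinity are precisely what upgrades $\delta_{n-p+1}$ to $\deltat_{n-p+1}$ in the first family of terms; this is the translation of \cite[Convention~3.4]{LipOzsThu16:BFHBranchedDoubleSSII} and of the $\Ainf$-module relations from \cite[Section~2]{LipOzsThu15:BFHBimodules} into the current language. One must also check that there are no \emph{boundary degenerations} (disks with boundary entirely on the $\alpha$'s or entirely on a single $\betas^{k_i}$): this follows from provincial admissibility and the fact that our $\alphas$ contains no $\alpha$-circles, so the only such domains would be periodic and hence forbidden.

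I would then simply cite \cite[Section~4]{LipOzsThu16:BFHBranchedDoubleSSII} for the bulk of the analytic infrastructure — the definitions of $\Conf(D_n)$, of admissible almost-complex structures, of the moduli spaces $\embmoduli^B$ with their decorated sources and ordered partitions, the expected-dimension formula, the transversality and Gromov compactness statements — observing that none of these uses anything about the sutured side beyond what is already set up in \cite[Sections~4--5]{Zar11:BSFH}, so the transplantation is formal. The proof of \fullref{prop:ainf} is therefore: \emph{The moduli space of $(n{+}1)$-gons with $\ind(B,\vec\rho(\vec P)) = 3-n$ is a compact $1$-manifold whose boundary is identified, via the standard degeneration analysis of \cite[Section~4]{LipOzsThu16:BFHBranchedDoubleSSII} adapted verbatim to the bordered--sutured setting, with the set of terms appearing on the right-hand side of the displayed equation; since the signed (here, mod-$2$) count of boundary points of a compact $1$-manifold is zero, the identity follows.}

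\medskip

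\emph{The main obstacle} I anticipate is not any single hard estimate but rather verifying that the $\Ainf$-module structure on the $\SFC$ side interacts correctly with the east-infinity Reeb chord degenerations — i.e.\ that when several Reeb chords collide the resulting contribution is genuinely $\mu(a,\blank)$ applied to the output of a lower $\delta$, and not some more complicated bracket. In the bordered Heegaard Floer setting this is exactly \cite[Proposition~4.23]{LipOzsThu16:BFHBranchedDoubleSSII}, and the reason it goes through is that the $\beta$-$\beta$ Heegaard diagrams $\HD_{k_i,k_{i+1}}$ are genuinely \emph{sutured} (closed on the east side), so the $m_n$ carry no Reeb-chord data and all the east-infinity behavior is concentrated on the single $\alpha$-$\beta$ level; once this is observed, the combinatorics of \fullref{rmk:order} (the order-reversal convention) ensures everything lines up. A secondary, purely expository, point to be careful about is that $\delta_1$ and $m_1$ are the respective differentials, so that the $p=n$, $p=1$, and $q-p=1$ boundary cases of the general identity reproduce the type~$D$ structure equation, the usual chain-complex identity, and the $\Ainf$-module compatibility — this is what makes \fullref{prop:ainf} the correct ``bordered version of the $\Ainf$-relations for $m_n$.''
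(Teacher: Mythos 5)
Your proposal is correct and is essentially the paper's own proof: the paper offers no independent argument for \fullref{prop:ainf}, asserting only that the degeneration analysis of \cite[Section~4]{LipOzsThu16:BFHBranchedDoubleSSII} carries over to the bordered--sutured setting, and your compactness-plus-ends-of-one-dimensional-moduli-spaces argument is precisely that adaptation, including the key observation that the $\beta$--$\beta$ diagrams are honest sutured diagrams so the $m_n$ carry no Reeb-chord data. One bookkeeping slip worth noting: the term $\paren{d \tensor \id} \comp \delta_n$ arises from east-infinity degenerations in which two levels of the ordered partition $\vec{P}$ collide (producing the differential on $\AZ$), not from a bigon splitting off in a two-story building, and the upgrade from $\delta_{n-p+1}$ to $\deltat_{n-p+1}$ records the multiplication of the algebra outputs of the two stories of a two-story end rather than Reeb-chord collisions --- but since you defer the precise matching of ends to terms to \cite{LipOzsThu16:BFHBranchedDoubleSSII}, this misattribution does not affect the validity of the argument.
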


The maps $\delta_n$ have analogous versions for type~$A$ modules and for 
bimodules of various types. For example, suppose $(\Sigma, \alphas, 
\set{\betask}_{k=1}^m, \arcdiag_1 \union \arcdiag_2, \psi)$ is a provincially 
admissible bordered--sutured multi-diagram, and let $\HD_k = (\Sigma, \alphas, 
\betask, \arcdiag_1 \union \arcdiag_2, \psi)$; denoting by $\BSDAh (\alphas, 
\betas^k)$ the type~$\DA$ structure
\[
  \itensor[^{\A (- \arcdiag_1)}]{\BSDAh (\HD_k)}{_{\A (\arcdiag_2)}},
\]
there are maps
\[
  \begin{split}
    \itensor[_n]{\delta}{_\ell} & \colon \SFC (\betas^{k_{n-1}}, \betas^{k_n}) 
    \tensor \dotsb \tensor \SFC (\betas^{k_1}, \betas^{k_2}) \tensor \BSDAh 
    (\alphas, \betas^{k_1}) \tensor \underbrace{\A (\arcdiag_2) \tensor \dotsb 
      \tensor \A (\arcdiag_2)}_\ell\\
    & \quad \to \A (- \arcdiag_1) \tensor \BSDAh (\alphas, \betas^{k_n}),
  \end{split}
\]
defined by holomorphic polygon counts, which satisfy $\Ainf$-relations similar 
to those in \fullref{prop:ainf}. We will not need these generalized maps in 
this section, but in the proof of \fullref{thm:agree} in 
\fullref{sec:comparison}, we will use the analogous maps for type~$\DAA$ 
trimodules, which are similarly defined.

\subsection{Counting provincial polygons}
\label{ssec:prov_polygons}

Before we define the type~$D$ morphisms needed in \fullref{lem:hom_alg}, we 
prepare ourselves by counting some provincial polygons and computing the 
associated maps $m_n$.

%%%%%%%%%%%%%%%%%%%%%%%%%%%%%%%%%%%%%%%%%%%%%%%%%%%%%%%
\begin{figure}[!htbp]
	\labellist
	\scriptsize\hair 2pt
  \pinlabel $\etainfty_1$ at 66 133
  \pinlabel $\etazeroH_1$ at 134 124
  \pinlabel $\etazero_1$ at 148 107
  \pinlabel $\etaone_1$ at 164 107
  \pinlabel $\etaHone_1$ at 180 117
  \pinlabel $\etaHone_0$ at 112 114
  \pinlabel $\thetaoneH$ at 102 91
  \pinlabel $\thetaHone$ at 100 68
  \pinlabel $\etaone_0$ at 127 103
  \pinlabel $\etainfty_0$ at 143 82
  \pinlabel $\etazeroH_0$ at 105 45
  \pinlabel $\etazero_0$ at 125 70
	
	\endlabellist
  \includegraphics[scale=1.0]{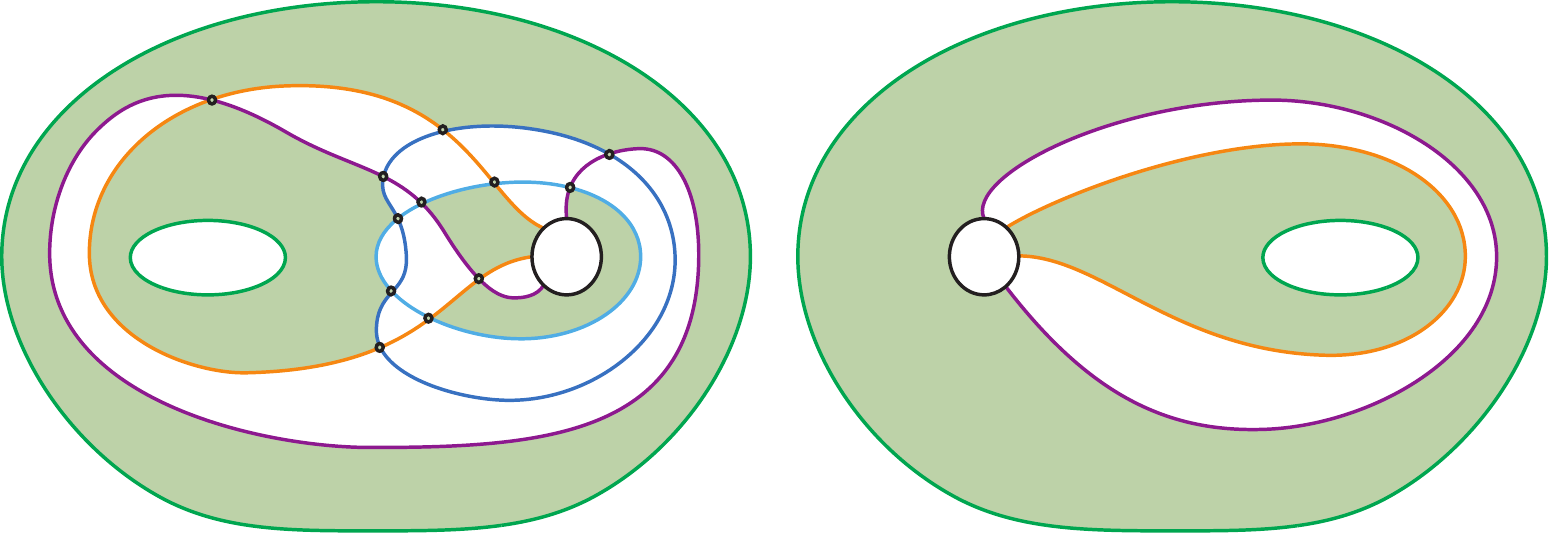}
  \caption{The sutured multi-diagram $\HDbH = (\Sigma, \set{\betask, 
      \betaskH}_{k \in \set{\infty, 0, 1}}$).}
 	\label{fig:mult_hd}
\end{figure}
%%%%%%%%%%%%%%%%%%%%%%%%%%%%%%%%%%%%%%%%%%%%%%%%%%%%%%%

Consider the bordered--sutured diagrams $\HD_k = (\Sigma, \alphas, \betask, - 
\arcdiag, \psi)$ in \fullref{ssec:tancomp}, where $k \in \set{\infty, 0, 1}$.  
We shall view $\betasone$, $\betasinfty$, and $\betaszero$ as sets of attaching 
circles on $\Sigma$, and combine them into one sutured multi-diagram as in 
\fullref{fig:mult_hd}. Without loss of generality, we may assume, as in 
\fullref{fig:mult_hd}, that the circles $\betak$ intersect transversely, 
pairwise, at two points; we label the two intersection points between $\betak$ 
and $\betanext$ by $\etak_0$ and $\etak_1$. (The choice of which intersection 
point is $\etak_0$ and which is $\etak_1$, as indicated in 
\fullref{fig:mult_hd}, is made to simpify the notation in \fullref{lem:m_123} 
below, but is itself not material in the proof of \fullref{prop:elementary}.) 
Each $\etak_i$ is associated to \emph{two} generators, one in $\genset 
(\betask, \betasnext)$ and one in $\genset (\betasnext, \betask)$; we denote 
the former by $\Etaknext_i$ and the latter by $\Etanextprev_i$.

For each $k \in \set{\infty, 0, 1}$, let $\betakH$ be a small Hamiltonian 
perturbation of $\betak$, such that
\begin{itemize}
  \item $\betak$ and $\betakH$ intersect transversely at two points $\thetakH$ 
    and $\thetaHk$, both of which lie on the boundary $\bdy p$ of the embedded 
    triangle $p$ with vertices $\etaone_0$, $\etainfty_0$, and $\etazero_0$; 
    and
  \item $\betakH$ intersects $\betanext$ (resp.\ $\betaprev$) transversely 
    close to $\betak \intersect \betanext$ (resp.\ $\betak \intersect 
    \betaprev$).
\end{itemize}
Let $\betaskH = \set{\betakH}$. In \fullref{fig:mult_hd}, we have drawn 
$\betasoneH$; to prevent cluttering our diagrams, however, we will not draw 
$\betasinftyH$ or $\betaszeroH$ here or in the sequel. As a consequence, 
whenever an argument below involves $\betaskH$ (and its intersection points 
with other curves), the reader is invited to check the veracity of the argument 
for the case $k = 1$, and to observe that the cases $k = \infty$ and $k = 0$ 
are entirely analogous.

We now fix the notation for the intersection points on $\betakH$. First, we 
label the intersection points between $\betak$ and $\betanextH$ such that 
$\etakH_i$ is close to $\etak_i$. Second, note that $(\Sigma, \betask, 
\betaskH)$ is a sutured Heegaard diagram for the complement of a $2$-component 
unlink in $S^3$, with a pair of meridional sutures on each unlink component.  
This means that
\[
  \SFC (\betask, \betaskH) \isom \CFKh (S^3, 2\text{-component unlink}),
\]
and so is generated by two generators whose (relative) gradings differ by $1$.  
Denote by $\thetakH$ the intersection point between $\betak$ and $\betakH$ that 
corresponds to the top-graded generator $\ThetakH \in \genset (\betask, 
\betaskH)$. The other intersection point, which we denote by $\thetaHk$, will 
then correspond to the top-graded generator $\ThetaHk \in \genset (\betaskH, 
\betask)$.

It is easy to check that the sutured multi-diagram $\HDbH = (\Sigma, 
\set{\betask, \betaskH}_{k \in \set{\infty, 0, 1}})$ is admissible. We now 
compute some $\Ainf$-relations associated to $\HDbH$. 

\begin{lemma}
  \label{lem:m_123}
  For $k \in \set{\infty, 0, 1}$ and $i, j, \ell \in \cycgrp{2}$, the maps 
  $m_n$ associated to $\HDbH$ satisfy:
  \begin{gather}
    m_1 \equiv 0; \label{eqn:m_1}\\
    m_2 (\Etanextnext_j \tensor \Etaknext_i) = \Etakprev_{i+j}; 
    \label{eqn:m_2}\\
    m_2 (\EtanextnextH_j \tensor \Etaknext_i) = \EtakprevH_{i+j}; 
    \label{eqn:m_2H}\\
    m_2 (\ThetaHk \tensor \EtaprevnextH_i) = \Etaprevnext_i; \label{eqn:m_3a}\\
    m_3 (\ThetaHk \tensor \EtaprevnextH_j \tensor \Etanextnext_i) = 0; 
    \label{eqn:m_3b}\\
    m_3 (\EtaprevnextH_\ell \tensor \Etanextnext_j \tensor \Etaknext_i) = 
    \begin{cases}
      \ThetakH & \text{if } i = j = \ell = 0,\\
      0 & \text{otherwise.}
    \end{cases} \label{eqn:m_3c}
  \end{gather}
\end{lemma}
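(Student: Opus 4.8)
The plan is to prove all six identities by directly enumerating the contributing holomorphic polygons, using that $\HDbH$ is admissible and that the relevant domains are embedded, low-area polygons whose moduli spaces are pinned down by the Riemann mapping theorem. Throughout I would work in the explicit picture of \fullref{fig:mult_hd}, reading off domains and local multiplicities directly, much as in the module computations of \fullref{ssec:tancomp}.

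For \eqref{eqn:m_1}: each relevant two-fold sub-diagram --- one of $(\Sigma, \betas^{k_1}, \betas^{k_2})$, $(\Sigma, \betask, \betaskH)$, or $(\Sigma, \betaskH, \betask)$ --- has exactly two generators, whose relative Maslov gradings differ by $1$ (for the Hamiltonian pairs this is the identification $\SFC(\betask, \betaskH) \isom \CFKh(S^3, 2\text{-component unlink})$ recorded above, and the remaining pairs are treated the same way). Since the corresponding sutured Floer homology is nonzero, a nonzero $m_1$ is impossible; alternatively, one checks directly from \fullref{fig:mult_hd} that no positive index-$1$ domain disjoint from $\bdy \Sigma \setminus \orarcs$ joins the two generators. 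Either way $m_1 \equiv 0$.

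The identities \eqref{eqn:m_2}, \eqref{eqn:m_2H}, \eqref{eqn:m_3a} are index-$0$ triangle counts. For each, I would identify the homotopy class of index-$0$ triangles with the two prescribed incoming vertices: inspecting \fullref{fig:mult_hd}, its domain is a single embedded triangle, with all local multiplicities in $\set{0,1}$ and disjoint from $\bdy \Sigma \setminus \orarcs$, so by the Riemann mapping theorem it has exactly one holomorphic representative, while no other class with the same incoming vertices has everywhere-nonnegative multiplicities. The only point needing care is the outgoing vertex. For \eqref{eqn:m_2} and \eqref{eqn:m_2H}, the small triangle spanned by $\Etaknext_i$ and $\Etanextnext_j$ closes up at $\Etakprev_{i+j}$; that $i+j$ is the correct outgoing subscript is exactly the content of the labelling convention fixed in \fullref{fig:mult_hd} for the points $\etak_0, \etak_1$, so this amounts to bookkeeping once that convention is adopted, and replacing $\betasnext$ by $\betasnextH$ in \eqref{eqn:m_2H} does not change the combinatorics. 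For \eqref{eqn:m_3a}, the relevant triangle is the thin triangle produced by the Hamiltonian perturbation; the hypothesis that $\ThetaHk$ is the top-graded generator of $\genset(\betaskH, \betask)$ determines its remaining vertex, so that $m_2(\ThetaHk \tensor \cdot)$ is the nearest-point (continuation) map, sending $\EtaprevnextH_i$ to $\Etaprevnext_i$.

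Finally, \eqref{eqn:m_3b} and \eqref{eqn:m_3c} are index-$(-1)$ rectangle counts, rigid in the one-parameter family of conformal structures on the square. For \eqref{eqn:m_3c} with $i = j = \ell = 0$, I would exhibit the unique contributing homotopy class: an embedded square in $\Sigma$ with corners $\Etaknext_0$, $\Etanextnext_0$, $\EtaprevnextH_0$, and $\ThetakH$, of index $-1$ and with a single holomorphic representative; for every other choice of $(i,j,\ell)$ the candidate class fails to close up with everywhere-nonnegative multiplicities, so the count vanishes. For \eqref{eqn:m_3b}, one similarly checks from \fullref{fig:mult_hd} that no positive index-$(-1)$ rectangle with the prescribed corners occurs; alternatively, since $\betaskH$ is an arbitrarily small Hamiltonian perturbation of $\betask$, an area/degeneration argument forces the count to zero. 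I expect this last part --- the rectangle enumeration for \eqref{eqn:m_3c} and \eqref{eqn:m_3b} --- to be the main obstacle: it requires computing the index of quadrilateral domains, confirming that the contributing class in \eqref{eqn:m_3c} is genuinely rigid (its count over $\Conf(D_4)$ is odd), and ruling out the escape of rigid squares to the ends of $\Conf(D_4)$, where the square would degenerate into a triangle together with a bigon or into two triangles --- which is under control here precisely because $m_1 \equiv 0$ and the triangle counts above are already pinned down.
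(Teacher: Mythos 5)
Your proposal is correct and follows essentially the same route as the paper: enumerate the few positive domains in $\HDbH$ disjoint from $\bdy \Sigma \setminus \orarcs$ with the prescribed corners and the required index, and invoke the Riemann mapping theorem for uniqueness of the holomorphic representative. Two points of comparison are worth recording. First, for \fullref{eqn:m_1} your two arguments each cover only part of the cases: for the Hamiltonian pairs $(\betask, \betaskH)$ and $(\betaskH, \betask)$ there \emph{are} positive index-one domains --- two bigons, which cancel modulo $2$ --- so the ``no positive index-one domain'' check fails there and it is the homological nontriviality of the unlink complement (or the explicit bigon cancellation) that does the work; conversely, for the mixed pairs $(\betask, \betasnext)$ the unlink identification is not available, and it is the direct absence of domains that applies. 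Taken together the two arguments cover everything, but ``either way'' overstates each one individually. Second, for the quadrilateral in \fullref{eqn:m_3c} with $i = j = \ell = 0$ you propose a direct rigidity count (the embedded square has a unique modulus, hence contributes a single point of the moduli space over $\Conf (D_4)$), whereas the paper degenerates the Hamiltonian perturbation $\betakH \to \betak$ so that the square limits onto the triangle $p$, whose count is already known to be one; both are standard and valid, though your concern about squares escaping to the ends of $\Conf (D_4)$ is not needed for the count itself --- admissibility gives compactness, and the ends are precisely what the $\Ainf$-relations of \fullref{prop:ainf} account for.
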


\begin{proof}
  All calculations here follow from considerations similar to that in 
  \fullref{sssec:BSDY1}, summarized in \fullref{rmk:computations}.  
  (\fullref{rmk:complication} does not apply here.) Below, we mention some 
  facts that may help the reader to verify the calculations.

  For \fullref{eqn:m_1}, note that, for each pair of two sets of attaching 
  circles $(\betas, \betas')$ except $(\betask, \betaskH)$ and $(\betaskH, 
  \betask)$, it is clear that there are no domains in $(\Sigma, \betas, 
  \betas')$ that could possibly contribute to $m_1$. For each of $(\betask, 
  \betaskH)$ and $(\betaskH, \betask)$, there are two bigons that cancel out.  
  (All Heegaard diagrams involved are \emph{nice} in the sense defined by 
  Sarkar and Wang \cite{SarWan10:Nice}, and so the bigon counts are 
  combinatorial.)

  For \fullref{eqn:m_2}, note that if we completely ignore the three 
  Hamiltonian perturbations $\betaskH$, then there are exactly four regions 
  that are not adjacent to $\bdy \Sigma \setminus \orarcs$, each a topological 
  disk made up of three arcs (i.e.\ a triangle).  Each of the twelve cases in 
  \fullref{eqn:m_2} arises from exactly one of these four regions, and each 
  region gives rise to exactly three equations. \fullref{eqn:m_2H} is 
  completely analogous.

  There is exactly one region that gives rise to each case in 
  \fullref{eqn:m_3a}, and \fullref{eqn:m_3b} follows from the fact that there 
  are no regions of the required index. Finally, for \fullref{eqn:m_3c}, 
  observe that there are no regions of the required index except when $i = j = 
  \ell = 0$. In the case $i = j = \ell = 0$, there is exactly one quadrilateral 
  formed by the four relevant intersection points, which reduces to one of the 
  four triangles in the previous paragraph as the small Hamiltonian 
  perturbation $\betakH$ is chosen to approach $\betak$; this is the triangle   
  $p$ mentioned in our description of $\betakH$, in the center of the left of 
  \fullref{fig:mult_hd}.
\end{proof}

\subsection{Proof of \texorpdfstring{\fullref{prop:elementary}}{the key 
    proposition} by polygon counting}
\label{ssec:proof_prop}

%%%%%%%%%%%%%%%%%%%%%%%%%%%%%%%%%%%%%%%%%%%%%%%%%%%%%%%
\begin{figure}[!htbp]
  \captionsetup{aboveskip={\dimexpr10pt+2pt+\sactualfontsize\relax}}
  \labellist
	\scriptsize\hair 2pt
	\pinlabel \textcolor{red}{4} [t] at 77 1
	\pinlabel \textcolor{red}{5} [t] at 95 1
	\pinlabel \textcolor{red}{3} [t] at 128 1
	\pinlabel \textcolor{red}{4} [t] at 145 1
	\pinlabel \textcolor{red}{2} [t] at 317 1
	\pinlabel \textcolor{red}{3} [t] at 337 1
	\pinlabel \textcolor{red}{1} [t] at 366 1
	\pinlabel \textcolor{red}{2} [t] at 385 1
	\pinlabel \textcolor{red}{6} at 50 78
	\pinlabel \textcolor{red}{5} at 63 76
	\pinlabel \textcolor{red}{6} at 74 78
	\pinlabel \textcolor{red}{2} at 402 76

  	\pinlabel {\small 6} [t] at 86 0
	\pinlabel {\small 5} [t] at 111 0
	\pinlabel {\small 4} [t] at 137 0
	\pinlabel {\small 3} [t] at 326 0
	\pinlabel {\small 2} [t] at 352 0
	\pinlabel {\small 1} [t] at 377 0
	\pinlabel {\small 7} at 57 77
	\pinlabel {\small 8} at 68 77

  \pinlabel $\etainfty_1$ at 68 138
  \pinlabel $\etazeroH_1$ at 142 129
  \pinlabel $\etazero_1$ at 154 112
  \pinlabel $\etaone_1$ at 170 111
  \pinlabel $\etaHone_1$ at 190 120
  \pinlabel $\etaHone_0$ at 117 117
  \pinlabel $\thetaoneH$ at 105 95
  \pinlabel $\thetaHone$ at 103 71
  \pinlabel $\etaone_0$ at 133 106
  \pinlabel $\etainfty_0$ at 150 87
  \pinlabel $\etazeroH_0$ at 110 46
  \pinlabel $\etazero_0$ at 130 73
	\pinlabel $x$ at 164 63
  \pinlabel $\Ham{x}$ at 144 45
	\pinlabel $y_2$ at 133 32
	\pinlabel $z_1$ at 76 53
	\pinlabel $y_1$ at 86 22
	\pinlabel $z_2$ at 390 58
	\pinlabel $y_3$ at 371 36
	
	\endlabellist
  \includegraphics{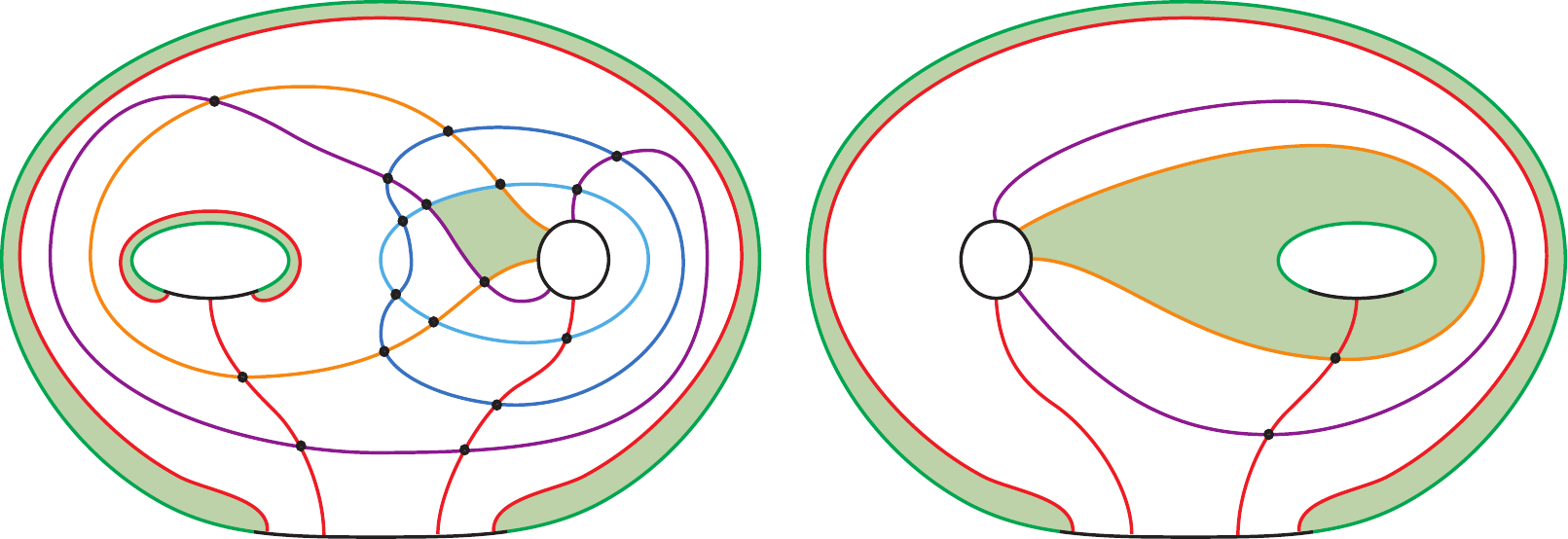}
  \caption{The bordered--sutured multi-diagram $\HDabH = (\Sigma, \alphas, 
    \set{\betask, \betaskH}_{k \in \set{\infty, 0, 1}}, \arcdiag, \psi)$.}
  \label{fig:mult_hd_2}
\end{figure}
%%%%%%%%%%%%%%%%%%%%%%%%%%%%%%%%%%%%%%%%%%%%%%%%%%%%%%%

We now define the type~$D$ morphisms needed in \fullref{lem:hom_alg}.

Consider the bordered--sutured multi-diagram $\HDabH = (\Sigma, \alphas, 
\set{\betask, \betaskH}_{k \in \set{\infty, 0, 1}}, \arcdiag, \psi)$, shown in 
\fullref{fig:mult_hd_2}, which is provincially admissible but not admissible.  
Note that $\BSDh (\alphas, \betask) = \BSDh (\Bk)$ as in 
\fullref{ssec:tancomp}. For each $k \in \set{\infty, 0, 1}$, let
\[
  \Etaknext = \Etaknext_0 \dirsum \Etaknext_1,
\]
and define
\begin{align*}
  f_k& \colon \BSDh (\alphas, \betask) \to \AZ \tensor \BSDh (\alphas, 
  \betasnext), & f_k& (\w) = \delta_2 (\Etaknext \tensor \w),\\
  \vphi_k& \colon \BSDh (\alphas, \betask) \to \AZ \tensor \BSDh (\alphas, 
  \betasprev), & \vphi_k& (\w) = \delta_3 (\Etanextnext \tensor \Etaknext 
  \tensor \w).
\end{align*}
Then $f_k$ and $\vphi_k$ are type~$D$ morphisms.

\begin{lemma}
  \label{lem:cond1_poly}
  The morphisms $f_k \colon \BSDh (\Bk) \to \BSDh (\Bnext)$ are type~$D$ 
  homomorphisms and thus satisfy Condition~(1) of \fullref{lem:hom_alg}.
\end{lemma}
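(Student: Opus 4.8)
The plan is to deduce Condition~(1) of \fullref{lem:hom_alg} directly from the $A_\infty$-relations of \fullref{prop:ainf}, specialized to $n = 2$. Recall that $f_k(\w) = \delta_2(\Etaknext \tensor \w)$ with $\Etaknext = \Etaknext_0 \dirsum \Etaknext_1 \in \genset(\betask, \betasnext)$; by bilinearity it suffices to run the argument with each summand $\Etaknext_i$ in place of $\Etaknext$ and add. Writing $\delta_{M_k}$ for the structure map of $\BSDh(\Bk) = \BSDh(\alphas, \betask)$ (which is $\delta_1$ in the notation of \fullref{ssec:bordered_polygons}), and taking the single $\SFC$-input to be $\Etaknext$ and the module input to be $\w$, the $n = 2$ case of \fullref{prop:ainf} reads
\[
  0 = \deltat_2(\Etaknext \tensor \delta_{M_k}(\w)) + \deltat_1(\delta_2(\Etaknext \tensor \w)) + \delta_2(m_1(\Etaknext) \tensor \w) + (d \tensor \id) \comp \delta_2(\Etaknext \tensor \w).
\]

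First I would discard the third term on the right: by \fullref{lem:m_123} we have $m_1 \equiv 0$ for the sutured multi-diagram $\HDbH$, and since this statement concerns only the $\beta$-curves it applies to the relevant intersection points in $\HDabH$ as well, so $m_1(\Etaknext_i) = 0$ and the term vanishes. The heart of the argument is then to identify the three surviving terms with the three terms in the definition of $\partial f_k$ from \fullref{sec:preliminaries}, namely
\[
  \partial f_k = (\mu \tensor \id) \comp (\id_A \tensor \delta_{M_{k+1}}) \comp f_k + (\mu \tensor \id) \comp (\id_A \tensor f_k) \comp \delta_{M_k} + (d \tensor \id) \comp f_k.
\]
Unwinding $\deltat_1(a \tensor \y) = (\mu(a, \blank) \tensor \id) \comp \delta_{M_{k+1}}(\y)$ gives $\deltat_1(\delta_2(\Etaknext \tensor \w)) = (\mu \tensor \id) \comp (\id_A \tensor \delta_{M_{k+1}}) \comp f_k(\w)$, the first term of $\partial f_k$; unwinding $\deltat_2(\Etaknext \tensor a \tensor \w) = (\mu(a, \blank) \tensor \id) \comp \delta_2(\Etaknext \tensor \w)$ together with the definition of $f_k$ gives $\deltat_2(\Etaknext \tensor \delta_{M_k}(\w)) = (\mu \tensor \id) \comp (\id_A \tensor f_k) \comp \delta_{M_k}(\w)$, the second term; and the last term is literally $(d \tensor \id) \comp f_k$ evaluated at $\w$. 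Substituting into the $n = 2$ relation then gives $(\partial f_k)(\w) = 0$ for every generator $\w$ of $\BSDh(\Bk)$, hence $\partial f_k = 0$; that is, $f_k$ is a type~$D$ homomorphism, which is Condition~(1).

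I do not expect a genuine obstacle here. The only points requiring care are the bookkeeping in the identification above --- in particular keeping the order of the arguments consistent with \fullref{rmk:order} --- and the observation that every sum occurring is finite, which holds because each $\BSDh(\Bk)$ has at most three generators and, by \fullref{rmk:no_repeat}, only a tightly constrained set of algebra elements can appear.
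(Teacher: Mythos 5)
Your proposal is correct and follows essentially the same route as the paper: both apply the $n=2$ case of \fullref{prop:ainf} with inputs $\Etaknext$ and $\w$, kill the $m_1$ term via \fullref{eqn:m_1}, and identify the three surviving terms with the three terms of $\bdy f_k$. The extra unwinding of $\deltat_1$ and $\deltat_2$ that you carry out explicitly is exactly the bookkeeping the paper leaves implicit.
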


\begin{proof}
  This and following lemmas in this subsection are proven by combining the 
  $\Ainf$-relations in \fullref{prop:ainf} and the computations in 
  \fullref{lem:m_123}. We show the details explicitly in this proof and omit 
  them in the sequel.
  
  With inputs $\Etaknext$ and $\w$, the $\Ainf$-relation in \fullref{prop:ainf} 
  is
  \[
    \begin{split}
      0& = \deltat_2 (\Etaknext \tensor \delta_1 (\w)) + \deltat_1 (\delta_2 
      (\Etaknext \tensor \w))\\
      & \quad + \delta_2 (m_1 (\Etaknext) \tensor \w) + \paren{d \tensor 
        \id_{\BSDh (\Bnext)}} \comp \delta_2 (\Etaknext \tensor \w).
    \end{split}
  \]
  \fullref{eqn:m_1} now implies the third term is zero, and the above reduces 
  to
  \[
    0 = f_k \comp \delta (\w) + \delta \comp f_k (\w) + d f_k (\w) = \bdy f_k 
    (\w),
  \]
  which is the condition that $f_k$ is a homomorphism.
\end{proof}

\begin{lemma}
  \label{lem:cond2_poly}
  The morphisms $f_k \colon \BSDh (\Bk) \to \BSDh (\Bnext)$ and $\vphi_k \colon 
  \BSDh (\Bk) \to \BSDh (\Bprev)$ satisfy Condition~(2) of 
  \fullref{lem:hom_alg}.
\end{lemma}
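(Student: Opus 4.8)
The plan is to run the same machine as in the proof of \fullref{lem:cond1_poly}: feed a suitable input into the $\Ainf$-relation of \fullref{prop:ainf}, use \fullref{lem:m_123} to annihilate the terms involving $m_n$'s, and recognize what survives as $f_{k+1} \comp f_k + \bdy \vphi_k$. Concretely, I would take $n = 3$ with $\Eta^{k_2} = \Etanextnext$, $\Eta^{k_1} = \Etaknext$, and module input $\w$. Spelling out \fullref{prop:ainf}, the $\delta_p$-terms for $p = 1, 2, 3$ are
\[
  \deltat_3 (\Etanextnext \tensor \Etaknext \tensor \delta (\w)), \quad
  \deltat_2 (\Etanextnext \tensor \delta_2 (\Etaknext \tensor \w)), \quad
  \deltat_1 (\delta_3 (\Etanextnext \tensor \Etaknext \tensor \w));
\]
the $m_{q-p}$-terms for $(p, q) = (1, 2), (1, 3), (2, 3)$ are
\[
  \delta_3 (\Etanextnext \tensor m_1 (\Etaknext) \tensor \w), \quad
  \delta_2 (m_2 (\Etanextnext \tensor \Etaknext) \tensor \w), \quad
  \delta_3 (m_1 (\Etanextnext) \tensor \Etaknext \tensor \w);
\]
and the final term is $(d \tensor \id_{\BSDh (\Bprev)}) \comp \delta_3 (\Etanextnext \tensor \Etaknext \tensor \w) = d \vphi_k (\w)$.

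Next I would clear the $m$-terms. By \fullref{eqn:m_1}, $m_1 \equiv 0$, so the $(p,q) = (1,2)$ and $(2,3)$ terms vanish. For the middle one, since $\Etaknext = \Etaknext_0 \dirsum \Etaknext_1$ and $\Etanextnext = \Etanextnext_0 \dirsum \Etanextnext_1$, \fullref{eqn:m_2} gives
\[
  m_2 (\Etanextnext \tensor \Etaknext) = \sum_{i, j \in \cycgrp{2}}
  m_2 (\Etanextnext_j \tensor \Etaknext_i) = \sum_{i, j \in \cycgrp{2}}
  \Etakprev_{i + j} = 0
\]
over $\F{2}$, because each of $\Etakprev_0$ and $\Etakprev_1$ appears exactly twice in the sum; hence the $m_2$-term vanishes too. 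It then remains to identify the three surviving $\delta_p$-terms: unwinding the definition of $\deltat_n$ together with the facts that $\delta_2 (\Etaknext \tensor \blank) = f_k$, $\delta_2 (\Etanextnext \tensor \blank) = f_{k+1}$, $\delta_3 (\Etanextnext \tensor \Etaknext \tensor \blank) = \vphi_k$, and $\delta_1$ is the type~$D$ structure map on the relevant module, these terms are precisely $\vphi_k \comp \delta (\w)$, $f_{k+1} \comp f_k (\w)$, and $\delta \comp \vphi_k (\w)$, in the sense of the formula for $\bdy$ recalled in \fullref{sec:preliminaries}. Collecting everything, the $\Ainf$-relation reads $0 = f_{k+1} \comp f_k (\w) + \bdy \vphi_k (\w)$, which is exactly Condition~(2) of \fullref{lem:hom_alg}.

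The only step requiring any thought is the cancellation $m_2 (\Etanextnext \tensor \Etaknext) = 0$ over $\F{2}$: it is \emph{not} that $m_2$ vanishes on individual pairs of generators — it does not — but that the four contributions cancel in pairs once one passes to the direct-sum generators $\Etaknext, \Etanextnext$. Everything else is bookkeeping that runs parallel to the proof of \fullref{lem:cond1_poly}, so, following the authors' stated convention, I would present it tersely.
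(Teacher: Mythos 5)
Your proposal is correct and follows exactly the paper's route: apply \fullref{prop:ainf} with inputs $\Etanextnext$, $\Etaknext$, $\w$, kill the $m_1$-terms by \fullref{eqn:m_1}, and observe that $m_2(\Etanextnext \tensor \Etaknext) = \sum_{i,j} \Etakprev_{i+j} = 0$ over $\F{2}$ since each of $\Etakprev_0$ and $\Etakprev_1$ occurs twice. The paper merely states this combination without spelling out the terms, so your write-up is just a more explicit version of the same argument.
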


\begin{proof}
  This is a combination of \fullref{prop:ainf} with inputs $\Etanextnext$, 
  $\Etaknext$, and $\w$, together with \fullref{eqn:m_1} and the equality
  \begin{equation}
    \label{eqn:m_2_sum}
    m_2 (\Etanextnext \tensor \Etaknext) = 0,
  \end{equation}
  which is obtained by taking the sum of \fullref{eqn:m_2} over all $i, j \in 
  \set{0, 1}$.
\end{proof}

\begin{lemma}
  \label{lem:cond3_poly}
  The morphisms $f_k \colon \BSDh (\Bk) \to \BSDh (\Bnext)$ and $\vphi_k \colon 
  \BSDh (\Bk) \to \BSDh (\Bprev)$ satisfy Condition~(3) of 
  \fullref{lem:hom_alg}.
\end{lemma}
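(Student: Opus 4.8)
The plan is to obtain the homotopies $\vkappa_k$ of \fullref{lem:hom_alg} by combining the $A_\infty$-relations of \fullref{prop:ainf} for several polygon counts in the multi-diagram $\HDabH$ of \fullref{fig:mult_hd_2}, all involving the Hamiltonian perturbations $\betaskH$, with the computations in \fullref{lem:m_123} and with \fullref{lem:cond1_poly}, \fullref{lem:cond2_poly}. The perturbations are forced here: the naive candidate, a count of pentagons with boundary on $\alphas, \betask, \betasnext, \betasprev, \betask$, has in its $A_\infty$-relation a term $m_3(\Etaprevnext \tensor \Etanextnext \tensor \Etaknext)$ closing the loop $\betask \to \betasnext \to \betasprev \to \betask$, which would live in a sutured Floer complex of $\betask$ against itself and is undefined; routing the last edge through $\betaskH$ and closing with $\ThetaHk$ avoids this, at the cost of tracking the nearest-point maps between $\betask$ and $\betaskH$.

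Let $g_k = \delta_2(\ThetakH \tensor \blank)$ and $h_k = \delta_2(\ThetaHk \tensor \blank)$ be the nearest-point maps between $\BSDh(\alphas, \betask)$ and $\BSDh(\alphas, \betaskH)$; these are type~$D$ homomorphisms by \fullref{eqn:m_1}, and, being the triangle maps of mutually inverse isotopies, satisfy $h_k \comp g_k \homeq \Id_k$. Set $P = \delta_4(\EtaprevnextH \tensor \Etanextnext \tensor \Etaknext \tensor \blank) \colon \BSDh(\alphas, \betask) \to \AZ \tensor \BSDh(\alphas, \betaskH)$, $P' = \delta_3(\ThetaHk \tensor \EtaprevnextH \tensor \blank)$, $P'' = \delta_4(\ThetaHk \tensor \EtaprevnextH \tensor \Etanextnext \tensor \blank)$, and the auxiliary counts $Q_1 = \delta_3(\EtaprevnextH \tensor \Etanextnext \tensor \blank)$, $Q_2 = \delta_2(\EtaprevnextH \tensor \blank)$. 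The $A_\infty$-relation for $P$ collapses: the $m_1$-terms vanish by \fullref{eqn:m_1}, the $m_2$-terms by \fullref{eqn:m_2_sum} and the $k \mapsto k+1$ instance of \fullref{eqn:m_2H}, the $\delta_p$-terms are $Q_1 \comp f_k$ and $Q_2 \comp \vphi_k$ (using $\delta_2(\Etaknext \tensor \blank) = f_k$ and $\delta_3(\Etanextnext \tensor \Etaknext \tensor \blank) = \vphi_k$), and the surviving $m_3$-term is $\ThetakH$ by \fullref{eqn:m_3c}; thus
\[
  g_k = \partial P + Q_1 \comp f_k + Q_2 \comp \vphi_k.
\]
Likewise the $A_\infty$-relation for $P'$, using \fullref{eqn:m_3a} (so the $m_2$-term is $\delta_2(\Etaprevnext \tensor \blank) = f_{k+2}$), gives $h_k \comp Q_2 = f_{k+2} + \partial P'$, and the $A_\infty$-relation for $P''$, using \fullref{eqn:m_3a}, \fullref{eqn:m_3b}, \fullref{eqn:m_2_sum}, gives $h_k \comp Q_1 = \vphi_{k+1} + \partial P'' + P' \comp f_{k+1}$.

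Precomposing the first identity with $h_k$, substituting the other two, and simplifying with the facts that $h_k$ is a homomorphism, $\partial f_k = 0$ (\fullref{lem:cond1_poly}), $f_{k+1} \comp f_k = \partial \vphi_k$ (\fullref{lem:cond2_poly}), and the Leibniz rule, every error term reassembles into a boundary, leaving
\[
  f_{k+2} \comp \vphi_k + \vphi_{k+1} \comp f_k + \partial \vkappa_k = \Id_k,
  \qquad
  \vkappa_k = h_k \comp P + P'' \comp f_k + P' \comp \vphi_k + \lambda_k,
\]
where $\lambda_k$ is any homotopy with $h_k \comp g_k = \Id_k + \partial \lambda_k$. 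This is Condition~(3) of \fullref{lem:hom_alg}.

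The main obstacle is the homological-algebra bookkeeping in this last step. Since $\vphi_k$ is merely a homotopy, substituting the relation for $h_k \comp Q_1$ leaves a term $P' \comp f_{k+1} \comp f_k = P' \comp \partial \vphi_k$, which must be paired against the term $(\partial P') \comp \vphi_k$ coming from $h_k \comp Q_2 \comp \vphi_k = (f_{k+2} + \partial P') \comp \vphi_k$; only the Leibniz rule exhibits their sum as $\partial(P' \comp \vphi_k)$ rather than a genuine obstruction. A secondary but essential point is to check that $P$, $P'$, and $P''$ are \emph{defined}, i.e.\ that their boundary curves $\betask, \betasnext, \betasprev, \betaskH$ are pairwise transverse and no two edges of a given polygon lie on one curve — which is exactly why $\betaskH$ had to be inserted between $\betasprev$ and $\betask$ and not elsewhere. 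As in the previous two lemmas, beyond \fullref{lem:m_123} no moduli-space input is needed.
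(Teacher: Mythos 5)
Your argument is correct and is essentially the paper's proof: your $g_k, h_k, Q_1, Q_2, P'$ are exactly the paper's $\PhikH, \PhiHk, \vphinextH, \fprevH, \Xiprev$, your three $\Ainf$-relations (for $P$, $P'$, $P''$) are the three instances of \fullref{prop:ainf} the paper combines with \fullref{lem:m_123}, and the final appeal to $h_k \comp g_k \homeq \Id_k$ via the Hamiltonian-isotopy continuation maps is the same closing step (the paper cites \cite{Lip06:HFCylindrical} for this). The only quibble is a citation slip: the $m_2$-vanishing needed in the relations for $P$ and $P''$ is \fullref{eqn:m_2H_sum} (the summed, $k\mapsto k+1$ instance of \fullref{eqn:m_2H}), not \fullref{eqn:m_2_sum}.
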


\begin{proof}
  We begin by letting
  \[
    \EtaknextH = \EtaknextH_0 \dirsum \EtaknextH_1,
  \]
  and defining the morphisms
  \begin{align*}
    \fkH& \colon \BSDh (\alphas, \betask) \to \AZ \tensor \BSDh (\alphas, 
    \betasnextH), & \fkH& (\w) = \delta_2 (\EtaknextH \tensor \w),\\
    \vphikH& \colon \BSDh (\alphas, \betask) \to \AZ \tensor \BSDh (\alphas, 
    \betasprevH), & \vphikH& (\w) = \delta_3 (\EtanextnextH \tensor \Etaknext 
    \tensor \w),
  \end{align*}
  which are analogous to $f_k$ and $\vphi_k$. We also define the morphisms
  \begin{align*}
    \PhikH& \colon \BSDh (\alphas, \betask) \to \AZ \tensor \BSDh (\alphas, 
    \betaskH), & \PhikH& (\w) = \delta_2 (\ThetakH \tensor \w),\\
    \PhiHk& \colon \BSDh (\alphas, \betaskH) \to \AZ \tensor \BSDh (\alphas, 
    \betask), & \PhiHk& (\w) = \delta_2 (\ThetaHk \tensor \w),\\
    \Xik& \colon \BSDh (\alphas, \betask) \to \AZ \tensor \BSDh (\alphas, 
    \betasnext), & \Xik& (\w) = \delta_3 (\ThetaHnext \tensor \EtaknextH 
    \tensor \w),
  \end{align*}

  We first claim that $f_{k+2}$ is homotopic to $\PhiHk \comp \fprevH$. Indeed, 
  this follows from \fullref{prop:ainf} with inputs $\ThetaHk$, 
  $\EtaprevnextH$, and $\w$, together with \fullref{eqn:m_1} and the equation
  \begin{equation}
    \label{eqn:m_3a_sum}
    m_2 (\ThetaHk \tensor \EtaprevnextH) = \Etaprevnext,
  \end{equation}
  which is obtained by taking the sum of \fullref{eqn:m_3a} over $i \in \set{0, 
    1}$.

  Similarly, we see that $\vphi_{k+1}$ is homotopic to $\PhiHk \comp \vphinextH 
  + \Xiprev \comp f_{k+1}$. (Note that $\vphi_{k+1}$ is not a homomorphism.) 
  This follows from \fullref{prop:ainf} with inputs $\ThetaHk$, 
  $\EtaprevnextH$, $\Etanextnext$, and $\w$, together with \fullref{eqn:m_1}, 
  \fullref{eqn:m_3a_sum}, and the equations
  \begin{gather}
    m_2 (\EtaprevnextH \tensor \Etanextnext) = 0, \label{eqn:m_2H_sum}\\
    m_3 (\ThetaHk \tensor \EtaprevnextH \tensor \Etanextnext) = 0, 
    \label{eqn:m_3b_sum}
  \end{gather}
  which are obtained respectively from \fullref{eqn:m_2H} and 
  \fullref{eqn:m_3b}.

  Thus, we see that $f_{k+2} \comp \vphi_k + \vphi_{k+1} \comp f_k$ is 
  homotopic to
  \[
    \PhiHk \comp (\fprevH \comp \vphi_k + \vphinextH \comp f_k) + \Xiprev \comp 
    f_{k+1} \comp f_k.
  \]
  By \fullref{lem:cond2_poly}, the last term is nullhomotopic. Now we claim 
  that $\fprevH \comp \vphi_k + \vphinextH \comp f_k$ is homotopic to $\PhikH$.  
  Indeed, this follows from \fullref{prop:ainf} with inputs $\EtaprevnextH$, 
  $\Etanextnext$, and $\Etaknext$, together with \fullref{eqn:m_1}, 
  \fullref{eqn:m_2_sum}, \fullref{eqn:m_2H_sum}, and the equation
  \begin{equation}
    \label{eqn:m_3c_sum}
    m_3 (\EtaprevnextH \tensor \Etanextnext \tensor \Etaknext) = \ThetakH,
  \end{equation}
  which is obtained from \fullref{eqn:m_3c}.

  This means that $f_{k+2} \comp \vphi_k + \vphi_{k+1} \comp f_k$ is homotopic 
  to $\PhiHk \comp \PhikH$. Now recall that the Hamiltonian isotopy between 
  $\betask$ and $\betaskH$ can be equivalently viewed as a deformation of the 
  almost complex structure on $\Sigma \cross D_2$, and 
  \cite[Proposition~11.4]{Lip06:HFCylindrical}, applied to the bordered--sutured 
  Floer setting, implies that $\PhikH$ and $\PhiHk$ are each homotopic to the 
  homomorphism induced by the corresponding deformation. (Note that in 
  \cite{Lip06:HFCylindrical}, a small Hamiltonian isotopy of $\betas$ is 
  denoted by ${\betas}'$, while ${\betas}^{H}$ instead denotes the result of a 
  handleslide.) By \cite[Lemma~9.4 and Lemma~9.5]{Lip06:HFCylindrical}, $\PhiHk 
  \comp \PhikH$ is then homotopic to the homomorphism induced by the identity 
  deformation, and hence homotopic to the identity.
\end{proof}

\begin{proof}[Proof of \fullref{prop:elementary}, via holomorphic polygon 
  counts]
  We apply \fullref{lem:hom_alg}, the conditions of which are satisfied 
  according to \fullref{lem:cond1_poly}, \fullref{lem:cond2_poly}, and 
  \fullref{lem:cond3_poly}.
\end{proof}

%%%%%%%%%%%%%%%%%%%%%%%%%%%%%%%%%%%%%%%%%%%%%%%%%%%%%%%
\section{The skein relation via direct computation}
\label{sec:skein_via_computation}
%%%%%%%%%%%%%%%%%%%%%%%%%%%%%%%%%%%%%%%%%%%%%%%%%%%%%%%

In this section, we give an alternative proof of \fullref{prop:elementary} by 
direct computation.

Consider again the bordered--sutured multi-diagram $\HDab$ in 
\fullref{fig:mult_hd_2}, and let $i, j \in \set{0, 1}$.

%%%%%%%%%%%%%%%%%%%%%%%%%%%%%%%%%%%%%%%%%%%%%%%%%%%%%%%
\begin{figure}[!htbp]
  \captionsetup{aboveskip={\dimexpr10pt+2pt+\sactualfontsize\relax}}
  \labellist
	\scriptsize\hair 2pt
	\pinlabel \textcolor{red}{4} [t] at 77 1
	\pinlabel \textcolor{red}{5} [t] at 95 1
	\pinlabel \textcolor{red}{3} [t] at 128 1
	\pinlabel \textcolor{red}{4} [t] at 145 1
	\pinlabel \textcolor{red}{2} [t] at 317 1
	\pinlabel \textcolor{red}{3} [t] at 337 1
	\pinlabel \textcolor{red}{1} [t] at 366 1
	\pinlabel \textcolor{red}{2} [t] at 385 1
	\pinlabel \textcolor{red}{6} at 50 78
	\pinlabel \textcolor{red}{5} at 63 76
	\pinlabel \textcolor{red}{6} at 74 78
	\pinlabel \textcolor{red}{2} at 402 76

  	\pinlabel {\small 6} [t] at 86 0
	\pinlabel {\small 5} [t] at 111 0
	\pinlabel {\small 4} [t] at 137 0
	\pinlabel {\small 3} [t] at 326 0
	\pinlabel {\small 2} [t] at 352 0
	\pinlabel {\small 1} [t] at 377 0
	\pinlabel {\small 7} at 57 77
	\pinlabel {\small 8} at 68 77

  \pinlabel $\etainfty_1$ at 68 138
  \pinlabel $\etazero_1$ at 154 112
  \pinlabel $\etaone_1$ at 170 111
  \pinlabel $\etaone_0$ at 133 106
  \pinlabel $\etainfty_0$ at 150 87
  \pinlabel $\etazero_0$ at 130 73
	\pinlabel $x$ at 164 63
  \pinlabel $y_2$ at 133 32
	\pinlabel $z_1$ at 76 53
	\pinlabel $y_1$ at 86 22
	\pinlabel $z_2$ at 390 58
	\pinlabel $y_3$ at 371 36
	
	\endlabellist
  \includegraphics{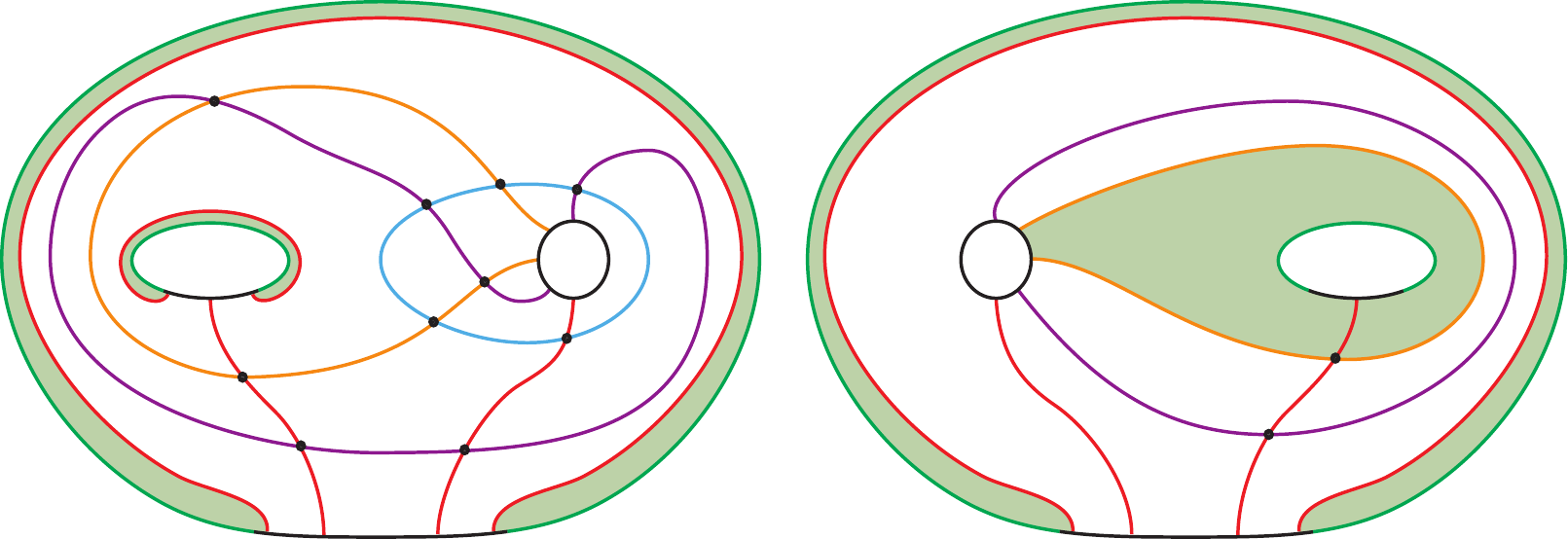}
  \caption{The bordered--sutured multi-diagram $\HDab = (\Sigma, \alphas, 
    \set{\betask}_{k \in \set{\infty, 0, 1}}, \arcdiag, \psi)$.}
  \label{fig:mult_hd_3}
\end{figure}
%%%%%%%%%%%%%%%%%%%%%%%%%%%%%%%%%%%%%%%%%%%%%%%%%%%%%%%

Given generators $\w^k \in \genset (\HD_k)$, with corresponding intersection 
points $w^k$, and a sequence of Reeb chords $\vec{\rho} = (\rho^1, \dotsc, 
\rho^m)$, let $\Tri_i (\w^{k+1}, \w^k; \vec{\rho})$ be the space of immersed 
disks $p$ in $\Sigma$ with the following properties:
\begin{itemize}
  \item $\bdy p \subset \alphas \union \betask \union \betasnext \union 
    \betasprev \union \orarcs$;
  \item If we denote by $\alpha^a$ an $\alpha$-arc in $\alphas^a$, then, 
    traversing $\bdy p$ in the standard orientation, we encounter
    \[
      w^k, \alpha^a, \rho^1, \alpha^a, \rho^2, \alpha^a, \dotsc, \alpha^a, 
      \rho^m, \alpha^a, w^{k+1}, \betanext, \etak_i, \betak, w^k,
    \]
    in this order; and
  \item At each of the turning points above, i.e.\ $w^k$, $w^{k+1}$, $\etak_i$, 
    and $\set{\bdy \rho^\ell} \subset \matchedpts$, the angle is acute.
\end{itemize}
Similarly, let $\Quad_{ij} (\w^{k+2}, \w^k; \vec{\rho})$ be the space of 
immersed disks $p$ in $\Sigma$ with the following properties:
\begin{itemize}
  \item $\bdy p \subset \alphas \union \betask \union \betasnext \union 
    \betasprev \union \orarcs$;
  \item If we denote by $\alpha^a$ an $\alpha$-arc in $\alphas^a$, then, 
    traversing $\bdy p$ in the standard orientation, we encounter
    \[
      w^k, \alpha^a, \rho^1, \alpha^a, \rho^2, \alpha^a, \dotsc, \alpha^a, 
      \rho^m, \alpha^a, w^{k+2}, \betaprev, \etanext_j, \betanext, \etak_i, 
      \betak, w^k,
    \]
    in this order; and
  \item At each of the turning points above, i.e.\ $w^k$, $w^{k+2}$, 
    $\etanext_j$, $\etak_i$, and $\set{\bdy \rho^\ell} \subset \matchedpts$, 
    the angle is acute.
\end{itemize}

We use the spaces above to define the morphisms
\begin{align*}
  \fcombki& \colon \BSDh (\Bk) \to \BSDh (\Bnext), & \fcombki (\w^k)& = 
  \sum_{\w^{k+1} \in \genset (\HD_{k+1})} \sum_{\Tri_i (\w^{k+1}, \w^k; 
    \vec{\rho})} a \paren{- \vec{\rho}} \tensor \w^{k+1},\\
  \vphicombkij& \colon \BSDh (\Bk) \to \BSDh (\Bprev), & \vphicombkij (\w^k)& = 
  \sum_{\w^{k+2} \in \genset (\HD_{k+2})} \sum_{\Quad_{ij} (\w^{k+2}, \w^k; 
    \vec{\rho})} a \paren{- \vec{\rho}} \tensor \w^{k+2},
\end{align*}
and the morphisms
\begin{align*}
  \fcombk& \colon \BSDh (\Bk) \to \BSDh (\Bnext), & \fcombk& = \fcomb_{k,0} + 
  \fcomb_{k,1},\\
  \vphicombk& \colon \BSDh (\Bk) \to \BSDh (\Bprev), & \vphicombk& = 
  \vphicomb_{k,00} + \vphicomb_{k,01} + \vphicomb_{k,10} + \vphicomb_{k,11}.\\
\end{align*}

Since the definitions of $\fcombki$ and $\vphicombkij$ refer to actual immersed 
disks rather than holomorphic representatives, by inspecting $\HDab$, we may 
directly compute:
\begin{align*}
  \fcomb_{1,0}(\x)& = (4, 6) \tensor \y_1 + (12, 3) \tensor \y_2 + (4, 56) 
  \tensor \y_2 + (2) \tensor \y_3,\\
  \fcomb_{1,1} (\x)& = (4, 6, 7, 8) \tensor \y_1 + I \tensor \y_2,\\
  \fcomb_{\infty,0} (\y_1)& = (45, 6) \tensor \z_1 + (7, 8) \tensor \z_1,\\
  \fcomb_{\infty,0} (\y_2)& = (4, 6) \tensor \z_1 + (4, 56, 2) \tensor \z_2,\\
  \fcomb_{\infty,0} (\y_3)& = I \tensor \z_2,\\
  \fcomb_{\infty,1} (\y_1)& = I \tensor \z_1,\\
  \fcomb_{\infty,1} (\y_2)& = (12, 3, 4, 6) \tensor \z_1 + (2) \tensor \z_2 + 
  (12, 3, 4, 56, 2) \tensor \z_2,\\
  \fcomb_{\infty,1} (\y_3)& = (1, 23) \tensor \z_2,\\
  \fcomb_{0,0} (\z_1)& = (5) \tensor \x + (5, 12, 3, 4, 56) \tensor \x + (45, 
  6, 7, 8, 5) \tensor \x,\\
  \fcomb_{0,0} (\z_2)& = (1, 3, 4, 56) \tensor \x,\\
  \fcomb_{0,1} (\z_1)& = (5, 12, 3) \tensor \x + (7, 8, 5) \tensor \x,\\
  \fcomb_{0,1} (\z_2)& = (1, 3) \tensor \x.
\end{align*}
See \fullref{fig:comm_diag_f} for a graphical representation of $\fcombki$.

%%%%%%%%%%%%%%%%%%%%%%%%%%%%%%%%%%%%%%%%%%%%%%%%%%%%%%%
\begin{figure}[!htbp]
  \vspace*{\scractualfontsize}
  \labellist
  \scriptsize\hair 2pt
   \pinlabel {$(4,6,7,8,5)$} [b] at 100 261

   {\color{bluearr}%
     \pinlabel {$(4,6)$} [b] at 21 231
     \pinlabel {$(4,56)$} [r] at 82 215
     \pinlabel {$(12,3)$} [l] at 117 215
     \pinlabel {$(2)$} [l] at 165 233
   }
   {\color{redarr}%
     \pinlabel {$(4,6,7,8)$} [r] at 49 226
   }
   \pinlabel {$(5)$} [b] at 53 193
   \pinlabel {$(1,3)$} [b] at 145 190 
   \pinlabel {$(7,8,5,12,3)$} [b] at 55 181
   \pinlabel {$(7,8,5,2)$} [t] at 95 172
   {\color{bluearr}%
     \pinlabel {$(7,8)$} [r] at 4 151
     \pinlabel {$(45,6)$} [l] at 22 156
     \pinlabel {$(4,6)$} [l] at 68 158
     \pinlabel {$(4,56,2)$} [l] at 120 163
   }
   {\color{redarr}%
     \pinlabel {$(12,3,4,6)$} [r] at 62 166
     \pinlabel {$(2)$} [r] at 119 157
     \pinlabel {$(12,3,4,56,2)$} [l] at 142 170
     \pinlabel {$(1,23)$} [l] at 185 152
   }
   \pinlabel {$(5,12,3,4,6)$} [r] at 27 120
   \pinlabel {$(1,3,4,6)$} [b] at 99 137
   \pinlabel {$(5,2)$} [b] at 99 129
   \pinlabel {$(5,12,3,4,56,2)$} [b] at 99 120
   \pinlabel {$(1,3,4,56,2)$} [l] at 167 120
   {\color{bluearr}%
     \pinlabel {$(45,6,7,8,5)$} [r] at 34 88
     \pinlabel {$(5)$} [r] at 63 100
     \pinlabel {$(5,12,3,4,56)$} [l] at 85 110
     \pinlabel {$(1,3,4,56)$} [l] at 145 96
   }
   {\color{redarr}%
     \pinlabel {$(7,8,5)$} [r] at 54 96
     \pinlabel {$(5,12,3)$} [l] at 66 100
     \pinlabel {$(1,3)$} [l] at 127 99
   }
   \pinlabel {$(4,6,7,8,5)$} [l] at 132 72
   {\color{bluearr}%
     \pinlabel {$(4,6)$} [r] at 22 52
     \pinlabel {$(4,56)$} [r] at 79 43
     \pinlabel {$(12,3)$} [l] at 115 43
     \pinlabel {$(2)$} [l] at 163 49
   }
   {\color{redarr}%
     \pinlabel {$(4,6,7,8)$} [r] at 47 46
   }
   \pinlabel {$(5)$} [b] at 53 20
   \pinlabel {$(7,8,5,12,3)$} [b] at 55 8
   \pinlabel {$(7,8,5,2)$} [b] at 101 1
   \pinlabel {$(1,3)$} [b] at 145 18

   \pinlabel {\large $\x$} [ ] at 100 244
   \pinlabel {\large $\y_1$} [ ] at 12 185
   \pinlabel {\large $\y_2$} [ ] at 101 185
   \pinlabel {\large $\y_3$} [ ] at 186 185
   \pinlabel {\large $\z_1$} [ ] at 57 128
   \pinlabel {\large $\z_2$} [ ] at 145 128
   \pinlabel {\large $\x$} [ ] at 100 69
   \pinlabel {\large $\y_1$} [ ] at 15 12
   \pinlabel {\large $\y_2$} [ ] at 101 12
   \pinlabel {\large $\y_3$} [ ] at 186 12

  \endlabellist
  \includegraphics[scale=1.35]{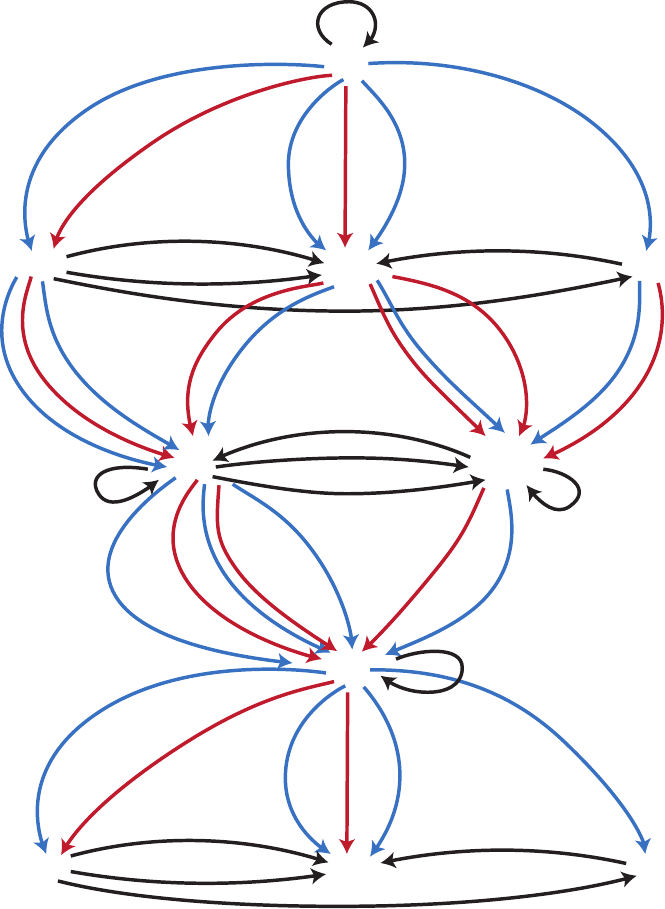}
  \caption{A graphical representation of the morphisms $\fcombki$. Black arrows 
    represent $\delta$, blue arrows represent $\fcomb_{k,0}$, and red arrows 
    represent $\fcomb_{k,1}$. The repeated level, from $\BSDh (\B_1)$ to $\BSDh 
    (\B_\infty)$, is drawn here to facilitate checking 
    \fullref{lem:cond2_comp}.}
  \label{fig:comm_diag_f}
\end{figure}
%%%%%%%%%%%%%%%%%%%%%%%%%%%%%%%%%%%%%%%%%%%%%%%%%%%%%%%

\begin{remark}
  \label{rmk:blocked_disks}
  Note that we have had to observe that $(12, 23) = (45, 56) = 0$ in this 
  calculation; for instance, $\fcomb_{\infty,0} (\y_1)$ in fact has a term 
  $(45, 56, 2) \tensor \z_2 = 0$. To see the corresponding immersed disk, which 
  has multiplicity $2$ at two non-adjacent regions, one may, for example, 
  concatenate the domain for the term $(5) \tensor \y_2$ in $\delta (\y_1)$ 
  with the domain for the term $(4, 56, 2) \tensor \z_2$ in $\fcomb_{\infty,0} 
  (\y_2)$; see \fullref{fig:mult_hd_4}. In the holomorphic interpretation (see 
  \fullref{prop:poly_comp} below), these cases represent situations where we 
  already know that $\chi (\source) = 1$ (cf.\ \fullref{rmk:complication}, 
  where we may have $\chi (\source) < 1$), but no partition $\vec{P}$ supports 
  a contribution to $f_k$.
\end{remark}

%%%%%%%%%%%%%%%%%%%%%%%%%%%%%%%%%%%%%%%%%%%%%%%%%%%%%%%
\begin{figure}[!htbp]
  \captionsetup{aboveskip={\dimexpr10pt+2pt+\sactualfontsize\relax}}
  \labellist
	\scriptsize\hair 2pt
	\pinlabel \textcolor{red}{4} [t] at 77 1
	\pinlabel \textcolor{red}{5} [t] at 95 1
	\pinlabel \textcolor{red}{3} [t] at 128 1
	\pinlabel \textcolor{red}{4} [t] at 145 1
	\pinlabel \textcolor{red}{2} [t] at 317 1
	\pinlabel \textcolor{red}{3} [t] at 337 1
	\pinlabel \textcolor{red}{1} [t] at 366 1
	\pinlabel \textcolor{red}{2} [t] at 385 1
	\pinlabel \textcolor{red}{6} at 50 78
	\pinlabel \textcolor{red}{5} at 63 76
	\pinlabel \textcolor{red}{6} at 74 78
	\pinlabel \textcolor{red}{2} at 402 76

  	\pinlabel {\small 6} [t] at 86 0
	\pinlabel {\small 5} [t] at 111 0
	\pinlabel {\small 4} [t] at 137 0
	\pinlabel {\small 3} [t] at 326 0
	\pinlabel {\small 2} [t] at 352 0
	\pinlabel {\small 1} [t] at 377 0
	\pinlabel {\small 7} at 57 77
	\pinlabel {\small 8} at 68 77

  \pinlabel $\etainfty_0$ at 150 87
  \pinlabel $y_1$ at 86 22
	\pinlabel $z_2$ at 390 58
	
	\endlabellist
  \includegraphics{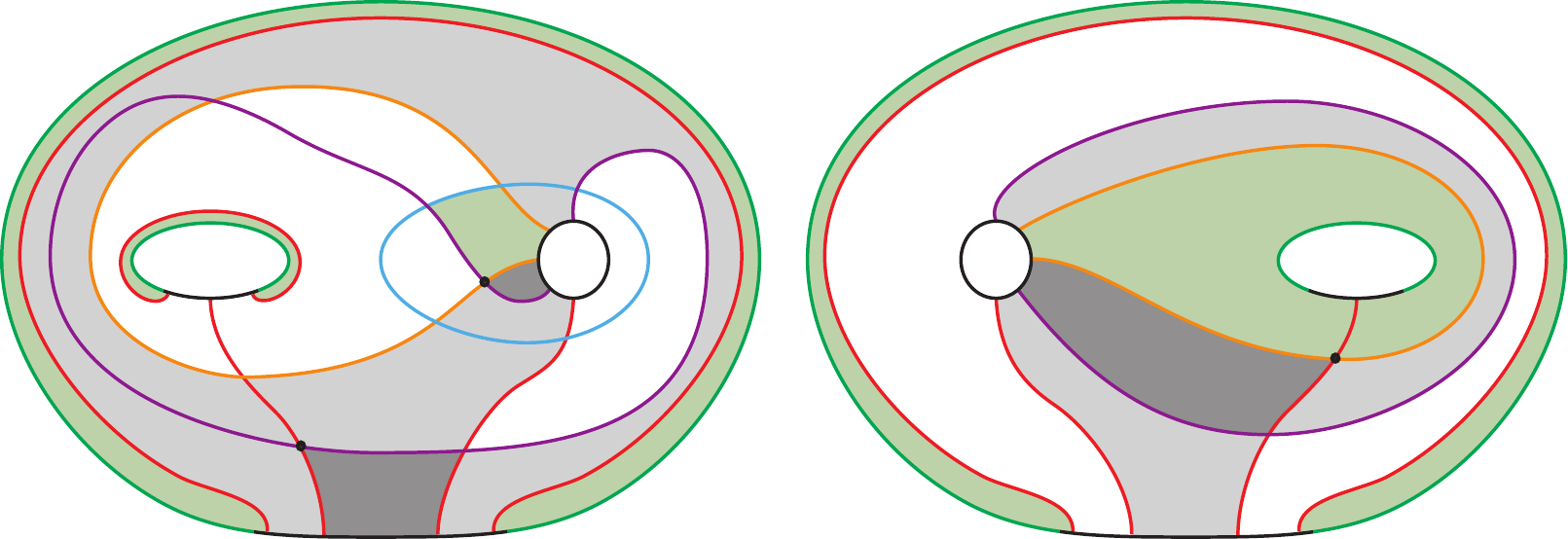}
  \caption{An immersed disk in $\Tri_0 (\z_2, \y_1; (-\rho_{45}, -\rho_{56}, 
    -\rho_2))$ that does not contribute to $\fcomb_{\infty,0} (\y_1)$ because 
    $(45, 56, 2) = 0$. Regions with multiplicity $1$ are shaded in light gray, 
    and regions with multiplicity $2$ in dark gray.}
    \label{fig:mult_hd_4}
\end{figure}
%%%%%%%%%%%%%%%%%%%%%%%%%%%%%%%%%%%%%%%%%%%%%%%%%%%%%%%

\begin{lemma}
  \label{lem:cond1_comp}
  The morphisms $\fcombk \colon \BSDh (\Bk) \to \BSDh (\Bnext)$ are type~$D$ 
  homomorphisms and thus satisfy Condition~(1) of \fullref{lem:hom_alg}.
\end{lemma}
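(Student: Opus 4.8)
The plan is a direct verification that $\bdy \fcombk = 0$ for each $k \in \set{\infty, 0, 1}$, using only the explicit formulas for the $\fcombki$ listed above, the module differentials on $\BSDh (\B_1)$, $\BSDh (\B_\infty)$, and $\BSDh (\B_0)$ computed in \fullref{ssec:tancomp}, and the algebra arithmetic of \fullref{ssec:4puncS2}. Recall that the boundary of a type~$D$ morphism $f \colon \M \to \module{N}$ is
\[
  \bdy f = (\mu \tensor \id_N) \comp (\id_A \tensor \delta_N) \comp f + (\mu \tensor \id_N) \comp (\id_A \tensor f) \comp \delta_M + (d \tensor \id_N) \comp f,
\]
so, working over $\F{2}$, it suffices to show that for every generator $\w^k$ of $\BSDh (\Bk)$ the three contributions to $\bdy \fcombk (\w^k)$ --- apply $\fcombk$ and then the differential of $\BSDh (\Bnext)$; apply the differential of $\BSDh (\Bk)$ and then $\fcombk$; and apply $d$ to the algebra coefficients of $\fcombk (\w^k)$ --- sum to zero in $\AZfive \tensor \BSDh (\Bnext)$.

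First I would expand each of these three contributions for each of the finitely many generators $\x$, $\y_1$, $\y_2$, $\y_3$, $\z_1$, $\z_2$. Every term arises from \fullref{fig:comm_diag_f}: follow a blue or red arrow and then a black arrow; follow a black arrow and then a blue or red arrow; or differentiate a blue or red label. Each such step multiplies two elements of $\AZfive$, producing a monomial $(j_1, \dotsc, j_\ell) \tensor \w^{k+1}$.

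Second I would simplify every monomial using the rules of \fullref{ssec:4puncS2}. In particular, \fullref{rmk:no_repeat} annihilates any product in which one of the digits $1, 3, 4, 6, 7, 8$ repeats or one of $2, 5$ occurs three times; the coincidences $(12, 3, 2) = (2, 1, 23)$ and $(45, 6, 5) = (5, 4, 56)$ are needed to identify certain monomials with one another; and the vanishing products $(12, 23) = (45, 56) = 0$ noted in \fullref{rmk:blocked_disks} eliminate others. With the monomials in a common form, one checks, generator by generator, that each surviving monomial occurs an even number of times, so that the entire sum vanishes over $\F{2}$.

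The only genuine difficulty is bookkeeping, and it is concentrated in the case $k = \infty$, where $\BSDh (\B_\infty)$ has three generators and long Reeb-chord labels such as $(7, 8, 5, 12, 3)$ appear; there several cancellations occur only after rewriting monomials via the algebra coincidences above, and one must take care not to overlook a vanishing product. A tidy way to organize the check is to fix $\w^k$, then partition the resulting monomials by their target generator $\w^{k+1}$ and by the underlying homology class of the Reeb-chord sequence, reducing each case to a short list whose pairwise cancellation is immediate. Conceptually these cancellations are the ends of the one-dimensional spaces of broken immersed disks (a triangle juxtaposed with a bigon, or a splitting on the algebra side), which is why no serious computation is required once the monomials are sorted. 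Alternatively, once \fullref{prop:poly_comp} identifies $\fcombk$ with the holomorphic map $f_k$, the lemma follows from \fullref{lem:cond1_poly}; but the argument sketched here is self-contained.
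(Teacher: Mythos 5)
Your proposal is correct and is essentially the paper's own argument: a direct, generator-by-generator verification that the three contributions to $\bdy \fcombk$ cancel over $\F{2}$, using \fullref{fig:comm_diag_f}, the vanishing rules of \fullref{rmk:no_repeat}, and the algebra coincidences $(45,6,5)=(5,4,56)$ and $(12,3,2)=(2,1,23)$. The only (immaterial) difference is that the paper observes the slightly stronger fact that each $\fcombki$ is individually a type~$D$ homomorphism, rather than only the sum $\fcomb_{k,0}+\fcomb_{k,1}$.
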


\begin{proof}
  In fact, $\fcombki$ is a homomorphism for $i \in \set{0, 1}$. The proof is a 
  direct computation, and the reader may find \fullref{fig:comm_diag_f} helpful 
  in checking this computation. We provide a sample calculation here: 
  \begin{align*}
    \delta \comp \fcomb_{\infty,0} (\y_1) & = \deltat ((45, 6) \tensor \z_1 + 
    (7, 8) \tensor \z_1)\\
    & = (45, 6, 5, 12, 3, 4, 6) \tensor \z_1 + (45, 6, 5, 2) \tensor \z_2 + 
    (45, 6, 5, 12, 3, 4, 56, 2) \tensor \z_2\\
    & \quad + (7, 8, 5, 12, 3, 4, 6) \tensor \z_1 + (7, 8, 5, 2) \tensor \z_2 + 
    (7, 8, 5, 12, 3, 4, 56, 2) \tensor \z_2\\
    & = (45, 6, 5, 2) \tensor \z_2 + (7, 8, 5, 12, 3, 4, 6) \tensor \z_1 + (7, 
    8, 5, 2) \tensor \z_2\\
    & \quad + (7, 8, 5, 12, 3, 4, 56, 2) \tensor \z_2,\\
    \fcomb_{\infty,0} \comp \delta (\y_1) & = \paren{\mu \tensor \id_{\BSDh 
        (\B_1)}} \comp (\id_A \tensor \fcomb_{\infty,1}) ((5) \tensor \y_2 + 
    (7, 8, 5, 12, 3) \tensor \y_2 + (7, 8, 5, 2) \tensor \y_3)\\
    & = (5, 4, 6) \tensor \z_1 + (5, 4, 56, 2) \tensor \z_2 + (7, 8, 5, 12, 3, 
    4, 6) \tensor \z_1\\
    & \quad + (7, 8, 5, 12, 3, 4, 56, 2) \tensor \z_2 + (7, 8, 5, 2) \tensor 
    \z_2,\\
    d \fcomb_{\infty,0} (\y_1) & = \paren{d \tensor \id_{\BSDh (\B_1)}} (\y_1) 
    = d (45, 6) \tensor \z_1 + d (7, 8) \tensor \z_1 = (5, 4, 6) \tensor \z_1.
  \end{align*}
  Observing that $(45, 6, 5, 2) = (5, 4, 56, 2)$ (see \fullref{fig:alg_mult} 
  and the surrounding text), we see that the sum of the three terms above is 
  zero.
\end{proof}

We may also directly compute:
\begin{align*}
  \vphicomb_{\infty,00} (\y_2)& = (4, 56) \tensor \x,\\
  \vphicomb_{\infty,10} (\y_2)& = I \tensor \x,\\
  \vphicomb_{\infty,11} (\y_2)& = (12, 3) \tensor \x,\\
  \vphicomb_{0,00} (\z_1)& = (45, 6) \tensor \y_1,\\
  \vphicomb_{0,01} (\z_1)& = I \tensor \y_1,\\
  \vphicomb_{0,11} (\z_1)& = (7, 8) \tensor \y_1,\\
  \vphicomb_{0,10} (\z_2)& = (1, 23) \tensor \y_3,\\
  \vphicomb_{0,11} (\z_2)& = I \tensor \y_3.
\end{align*}
All other combinations of $i, j, k, \w$ yield $\vphicombkij (\w) = 0$. See 
\fullref{fig:comm_diag_phi} for a graphical representation of the sums 
$\vphicombk$.

%%%%%%%%%%%%%%%%%%%%%%%%%%%%%%%%%%%%%%%%%%%%%%%%%%%%%%%
\begin{figure}[!htbp]
  \labellist
  \scriptsize\hair 2pt
   \pinlabel {$(4,6,7,8,5)$} [t] at 108 208
   \pinlabel {$(5)$} [b] at 47 171
   \pinlabel {$(7,8,5,12,3)$} [b] at 47 160
   \pinlabel {$(7,8,5,2)$} [t] at 141 155
   \pinlabel {$(1,3)$} [b] at 138 169
   {\color{greenarr}%
     \pinlabel {$(4,56)$} [r] at 70 133
     \pinlabel {$(12,3)$} [l] at 114 133
   }
   \pinlabel {$(5,12,3,4,6)$} [r] at 15 111
   \pinlabel {$(1,3,4,6)$} [b] at 92 123
   \pinlabel {$(5,2)$} [t] at 92 116
   \pinlabel {$(5,12,3,4,56,2)$} [t] at 92 106
   \pinlabel {$(1,3,4,56,2)$} [l] at 167 111
   {\color{greenarr}%
     \pinlabel {$(7,8)$} [r] at 12 75
     \pinlabel {$(45,6)$} [l] at 46 63
     \pinlabel {$(1,23)$} [l] at 169 65
   }
   \pinlabel {$(4,6,7,8,5)$} [t] at 113 54
   \pinlabel {$(5)$} [b] at 47 20
   \pinlabel {$(7,8,5,12,3)$} [b] at 47 10
   \pinlabel {$(7,8,5,2)$} [b] at 94 1
   \pinlabel {$(1,3)$} [b] at 138 17
   
   \pinlabel {\large $\x$} [ ] at 93 217
   \pinlabel {\large $\y_1$} [ ] at 5 165
   \pinlabel {\large $\y_2$} [ ] at 94 165
   \pinlabel {\large $\y_3$} [ ] at 180 165
   \pinlabel {\large $\z_1$} [ ] at 47 115
   \pinlabel {\large $\z_2$} [ ] at 137 115
   \pinlabel {\large $\x$} [ ] at 93 63
   \pinlabel {\large $\y_1$} [ ] at 7 15
   \pinlabel {\large $\y_2$} [ ] at 94 15
   \pinlabel {\large $\y_3$} [ ] at 180 15

  \endlabellist
  \includegraphics[scale=1.35]{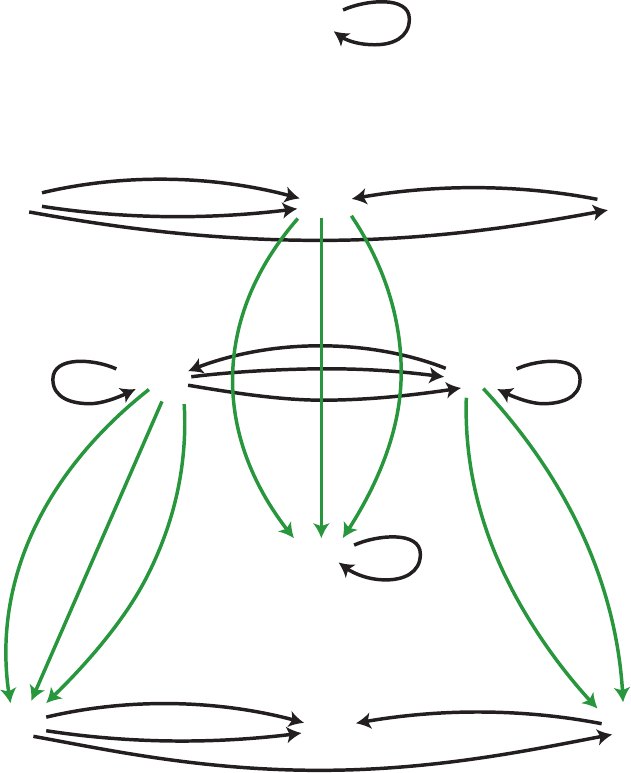}
  \caption{A graphical representation of the morphisms $\vphicombk$. Black 
    arrows represent $\delta$ and green arrows represent $\vphicombk$.}
  \label{fig:comm_diag_phi}
\end{figure}
%%%%%%%%%%%%%%%%%%%%%%%%%%%%%%%%%%%%%%%%%%%%%%%%%%%%%%%

\begin{lemma}
  \label{lem:cond2_comp}
  The morphisms $\fcombk \colon \BSDh (\Bk) \to \BSDh (\Bnext)$ and $\vphicombk 
  \colon \BSDh (\Bk) \to \BSDh (\Bprev)$ satisfy Condition~(2) of 
  \fullref{lem:hom_alg}.
\end{lemma}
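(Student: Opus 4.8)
The plan is to prove the lemma by a direct computation, entirely parallel to the proof of \fullref{lem:cond1_comp}: for each $k$ and each generator $\w$ of $\BSDh (\Bk)$, I would verify the identity $\fcombnext \comp \fcombk (\w) + \bdy \vphicombk (\w) = 0$ using only the explicit formulas for $\fcombki$, $\vphicombkij$, and $\delta$ recorded above, together with the concatenation rule for multiplication and the algebra identities of \fullref{ssec:4puncS2}. The computation splits across the six generators $\x$, $\y_1, \y_2, \y_3$, $\z_1, \z_2$ and is finite; since $\vphicombkij (\w) = 0$ for all but a handful of $(i, j, k, \w)$, only a few composites need to be expanded for each generator, and I would write one case, say $\w = \y_2 \in \BSDh (\B_\infty)$, out in detail and leave the rest to the same routine.

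Concretely, first I would expand
\[
  \fcombnext \comp \fcombk (\w) = \sum_{i, j \in \set{0, 1}} \paren{\mu \tensor \id_{\BSDh (\Bprev)}} \comp \paren{\id_A \tensor \fcomb_{k+1, j}} \comp \fcomb_{k, i} (\w),
\]
reading $\fcomb_{k, i}$ and $\fcomb_{k+1, j}$ off the formulas above — equivalently, tracing the length-two paths in \fullref{fig:comm_diag_f}, whose repeated level from $\BSDh (\B_1)$ to $\BSDh (\B_\infty)$ is drawn precisely for this purpose — and multiplying the resulting algebra elements by concatenating strands diagrams. Then I would expand $\bdy \vphicombk (\w)$ into its three summands, namely $\delta \comp \vphicombk (\w)$, the term $\paren{\mu \tensor \id_{\BSDh (\Bprev)}} \comp \paren{\id_A \tensor \vphicombk} \comp \delta (\w)$, and $d \, \vphicombk (\w)$, using the formulas for $\vphicombk$ (see \fullref{fig:comm_diag_phi}) and for $\delta$, exactly as in the proof of \fullref{lem:cond1_comp}, and check that the two expansions cancel. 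Two points require care. As noted in \fullref{rmk:blocked_disks}, certain formal products vanish even though every digit appears admissible — concretely $(12, 23) = (45, 56) = 0$ by \fullref{rmk:no_repeat} — so several immersed disks in the naive enumeration contribute nothing and must be discarded (for example the would-be term $(45, 56, 2) \tensor \z_2$ along the $\y_1$-route). And the associativity-type identities of \fullref{ssec:4puncS2}, such as $(45, 6, 5, 2) = (5, 4, 56, 2)$ and $(12, 3, 2) = (2, 1, 23)$, are needed repeatedly to see that a term produced by $\fcombnext \comp \fcombk$ cancels one produced by $\bdy \vphicombk$.

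The hard part here is not conceptual but organizational: there is no new idea beyond the formulas above and the algebra of \fullref{ssec:4puncS2}, but one must carry out the bookkeeping for all six generators — each contributing several length-two composites and three terms of $\bdy \vphicombk$ — while applying the vanishing products and the algebra identities consistently and keeping track of the edge indices $i, j$ through which each contribution passes. Conceptually, the identity records that the ends of the moduli spaces of immersed quadrilaterals counted by $\vphicombk$ are exactly the broken triangle pairs contributing to $\fcombnext \comp \fcombk$, together with the degenerations absorbed into the other two terms of $\bdy \vphicombk$; this is the combinatorial shadow of \fullref{lem:cond2_poly} and is made precise by \fullref{prop:poly_comp}, though the present argument does not rely on that equivalence.
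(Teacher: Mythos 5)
Your proposal is correct and matches the paper's proof, which is likewise a direct computation analogous to that of \fullref{lem:cond1_comp}, carried out generator by generator with the aid of \fullref{fig:comm_diag_f} and \fullref{fig:comm_diag_phi}. You have simply spelled out the bookkeeping (the expansion of $\bdy \vphicombk$ into its three terms, the vanishing products $(12,23) = (45,56) = 0$, and the identities such as $(45,6,5,2) = (5,4,56,2)$) that the paper leaves implicit.
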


\begin{proof}
  The proof is a direct computation analogous to that of 
  \fullref{lem:cond1_comp}. The reader may find it helpful to peruse both 
  \fullref{fig:comm_diag_f} and \fullref{fig:comm_diag_phi}.
\end{proof}

\begin{lemma}
  \label{lem:cond3_comp}
  The morphisms $\fcombk \colon \BSDh (\Bk) \to \BSDh (\Bnext)$ and $\vphicombk 
  \colon \BSDh (\Bk) \to \BSDh (\Bprev)$ satisfy Condition~(3) of 
  \fullref{lem:hom_alg}, with $\kappa_k \equiv 0$.
\end{lemma}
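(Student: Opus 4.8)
The plan is to prove Condition~(3) --- which, since $\vkappa_k \equiv 0$, asserts exactly that $\fcombprev \comp \vphicombk + \vphicombnext \comp \fcombk$ equals the identity type~$D$ homomorphism of $\BSDh (\Bk)$ --- by a direct computation in the style of \fullref{lem:cond1_comp}. For each $k \in \set{\infty, 0, 1}$ and each generator $\w$ of $\BSDh (\Bk)$, one expands the two composites $\fcombprev \comp \vphicombk (\w)$ and $\vphicombnext \comp \fcombk (\w)$ using the explicit formulas for the $\fcombki$ and $\vphicombkij$ tabulated above together with the multiplication in $\AZfive$ in our comma notation, and checks that the sum equals $I \tensor \w$. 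The diagrams in \fullref{fig:comm_diag_f} and \fullref{fig:comm_diag_phi} keep this bookkeeping under control.

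Two reductions cut down the work. First, $\vphicomb_1 = 0$ (none of its summands $\vphicomb_{1, ij}$ is nonzero), so for $k = 1$ the term $\fcombprev \comp \vphicombk$ drops out and it suffices to verify $\vphicomb_\infty \comp \fcomb_1 = \Id_{\BSDh (\B_1)}$; dually, for $k = 0$ the term $\vphicombnext \comp \fcombk$ drops out and one needs only $\fcomb_\infty \comp \vphicomb_0 = \Id_{\BSDh (\B_0)}$, so that $k = \infty$ is the sole case in which both composites contribute. Second, the unwanted monomials produced in each expansion all disappear over $\F{2}$: many vanish because the idempotents do not compose; others vanish by the repeated-digit principle of \fullref{rmk:no_repeat}, so that e.g.\ $(12, 3)^2 = (45, 6)^2 = (7, 8)^2 = (1, 23)^2 = 0$; and the rest cancel in pairs, two monomials agreeing because their constituent Reeb chords are supported on disjoint arcs of $\orarcs$ and therefore commute --- the mechanism illustrated in \fullref{fig:alg_mult} --- which yields $(12, 3)(4, 56) = (4, 56)(12, 3)$, $(45, 6)(7, 8) = (7, 8)(45, 6)$, and, after a short manipulation, $(12, 3, 4, 56, 2) = (4, 56, 2, 1, 23)$.

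As a sample, for $k = 1$ one reads off from the list that $\fcomb_1 (\x)$ has $\y_2$-component $\paren{(12, 3) + (4, 56) + I} \tensor \y_2$ and $\y_1$- and $\y_3$-components that $\vphicomb_\infty$ annihilates; since $\vphicomb_\infty (\y_2) = \paren{(4, 56) + I + (12, 3)} \tensor \x$, one gets
\[
  \vphicomb_\infty \comp \fcomb_1 (\x) = \paren{(12, 3) + (4, 56) + I}^2 \tensor \x,
\]
whose expansion consists of two vanishing squares, the cancelling pair $(12, 3)(4, 56) + (4, 56)(12, 3)$, and $I$, leaving $I \tensor \x$ as desired. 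The case $k = 0$ is the same computation in different notation, and $k = \infty$ --- where one also needs the relations above to make the $\y_1$- and $\y_3$-components cancel \emph{between} the two composites --- is longer but brings nothing new. I expect this last cancellation in the $k = \infty$ case to be the main obstacle: it is the only place where genuinely nonzero cross-terms must be matched up, and it succeeds precisely because of the commutation relations in $\AZfive$; one must also recall, as in \fullref{rmk:blocked_disks}, that $(12, 23) = (45, 56) = 0$, so that some apparent contributions are in fact absent.
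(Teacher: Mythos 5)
Your proposal is correct and is exactly the direct computation that the paper's proof (which simply refers back to the method of \fullref{lem:cond1_comp} and the two figures) has in mind: the reductions via $\vphicomb_1 \equiv 0$, the vanishing of squares by \fullref{rmk:no_repeat}, and the cancellations coming from commuting products of chords on disjoint arcs (including $(12,3,4,56,2) = (4,56,2,1,23)$) are precisely the checks required. No discrepancy with the paper's argument.
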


\begin{proof}
  The proof is a direct computation analogous to that of 
  \fullref{lem:cond1_comp}. Again, it may be helpful to peruse both 
  \fullref{fig:comm_diag_f} and \fullref{fig:comm_diag_phi}.
\end{proof}

\begin{proof}[Proof of \fullref{prop:elementary}, via direction computation]
  We apply to $\fcombk$ and $\vphicombk$ \fullref{lem:hom_alg}, the conditions 
  of which are satisfied according to \fullref{lem:cond1_comp}, 
  \fullref{lem:cond2_comp}, and \fullref{lem:cond3_comp}.
\end{proof}

We end this section by relating this proof with the one in 
\fullref{sec:skein_via_polygon}.

\begin{proposition}
  \label{prop:poly_comp}
  The morphisms $\fcombk$ and $\vphicombk$ coincide with $f_k$ and $\vphi_k$ 
  respectively.
\end{proposition}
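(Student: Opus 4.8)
The plan is to show that, in every relevant homology class, the holomorphic polygon counts defining $f_k$ and $\vphi_k$ collapse to the combinatorial immersed-disk counts defining $\fcombk$ and $\vphicombk$. Recall that $f_k (\w^k) = \delta_2 (\Etaknext \tensor \w^k)$ and $\vphi_k (\w^k) = \delta_3 (\Etanextnext \tensor \Etaknext \tensor \w^k)$, where the polygon counts of \fullref{sec:skein_via_polygon} are taken in $\HDabH$ but involve only $\alphas$ and the $\betask$, not the Hamiltonian translates; so the relevant multi-diagram is effectively $\HDab$ of \fullref{fig:mult_hd_3}. First I would fix a homology class $B$ together with a discrete ordered partition $\vec{P}$ satisfying $a (-\vec{\rho} (\vec{P})) \neq 0$ and $\ind (B, \vec{\rho} (\vec{P})) = 0$ for the triangle count (resp.\ $= -1$ for the rectangle count), and run the argument summarized in \fullref{rmk:computations}: there are only finitely many candidate domains by \fullref{rmk:no_repeat}, the index formula \cite[Proposition~5.3.5]{Zar11:BSFH} forces the decorated source $\source$ to be a topological disk (outside the exceptional case discussed below), its number of $e$-punctures and their cyclic order are read off directly from $\HDab$, and the Riemann mapping theorem then yields a \emph{unique} holomorphic representative, which is embedded.

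Second, I would match the two descriptions directly. Projecting such an embedded holomorphic triangle to $\Sigma$ produces an immersed disk whose boundary, traversed with the standard orientation, runs through an $\alpha$-arc, the Reeb chords $\rho^1, \dotsc, \rho^m$ interleaved with that $\alpha$-arc, then $w^{k+1}$, $\betanext$, $\etak_i$, $\betak$, $w^k$, with acute corners at $w^k$, $w^{k+1}$, $\etak_i$, and the Reeb chord endpoints; by definition this is precisely an element of $\Tri_i (\w^{k+1}, \w^k; \vec{\rho})$, the discreteness of $\vec{P}$ being built into the interleaving. Conversely, every such immersed disk determines a homology class $B$ and a discrete partition $\vec{P}$ of the correct index, hence a unique holomorphic representative. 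Therefore $\# \embmoduli^B (\dotsc) = \# \Tri_i (\w^{k+1}, \w^k; \vec{\rho})$, and summing over $\w^{k+1}$, $\source$, and $\vec{P}$ gives $\delta_2 (\Etaknext_i \tensor \w^k) = \fcombki (\w^k)$ for each $i$, hence $f_k = \fcombk$; the identical argument with $\Quad_{ij}$ and $\etanext_j$ in place of $\Tri_i$ and $\etak_i$ gives $\vphi_k = \vphicombk$. In particular, the explicit lists of \fullref{sec:skein_via_computation}, displayed in \fullref{fig:comm_diag_f} and \fullref{fig:comm_diag_phi}, are literally the holomorphic counts.

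The main obstacle is the exceptional case of \fullref{rmk:complication}, where the domain $[B]$ has multiplicity $\geq 2$ at some region, so the index formula admits, besides $\chi (\source) = 1$, also possibilities with $\chi (\source) < 1$ that cannot arise from an immersed disk; these must be discarded, exactly as in \fullref{rmk:complication}. Any discrete partition supporting a source with $\chi (\source) < 1$ would have to contain a Reeb chord among $-\rho_{123}$, $-\rho_{456}$, $-\rho_{78}$, which is excluded by \fullref{rmk:no_full_reeb}, and the only remaining combinatorial possibilities force an algebra factor of the form $(12, 23)$ or $(45, 56)$, which vanishes by \fullref{rmk:no_repeat}; hence only disk sources contribute on the holomorphic side, and those are exactly the immersed disks counted on the combinatorial side. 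It remains only to note that the ``blocked'' immersed disks of \fullref{rmk:blocked_disks} — those with acute corners whose Reeb-chord sequence yields a vanishing algebra element such as $(45, 56, 2)$ — contribute zero to both $\fcombki$ and $\delta_2$, the latter because the sum defining $\delta_n$ ranges only over $\vec{P}$ with $a (-\vec{\rho} (\vec{P})) \neq 0$, so no discrepancy is introduced. Comparing the resulting expressions term by term, with \fullref{fig:comm_diag_f} and \fullref{fig:comm_diag_phi} as a bookkeeping aid, completes the proof.
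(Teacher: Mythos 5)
Your proposal follows essentially the same route as the paper's proof: reduce the holomorphic triangle and rectangle counts to the domain analysis of \fullref{rmk:computations}, observe that \fullref{rmk:no_full_reeb} and \fullref{rmk:complication} dispose of the higher-multiplicity sources, and match the resulting embedded disks with the elements of $\Tri_i$ and $\Quad_{ij}$. One correction: the index formula you need here is the one for polygons, \cite[Proposition~4.9]{LipOzsThu16:BFHBranchedDoubleSSII}, not \cite[Proposition~5.3.5]{Zar11:BSFH} (which governs bigons); the extra term $\sum \bdy_j (B) \cdot \bdy_\ell (B)$ in the polygon formula is precisely what forces the corners at the intersection points to be acute, a fact you assert but do not justify.
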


\begin{proof}
  The morphisms $f_k$ are defined using $\delta_2$, which in turn is defined by 
  counting holomorphic polygons, i.e.\ by considering the moduli space 
  $\embmoduli^B (\w^{k+1}, \Etaknext, \w^k; \source, \vec{P})$; the morphisms 
  $\vphi_k$ are defined similarly. As in the proof of \fullref{lem:m_123}, 
  these maps may be computed from considerations similar to that in 
  \fullref{sssec:BSDY1}, summarized in \fullref{rmk:computations}, with the 
  index formula \cite[Proposition~5.3.5]{Zar11:BSFH} replaced by the one for 
  polygons \cite[Proposition~4.9]{LipOzsThu16:BFHBranchedDoubleSSII}.   
  \fullref{rmk:no_full_reeb} and \fullref{rmk:complication} both apply. This 
  analysis yields the fact that the holomorphic polygon counts are in fact 
  counts of immersed disks, and consideration of the term $\sum \bdy_j (B) 
  \cdot \bdy_\ell (B)$ in 
  \cite[Proposition~4.9]{LipOzsThu16:BFHBranchedDoubleSSII} implies that the 
  angles at the intersection points are acute.
\end{proof}

%%%%%%%%%%%%%%%%%%%%%%%%%%%%%%%%%%%%%%%%%%%%%%%%%%%%%%%
\section{Comparison with the skein relation for knot Floer homology}
\label{sec:comparison}
%%%%%%%%%%%%%%%%%%%%%%%%%%%%%%%%%%%%%%%%%%%%%%%%%%%%%%%

In this section, we prove that the skein exact triangle in 
\fullref{cor:arbitrary} agrees with the one in \cite{Man07:HFKUnorientedSkein}, 
using a pairing theorem for triangles adapted from 
\cite{LipOzsThu16:BFHBranchedDoubleSSII}. Precisely, we prove the following, 
elaborated version of \fullref{thm:agree}.

\begin{theorem}
  \label{thm:agree_2}
  There exists a connected link projection of $L_\infty$ and a choice of edges, 
  such that the special Heegaard diagram $\HDMan_\infty$, when once 
  quasi-stabilized, can be obtained by gluing together two bordered--sutured 
  Heegaard diagrams $\HD'$ and $\HD_\infty$, where $\HD'$ is admissible, and 
  $\HD_\infty$ is as described in \fullref{ssec:tancomp} and used in the proof 
  of \fullref{thm:main}; moreover, for each $k \in \set{\infty, 0, 1}$, the 
  diagram
  \[
    \xymatrix{
      \BSAh (\HD') \boxtensor \BSDh (\HD_k) \ar[rrr]^{%
        \mathcenter{\scriptstyle \Id_{\BSAh (\HD')}} \, \boxtensor \, f_k} 
      \ar[d]_{\homeq} & & & \BSAh (\HD') \boxtensor \BSDh (\HD_{k+1}) 
      \ar[d]^{\homeq}\\
      \CFLt (\HDMank) \tensor V \ar[rrr]^{\FMank \, \tensor \, \id_V} & & & 
      \CFLt (\HDMannext) \tensor V,
    }
  \]
  where $V$ is a $2$-dimensional vector space, and where the vertical arrows 
  are induced by the pairing theorem \cite[Theorem~7.6.1]{Zar11:BSFH}, commutes 
  up to homotopy.
\end{theorem}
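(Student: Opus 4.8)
The plan is to realize Manolescu's special Heegaard diagram, near the distinguished crossing, as a single quasi-stabilization of the local piece $\HD_\infty$ from \fullref{ssec:tancomp}, and then to deduce the commutativity of the square from a pairing theorem for holomorphic polygon counts, in the spirit of \cite{LipOzsThu16:BFHBranchedDoubleSSII}. First I would recall from \cite{Man07:HFKUnorientedSkein} that, for a suitable connected link projection of $L_\infty$ with a chosen edge at the distinguished crossing, the diagram $\HDMan_\infty$ has a fixed standard form in a neighbourhood of that crossing, and that $\HDMan_0$ and $\HDMan_1$ are obtained from it by the evident local modifications --- the same ones relating $\HD_\infty$ to $\HD_0$ and $\HD_1$ in \fullref{ssec:tancomp}. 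The next step is to check, by direct inspection, that after one quasi-stabilization and an ambient diffeomorphism this standard local picture becomes exactly $\HD_\infty$, with the boundary circle of the local region parametrized by $\arcdiag$. Since the local modifications agree on both sides, the \emph{same} identification then realizes the quasi-stabilized $\HDMank$ as a gluing $\HD' \glue{\sutF (\arcdiag)} \HD_k$ for every $k \in \set{\infty, 0, 1}$, where $\HD'$ is the common outer piece.

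A small winding isotopy of $\HD'$ in the complement of the local region makes $\HD'$ admissible, while $\HD_\infty$, $\HD_0$, $\HD_1$ are provincially admissible by \fullref{ssec:tancomp}; this suffices to invoke Zarev's pairing theorem \cite[Theorem~7.6.1]{Zar11:BSFH}, which supplies the vertical homotopy equivalences $\BSAh (\HD') \boxtensor \BSDh (\HD_k) \homeq \CFLt (\HDMank) \tensor V$, the factor of $V$ recording the single quasi-stabilization. This yields every assertion of the theorem except the commutativity of the square.

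For the square, I would use that $\FMank$ is defined in \cite{Man07:HFKUnorientedSkein} by a holomorphic polygon count on the multi-diagram juxtaposing $\HDMank$ and $\HDMannext$, with a distinguished top-graded cycle $\Theta'$ in the Floer complex of its two $\beta$-systems. The key tool is a gluing theorem for such polygon counts, obtained from \cite[Section~4]{LipOzsThu16:BFHBranchedDoubleSSII} exactly as the maps $\delta_n$ were in \fullref{sec:skein_via_polygon}: for a bordered--sutured multi-diagram cut along $\sutF (\arcdiag)$, the glued polygon map is the box tensor product of the polygon map on the $\HD'$ side with the one on the $\HD_k$ side. Setting up the juxtaposed multi-diagram for the quasi-stabilized $\HDMank$, the explicit picture obtained above shows that $\Theta'$ decomposes across the cut as a top-graded cycle on the $\HD'$ side together with $\Etaknext_0 \dirsum \Etaknext_1$ on the $\HD_k$ side. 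Applying the gluing theorem, the count on the inner side is $\delta_2 (\Etaknext \tensor \blank) = f_k$ by definition (cf.\ \fullref{prop:poly_comp}); on the outer side the two $\beta$-systems differ only by a small translation, so the corresponding map is the induced homotopy equivalence, which becomes $\Id_{\BSAh (\HD')}$ once the standard identification is absorbed into the vertical arrows (the argument is as in the proof of \fullref{lem:cond3_poly}). Tracking the quasi-stabilization generators, this is $\Id_{\BSAh (\HD')} \tensor \id_V$, and hence $\FMank \tensor \id_V \homeq \Id_{\BSAh (\HD')} \boxtensor f_k$, which is the desired commutativity up to homotopy.

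The main obstacle will be the diagram identification at the start, together with the generator decomposition it feeds into: one must verify, by careful but essentially combinatorial inspection, that Manolescu's combinatorially specified local picture coincides after one quasi-stabilization with $\HD_\infty$, and that his polygon-counting cycle splits across the cut as claimed. The polygon gluing theorem in the bordered--sutured category is a routine adaptation of \cite[Section~4]{LipOzsThu16:BFHBranchedDoubleSSII}, parallel to the construction of $\delta_n$ already carried out in \fullref{sec:skein_via_polygon}, so it is not a real difficulty; and the bookkeeping of the $V$ factors is straightforward once the identification of diagrams is in hand.
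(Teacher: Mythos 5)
Your proposal follows the paper's proof essentially verbatim: the paper likewise realizes the once--quasi-stabilized $\HDManp_k$ as a gluing $\HD' \union \HD_k$ (by choosing the distinguished edge and the three marked edges at the crossing $v$, which is exactly what forces the one extra quasi-stabilization and the factor of $V$), and then invokes a bordered--sutured pairing theorem for triangles (\fullref{prop:comm_gen_2}, specialized as \fullref{thm:comm_spec}) with the decomposed cycle $\Theta \tensor \Etaknext$ to identify $\FMank$ with $\Id_{\BSAh (\HD')} \boxtensor f_k$ up to homotopy. The only point your ``combinatorial inspection'' would need to absorb, and which the paper handles explicitly, is the orientation-convention mismatch of \fullref{rmk:dir_arrows}: Manolescu's local $\beta$-circle $\beta_v$ must be replaced by the complementary curve $\beta_v'$ before the local piece matches $\HD_\infty$ (and admissibility of $\HD'$ is simply inherited from that of $\HDManpinfty$, so no winding is needed).
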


\begin{remark}
  \label{rmk:dir_arrows}
  Due to a difference in the orientation convention, the arrows in 
  \fullref{thm:agree_2} point in the direction opposite to those in 
  \cite{Man07:HFKUnorientedSkein, ManOzs08:QALinks}. As in 
  \cite{Won17:GHUnorientedSkein, PetWon18:TFHSkein}, we follow the convention 
  in \cite{OzsSza04:HFK, Zar11:BSFH}, where the Heegaard surface is the 
  oriented boundary of the $\alpha$-handlebody.
\end{remark}

\subsection{Pairing theorem for triangles}
\label{ssec:pairing_polygons}

In \fullref{ssec:bordered_polygons}, we briefly summarized a theory of 
holomorphic polygon counting for bordered Floer homology developed by Lipshitz, 
Ozsv\'ath, and Thurston \cite[Section~4]{LipOzsThu16:BFHBranchedDoubleSSII}. In 
fact, they also provide a pairing theorem for these bordered polygons in 
\cite[Section~5]{LipOzsThu16:BFHBranchedDoubleSSII}, which, roughly speaking, 
establishes that polygon counting in the bordered setting commutes with the 
(usual) operation of pairing with another bordered diagram, up to homotopy.  
Below, we will translate a special case, a pairing theorem for triangles, to 
the bordered--sutured Floer setting. To avoid confusion, we shall refer to the 
usual pairing theorems (e.g.\ \cite[Theorem~7.6.1 and 
Theorem~8.5.1]{Zar11:BSFH}) as \emph{pairing theorems for bigons}, to 
distinguish them from the pairing theorem for triangles.

Let $(\Sigma, \alphas, \set{\betask}_{k=1}^m, -\arcdiag, \psi)$ be a 
provincially admissible bordered--sutured multi-diagram. As in 
\fullref{ssec:bordered_polygons}, the bordered--sutured diagram $\HD_k = 
(\Sigma, \alphas, \betask, -\arcdiag, \psi)$ is provincially admissible, and 
the sutured diagram $\HD_{k_1, k_2} = (\Sigma, \betas^{k_1}, \betas^{k_2})$ is 
admissible. Here, we are concerned with the type~$D$ modules
\[
  \itensor[^{\AZ}]{\BSDh (\HD_k)}{}.
\]
Let $\Eta^{k_1} \in \SFC (\HD_{k_1, k_2})$ be a cycle; then \fullref{prop:ainf} 
implies that the morphism
\[
  \delta_2 (\Eta^{k_1} \tensor \blank) \colon \BSDh (\HD_{k_1}) \to \BSDh 
  (\HD_{k_2})
\]
is a type~$D$ homomorphism.

Now let $\HD' = (\Sigma', \alphas', \betas', \arcdiag_1 \union \arcdiag_2 
\union \arcdiag, \psi')$ be an admissible bordered--sutured diagram. Let 
$\betaspH$ be
a small Hamiltonian perturbation of $\betas'$; then $\HDpH = (\Sigma', 
\alphas', \betaspH, \arcdiag_1 \union \arcdiag_2 \union \arcdiag, \psi')$ is 
also an admissible bordered--sutured diagram. We will be concerned with the 
type~$\DAA$ trimodules
\[
  \itensor[^{\A (- \arcdiag_1)}]{\BSDAAh (\HD')}{_{\A (\arcdiag_2), \AZ}}, 
  \itensor[^{\A (- \arcdiag_1)}]{\BSDAAh (\HDpH)}{_{\A (\arcdiag_2), \AZ}}.
\]

Noting that there is an obvious relative $\Z$-grading on $\SFC (\betas', 
\betaspH)$, let $\Theta \in \genset (\betas', \betaspH)$ be the top-graded 
generator.  As mentioned at the end of \fullref{ssec:bordered_polygons}, there 
are maps $\itensor[_n]{\delta}{_{\ell_1, \ell_2}}$
for type~$\DAA$ trimodules analogous to $\delta_n$. Observing that the 
differential vanishes on $\SFC (\betas', \betaspH)$, we see that $\Theta$ is 
also a cycle, and so by a type~$\DAA$ version of \fullref{prop:ainf}, the 
morphism given by the collection
\[
  \set{
    \begin{aligned}
      \itensor[_2]{\delta}{_{\ell_1, \ell_2}} (\Theta \tensor \blank \tensor 
      a_1' \tensor \dotsb \tensor a_{\ell_1}' \tensor a_1 \tensor \dotsb 
      \tensor a_{\ell_2}) \colon \SFC (\betas', \betaspH) \tensor \BSDAAh 
      (\alphas', \betas')\\
      \tensor \underbrace{\A (\arcdiag_2) \tensor \dotsb \tensor \A 
        (\arcdiag_2)}_{\ell_1} \tensor \underbrace{\AZ \tensor \dotsb \tensor 
        \AZ}_{\ell_2} \to \A (- \arcdiag_1) \tensor \BSDAAh (\alphas', 
      \betaspH)
    \end{aligned}
  }
\]
is also a type~$\DAA$ homomorphism; we denote this homomorphism by
\[
  \itensor[_2]{\delta}{} (\Theta \tensor \blank) \colon \BSDAAh (\HD') \to 
  \BSDAAh (\HDpH).
\]

Finally, we may glue the Heegaard diagrams $\HD'$ (resp.\ $\HDpH$) and $\HD_k$ 
along $\arcdiag$ to obtain a bordered--sutured diagram $\HD' \union \HD_k$ 
(resp.\ $\HDpH \union \HD_k$). This gives rise to type~$\DA$ bimodules
\[
  \itensor[^{\A (- \arcdiag_1)}]{\BSDAh (\HD' \union \HD_k)}{_{\A 
      (\arcdiag_2)}}, \itensor[^{\A (- \arcdiag_1)}]{\BSDAh (\HDpH \union 
    \HD_k)}{_{\A (\arcdiag_2)}}.
\]
Noting that
\[
  \Theta \tensor \Eta^{k_1} \in \SFC (\betas' \union \betas^{k_1}, \betaspH 
  \union \betas^{k_2})
\]
is a cycle, considerations similar to the above yield a type~$\DA$ homomorphism
\[
  \itensor[_2]{\delta}{} ((\Theta \tensor \Eta^{k_1}) \tensor \blank) \colon 
  \BSDAh (\HD' \union \HD_{k_1}) \to \BSDAh (\HDpH \union \HD_{k_2}).
\]

The discussion in \cite[Section~5]{LipOzsThu16:BFHBranchedDoubleSSII} now 
carries over to our context to give us the following proposition, a pairing 
theorem for triangles.

\begin{proposition}[cf.\ 
  {\cite[Proposition~5.35]{LipOzsThu16:BFHBranchedDoubleSSII}}]
  \label{prop:comm_gen}
  The diagram
  \[
    \xymatrix{
      \BSDAAh (\HD') \boxtensor \BSDh (\HD_{k_1}) 
      \ar[rrrr]^{\itensor[_2]{\delta}{} (\Theta \tensor \blank) \, \boxtensor 
        \, \itensor{\delta}{_2} (\Eta^{k_1} \tensor \blank)} \ar[d]_{\homeq} & 
      & & & \BSDAAh (\HDpH) \boxtensor \BSDh (\HD_{k_2}) \ar[d]^{\homeq}\\
      \BSDAh (\HD' \union \HD_{k_1}) \ar[rrrr]^{\itensor[_2]{\delta}{} ((\Theta 
        \tensor \Eta^{k_1}) \tensor \blank)} & & & & \BSDAh (\HDpH \union 
      \HD_{k_2}),
    }
  \]
  where the vertical arrows are induced by the pairing theorem for bigons 
  \cite[Theorem~8.5.1]{Zar11:BSFH}, commutes up to homotopy.
\end{proposition}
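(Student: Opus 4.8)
The plan is to adapt the proof of \cite[Proposition~5.35]{LipOzsThu16:BFHBranchedDoubleSSII} to the bordered--sutured category, by the same neck-stretching (``time dilation'') argument that establishes the pairing theorems for bigons \cite[Theorem~7.6.1 and Theorem~8.5.1]{Zar11:BSFH}. I would first recall the geometric content of the vertical equivalence in the square: for an almost-complex structure with a sufficiently long neck along the gluing region $\sutF (\arcdiag)$, every holomorphic curve counted by the type~$\DA$ structure maps of $\BSDAh (\HD' \union \HD_k)$ breaks as a matched pair --- a curve on the $\HD'$ side and a curve on the $\HD_k$ side, glued along a sequence of Reeb chords on $\arcdiag$ --- so that the stretched complex equals, on the nose, $\BSDAAh (\HD') \boxtensor \BSDh (\HD_k)$, and the vertical map of \fullref{prop:comm_gen} is the induced change-of-almost-complex-structure homotopy equivalence between the generic and the stretched complexes. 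The content of \fullref{prop:comm_gen} is that this matched-pair picture persists for the triangle maps.

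Concretely, I would stretch the neck along $\arcdiag$ in the glued \emph{triangle} diagram built from $(\Sigma', \alphas', \betas', \betaspH, \dotsc)$ and $(\Sigma, \alphas, \betas^{k_1}, \betas^{k_2}, \dotsc)$, and study the moduli spaces of embedded holomorphic triangles with one input on $\BSDAh (\HD' \union \HD_{k_1})$, marked vertex at $\Theta \tensor \Eta^{k_1}$, and output on $\BSDAh (\HDpH \union \HD_{k_2})$, of expected dimension $1$. Compactness together with the gluing theorem for matched pairs of triangles --- in the form of \cite[Section~5]{LipOzsThu16:BFHBranchedDoubleSSII}, adapted to the sutured setting as sketched in \fullref{ssec:bordered_polygons} --- identifies the ends of these one-manifolds. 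There are two kinds: the ends at which the source degenerates along the long neck, which after passing to the stretched complexes assemble exactly into $\itensor[_2]{\delta}{} (\Theta \tensor \blank) \boxtensor \delta_2 (\Eta^{k_1} \tensor \blank)$ composed with the vertical identification; and the ``internal'' ends already present without a neck, governed by the $\Ainf$-relations of \fullref{prop:ainf} --- breakings into a triangle and a strip, collapses of a side of the source polygon, and Reeb-chord degenerations at boundary punctures. The latter either contribute $\bdy K$ for a homotopy $K$ defined by the count of the corresponding zero-dimensional stretched moduli spaces, or cancel in pairs; any breaking that would split off a disk at the marked vertex vanishes because $\Theta$ is the top-graded generator of $\SFC (\betas', \betaspH)$, on which the differential vanishes, and $\Eta^{k_1}$ is a cycle, exactly as in \fullref{prop:ainf} and the proofs of \fullref{lem:cond1_poly}--\fullref{lem:cond3_poly}. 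Since the $\F{2}$-count of the boundary of a compact one-manifold is zero, the resulting identity is precisely the assertion that the square commutes up to the homotopy $K$.

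Along the way I would verify the bordered--sutured modifications flagged in \fullref{ssec:bordered_polygons}: that with $\HD'$ and $\HDpH$ admissible and $\HD_k$ provincially admissible the glued triangle diagrams are provincially admissible, so that the moduli spaces are compact and the sums defining the maps are finite; that no degeneration creates a curve with positive multiplicity at a region adjacent to $\bdy \Sigma \setminus \orarcs$; and that the polygon index formula \cite[Proposition~4.9]{LipOzsThu16:BFHBranchedDoubleSSII} adds correctly with Zarev's index formula \cite[Proposition~5.3.5]{Zar11:BSFH} across the neck, so that the triangle counts on the glued diagram occur in the expected degree. The main obstacle is the analytic bookkeeping of the stretched moduli spaces --- transversality for the interpolating family of complex structures on the source polygons, together with the gluing theorem for matched pairs of holomorphic triangles along Reeb chords --- carried out in the presence of the sutured boundary. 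This is, however, entirely parallel to the bordered case of \cite[Section~5]{LipOzsThu16:BFHBranchedDoubleSSII} and to the proof of the pairing theorems for bigons in \cite{Zar11:BSFH}, and no genuinely new analytic phenomena arise beyond the bookkeeping just described.
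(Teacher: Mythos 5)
Your proposal is correct and takes essentially the same route as the paper: the paper offers no detailed argument for this proposition, simply asserting that the discussion of \cite[Section~5]{LipOzsThu16:BFHBranchedDoubleSSII} carries over to the bordered--sutured setting, and your sketch is a faithful expansion of exactly that adaptation (degeneration analysis of one-dimensional moduli spaces in a stretched/dilated family, with the admissibility and index bookkeeping you flag). The only caveat is terminological: the Lipshitz--Ozsv\'ath--Thurston argument is a time-dilation deformation of the matching condition rather than a literal neck-stretch in the target, but you acknowledge this and it does not affect the structure of the argument.
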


While this proposition may be generalized by replacing $\BSDh$ with trimodules 
$\BSDDAh$, or by slightly relaxing the admissibility of the diagrams, the 
version stated suffices for our purposes.

The discussion in \cite[Section~5]{LipOzsThu16:BFHBranchedDoubleSSII} in fact 
gives a pairing theorem for general polygons, which would be a necessary 
ingredient if we wished to identify the cube-of-resolutions spectral sequence 
\cite[Section~5]{BalLev12:HFKUnorientedSkeinIterate} (see also  
\cite[Section~5]{Won17:GHUnorientedSkein}) arising from the iteration of the 
exact triangle in \cite{Man07:HFKUnorientedSkein} with one arising from 
\fullref{thm:main} in the bordered context. However, we will not be concerned 
with this identification.

For clarity in the sequel, we provide a variant of \fullref{prop:comm_gen} 
here.

\begin{proposition}
  \label{prop:comm_gen_2}
  The diagram
  \[
    \xymatrix{
      \BSDAAh (\HD') \boxtensor \BSDh (\HD_{k_1}) 
      \ar[rrrr]^{\mathcenter{\scriptstyle \Id_{\BSDAAh (\HD')}} \, \boxtensor \, 
        \itensor{\delta}{_2} (\Eta^{k_1} \tensor \blank)} \ar[d]_{\homeq} & & & 
      & \BSDAAh (\HD') \boxtensor \BSDh (\HD_{k_2}) \ar[d]^{\homeq}\\
      \BSDAh (\HD' \union \HD_{k_1}) \ar[rrrr]^{\itensor[_2]{\delta}{} ((\Theta 
        \tensor \Eta^{k_1}) \tensor \blank)} & & & & \BSDAh (\HDpH \union 
      \HD_{k_2}),
    }
  \]
  where the left vertical arrow is induced by the pairing theorem for bigons 
  \cite[Theorem~8.5.1]{Zar11:BSFH}, and the right vertical arrow is induced by 
  the deformation of almost complex structure relating $\HD'$ and $\HDpH$ and 
  the pairing theorem, commutes up to homotopy.
\end{proposition}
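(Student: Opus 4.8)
The plan is to deduce \fullref{prop:comm_gen_2} from \fullref{prop:comm_gen} by factoring the top and right arrows of the latter. Write $c \colon \BSDAAh (\HD') \to \BSDAAh (\HDpH)$ for the morphism $\itensor[_2]{\delta}{} (\Theta \tensor \blank)$ that forms the first factor of the top arrow of \fullref{prop:comm_gen}. Exactly as in the proof of \fullref{lem:cond3_poly}, the bordered--sutured transcription of \cite[Proposition~11.4]{Lip06:HFCylindrical} identifies $c$ with the continuation homomorphism induced by the deformation of almost complex structure associated to the Hamiltonian isotopy carrying $\betas'$ to $\betaspH$; in particular $c$ is a homotopy equivalence. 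The first thing I would check is that the right vertical arrow of \fullref{prop:comm_gen_2} — the one ``induced by the deformation of almost complex structure relating $\HD'$ and $\HDpH$ and the pairing theorem'' — is homotopic to the composite of $c \boxtensor \Id_{\BSDh (\HD_{k_2})}$ with the pairing homotopy equivalence $\BSDAAh (\HDpH) \boxtensor \BSDh (\HD_{k_2}) \homeq \BSDAh (\HDpH \union \HD_{k_2})$ that appears as the right vertical arrow of \fullref{prop:comm_gen}. This is the naturality of the pairing theorem for bigons with respect to deformations of almost complex structure that only affect the $\HD'$ factor (part of the package of \cite[Section~5]{LipOzsThu16:BFHBranchedDoubleSSII}), combined with the identification of $c$ with that deformation just recalled. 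Being a composite of two homotopy equivalences, the right vertical arrow is itself one.

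Next I would record the (essentially formal) factorization of the top arrow of \fullref{prop:comm_gen}: by functoriality of the box tensor product of morphisms, with one slot on each side carrying an identity, one has, up to homotopy,
\[
  \itensor[_2]{\delta}{} (\Theta \tensor \blank) \boxtensor \itensor{\delta}{_2} (\Eta^{k_1} \tensor \blank) \homeq \paren{c \boxtensor \Id} \comp \paren{\Id_{\BSDAAh (\HD')} \boxtensor \itensor{\delta}{_2} (\Eta^{k_1} \tensor \blank)}.
\]
The second factor on the right-hand side is precisely the top arrow of \fullref{prop:comm_gen_2}.

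The proof then concludes with a diagram chase. Tracing an element of $\BSDAAh (\HD') \boxtensor \BSDh (\HD_{k_1})$ around \fullref{prop:comm_gen_2} first along the top and then down the right, and applying the two factorizations above, one obtains the left pairing equivalence $\BSDAAh (\HD') \boxtensor \BSDh (\HD_{k_1}) \homeq \BSDAh (\HD' \union \HD_{k_1})$ followed by $\itensor[_2]{\delta}{} ((\Theta \tensor \Eta^{k_1}) \tensor \blank)$; but this is exactly the result of tracing the same element down the left and then along the bottom of \fullref{prop:comm_gen}, which commutes up to homotopy by that proposition. Since the bottom arrows and the left vertical arrows of the two squares coincide on the nose, this shows that \fullref{prop:comm_gen_2} commutes up to homotopy. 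The step I expect to be the main obstacle is the first one: verifying that ``deform the almost complex structure, then pair'' agrees up to homotopy with ``pair, apply the triangle map at the top-graded generator, then pair back.'' This is where the bordered--sutured analog of \cite[Proposition~11.4]{Lip06:HFCylindrical} and the naturality statements implicit in \cite[Section~5]{LipOzsThu16:BFHBranchedDoubleSSII} are genuinely used; everything else is bookkeeping with box tensor products together with an appeal to \fullref{prop:comm_gen}.
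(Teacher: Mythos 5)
Your proposal is correct and follows essentially the same route as the paper's proof: factor the box tensor morphism $\itensor[_2]{\delta}{} (\Theta \tensor \blank) \boxtensor \itensor{\delta}{_2} (\Eta^{k_1} \tensor \blank)$ through $\Id_{\BSDAAh (\HD')} \boxtensor \itensor{\delta}{_2} (\Eta^{k_1} \tensor \blank)$, identify $\itensor[_2]{\delta}{} (\Theta \tensor \blank)$ with the continuation map induced by the deformation of almost complex structure via the bordered--sutured adaptation of \cite[Proposition~11.4]{Lip06:HFCylindrical}, and then appeal to \fullref{prop:comm_gen}. The only cosmetic difference is that you flag the compatibility of "deform then pair" with "pair then deform" as a separate naturality check, whereas the paper absorbs it into the definition of the right vertical arrow.
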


\begin{proof}
  This follows directly from \fullref{prop:comm_gen} by precomposing with the 
  commutative diagram that shows the definition of the box tensor morphism 
  $\itensor[_2]{\delta}{} (\Theta \tensor \blank) \boxtensor 
  \itensor{\delta}{_2} (\Eta^{k_1} \tensor \blank)$ (see 
  \cite[Section~2.3.2]{LipOzsThu15:BFHBimodules}),
  \[
    \xymatrix{
      & & & & \BSDAAh (\HD') \boxtensor \BSDh (\HD_{k_2}) 
      \ar[d]^{\itensor[_2]{\delta}{} (\Theta \tensor \blank) \, \boxtensor \, 
        \mathcenter{\scriptstyle \Id_{\BSDh (\HD_{k_2})}}}\\
      \BSDAAh (\HD') \boxtensor \BSDh (\HD_{k_1}) 
      \ar[rrrru]^(0.6){\mathcenter{\scriptstyle \Id_{\BSDAAh (\HD')}} \, 
        \boxtensor \, \itensor{\delta}{_2} (\Eta^{k_1} \tensor \blank) 
        \hspace*{0.5in}} \ar[rrrr]^{\itensor[_2]{\delta}{} (\Theta \tensor 
        \blank) \, \boxtensor \, \itensor{\delta}{_2} (\Eta^{k_1} \tensor 
        \blank)} & & & & \BSDAAh (\HDpH) \boxtensor \BSDh (\HD_{k_2}),
    }
  \]
  and adapting \cite[Proposition~11.4]{Lip06:HFCylindrical} to identify 
  $\itensor[_2]{\delta}{} (\Theta \tensor \blank)$ with the homotopy 
  equivalence induced by the deformation of almost complex structure, as in the 
  last paragraph of the proof of \fullref{lem:cond3_poly}.

  Alternatively, one could directly unpack the definitions in 
  \cite[Theorem~7]{LipOzsThu16:BFHBranchedDoubleSSII}, which was used to prove 
  \cite[Proposition~5.35]{LipOzsThu16:BFHBranchedDoubleSSII} and hence 
  \fullref{prop:comm_gen}.
\end{proof}

We are now ready to prove that the maps $F_k$ in \fullref{thm:main} could be 
obtained by counting holomorphic polygons in a bordered--sutured multi-diagram 
that encodes $\Yk$, instead of doing so for $\Bk$ and invoking the pairing 
theorem.  

\begin{theorem}
  \label{thm:comm_spec}
  Let $\HD'$ be the admissible diagram for $\Y'$ in the proof of 
  \fullref{thm:main} in \fullref{sec:setup}, and define $\betaspH$ and $\HDpH$ 
  as in this section. For $k \in \set{\infty, 0, 1}$, the diagram
  \[
    \xymatrix{
      \BSDAAh (\HD') \boxtensor \BSDh (\HD_{k}) \ar[rrrr]^{%
        \mathcenter{\scriptstyle \Id_{\BSDAAh (\HD')}} \, \boxtensor \, f_{k}} 
      \ar[d]_{\homeq} & & & & \BSDAAh (\HD') \boxtensor \BSDh (\HD_{k+1}) 
      \ar[d]^{\homeq}\\
      \BSDAh (\HD' \union \HD_{k}) \ar[rrrr]^{\itensor[_2]{\delta}{} ((\Theta 
        \tensor \Etaknext) \tensor \blank)} & & & & \BSDAh (\HDpH \union 
      \HD_{k+1}),
    }
  \]
  where the left vertical arrow is induced by the pairing theorem for bigons 
  \cite[Theorem~8.5.1]{Zar11:BSFH}, and the right vertical arrow is induced by 
  the deformation of almost complex structure relating $\HD'$ and $\HDpH$ and 
  the pairing theorem, commutes up to homotopy.
\end{theorem}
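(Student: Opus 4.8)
The plan is to deduce \fullref{thm:comm_spec} from \fullref{prop:comm_gen_2} by taking $k_1 = k$ and $k_2 = k+1$, and by identifying the abstract homomorphism $\itensor{\delta}{_2}(\Eta^{k_1} \tensor \blank)$ with the map $f_k$ defined in \fullref{ssec:proof_prop}. Recall that $f_k$ was \emph{defined} by $f_k(\w) = \delta_2(\Etaknext \tensor \w)$, where $\Etaknext = \Etaknext_0 \dirsum \Etaknext_1$ is the sum of the two generators of $\SFC(\betask, \betasnext)$ associated to the intersection points $\etak_0$ and $\etak_1$. Thus the first step is to observe that $\Etaknext$, being a sum of \emph{both} generators of $\SFC(\betask,\betasnext)$, is a cycle in that chain complex: by \fullref{eqn:m_1} the differential $m_1$ vanishes, so every chain is a cycle, and in particular $\Etaknext$ is. Hence $\itensor{\delta}{_2}(\Etaknext \tensor \blank)$ is precisely the type~$D$ homomorphism appearing in \fullref{prop:comm_gen_2} with $\Eta^{k_1}$ taken to be $\Etaknext$.

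The second step is to match up the rest of the data. The multi-diagram $\HD' \union \HD_k$ is admissible (since $\HD'$ is admissible by construction and $\HD_k$ is as in \fullref{ssec:tancomp}), and the gluing is along $\arcdiag$, so \fullref{prop:comm_gen_2} applies verbatim. The bottom arrow of \fullref{prop:comm_gen_2} is $\itensor[_2]{\delta}{}((\Theta \tensor \Eta^{k_1}) \tensor \blank)$; substituting $\Eta^{k_1} = \Etaknext$ gives exactly $\itensor[_2]{\delta}{}((\Theta \tensor \Etaknext) \tensor \blank)$, which is the bottom arrow in the statement of \fullref{thm:comm_spec}. The vertical arrows also match: the left one is induced by the pairing theorem for bigons \cite[Theorem~8.5.1]{Zar11:BSFH}, and the right one is induced by the deformation of almost complex structure relating $\HD'$ and $\HDpH$ together with the pairing theorem, precisely as in \fullref{prop:comm_gen_2}. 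The top arrow of \fullref{prop:comm_gen_2} is $\mathcenter{\scriptstyle \Id_{\BSDAAh (\HD')}} \boxtensor \itensor{\delta}{_2}(\Etaknext \tensor \blank) = \Id_{\BSDAAh(\HD')} \boxtensor f_k$, which is the top arrow in \fullref{thm:comm_spec}. Therefore the commutative square in \fullref{thm:comm_spec} is just the instance of \fullref{prop:comm_gen_2} obtained by these substitutions, and it commutes up to homotopy.

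I do not expect a genuine obstacle here: the content of the theorem is entirely packaged in \fullref{prop:comm_gen_2}, and the theorem is its specialization to the diagrams $\HD'$, $\HD_k$, and the particular cycle $\Etaknext$. The only point requiring a small argument—rather than pure bookkeeping—is the verification that $\Etaknext$ is a cycle, which, as noted, is immediate from \fullref{eqn:m_1} of \fullref{lem:m_123}. (One should also note in passing that the generator $\Etaknext$, as an element of $\SFC(\betask,\betasnext)$, is the same in the bordered–sutured multi-diagram $\HDab$ and in the sutured multi-diagram $\HDbH$, so the computation of $m_1$ in \fullref{lem:m_123} is the relevant one; this is the kind of routine compatibility check that does not merit detailed exposition.) With this, the proof reads: apply \fullref{prop:comm_gen_2} with $k_1 = k$, $k_2 = k+1$, and $\Eta^{k_1} = \Etaknext$, noting that $\Etaknext$ is a cycle by \fullref{eqn:m_1} and that $\delta_2(\Etaknext \tensor \blank) = f_k$ by definition; the resulting diagram is exactly the one in the statement.
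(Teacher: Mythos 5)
Your proposal is correct and follows exactly the paper's route: the paper's proof of \fullref{thm:comm_spec} is the one-line statement that it is a direct application of \fullref{prop:comm_gen_2}, and you have simply made explicit the substitutions $k_1 = k$, $k_2 = k+1$, $\Eta^{k_1} = \Etaknext$, together with the (correct) verification via \fullref{eqn:m_1} that $\Etaknext$ is a cycle so that the hypothesis of that proposition is met.
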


\begin{proof}
  This is a direct application of \fullref{prop:comm_gen_2}.
\end{proof}

The significance of \fullref{thm:comm_spec} is the following. By combining the 
diagrams $\HD' \union \HD_k$, $\HDpH \union \HD_{k+1}$, and the analogous 
diagram for $k+2$, one obtains an admissible bordered--sutured multi-diagram 
encoding $\set{\Yk}_{k \in \set{\infty, 0, 1}}$. By counting holomorphic 
polygons in this multi-diagram, one could directly prove \fullref{thm:main}.  
(Of course, in this approach, one would not in general be able to obtain an 
explicit description of the maps involved as in 
\fullref{sec:skein_via_computation}.)
The resulting type~$\DA$ homomorphisms are given exactly by 
$\itensor[_2]{\delta}{} ((\Theta \tensor \Etaknext) \tensor \blank)$.  
\fullref{thm:comm_spec} shows that these homomorphisms in fact agree with the 
ones we used in our proof of \fullref{thm:main}.

\subsection{The exact triangles agree}
\label{ssec:triangles_agree}

As mentioned in \fullref{sec:introduction}, Manolescu 
\cite{Man07:HFKUnorientedSkein} constructs from a connected link projection a 
special Heegaard diagram for $(S^3, L)$. We do not repeat the complete 
definitions here, and instead direct the reader to 
\cite[Section~2]{Man07:HFKUnorientedSkein}; we provide a quick summary in this 
paragraph.  In this construction, the Heegaard surface is the boundary of a 
regular neighborhood of the singularization of the link projection, with an 
$\alpha$-circle for each bounded region of the link diagram, and a 
$\beta$-circle $\beta_v$ for each crossing. (The unbounded region is denoted 
$A_0$.) We denote by $\piece_v$ the piece of the Heegaard diagram associated to 
the crossing $v$, as shown in \cite[Figure~2]{Man07:HFKUnorientedSkein}. It 
also involves a choice of a distinguished edge $e$, as well as a collection of 
edges $\set{s_i}$, in the link projection.  At each of these edges, the local 
diagram is modified as in \cite[Figure~3]{Man07:HFKUnorientedSkein}: A 
meridional $\beta$-curve is added, with a puncture on each side; we further add 
an $\alpha$-curve encircling the two punctures for each edge $s_i$.

In accordance with \fullref{rmk:dir_arrows}, we must slightly modify the 
construction in \cite{Man07:HFKUnorientedSkein} to fit with our convention, as 
follows. Since the difference in the conventions switches the roles of the 
$\alpha$- and $\beta$-handlebodies, the special Heegaard diagram for $(S^3, L)$ 
in \cite{Man07:HFKUnorientedSkein} in fact encodes $(- S^3, L) \isom (S^3, m 
(L))$ in our convention, where $m (L)$ denotes the mirror of a link $L$. To 
compensate for the difference, we replace $\piece_v$ with the piece $\piece_v'$ 
of Heegaard surface constructed for $v$ with the opposite crossing information; 
in other words, we replace the $\beta$-circle $\beta_v$ with the circle 
$\beta_v'$, unique up to homotopy, that is not homotopic to $\beta_v$ in the 
$4$-punctured sphere, and that separates the four punctures as $\beta_v$ does.  
The rest of the construction is unchanged.

\fullref{thm:agree} now follows from \fullref{thm:comm_spec}:

\begin{proof}[Proof of \fullref{thm:agree}, without gradings]
  In \cite{Man07:HFKUnorientedSkein}, the maps $\FMank$ are constructed as 
  follows. First, a special Heegaard diagram $\HDMan_\infty$ for $L_\infty$ is 
  constructed from a connected link projection with a choice of $e$ and 
  $\set{s_i}$, with $\abs{\set{s_i}} = m - 1$.  Then $\HDMan_0$ and $\HDMan_1$ 
  are constructed by changing the $\beta$-circle $\beta_v'$ in $\HDMan_\infty$ 
  near the crossing $v$ in question, and modifying all other $\beta$-circles by 
  a small Hamiltonian perturbation. The maps $\FMank$ are then defined by 
  holomorphic triangle counts between $\HDMank$ and $\HDMannext$, similar to 
  the morphisms $f_k$ in \fullref{ssec:proof_prop}.

  In light of \fullref{thm:comm_spec}, we will find a link projection for 
  $L_\infty$ and a choice of $e$ and $\set{s_i}$ with $\abs{\set{s_i}} = m$, 
  such that the associated special Heegaard diagram $\HDManpinfty$ can be 
  decomposed into two bordered--sutured Heegaard diagrams $\HD'$ and 
  $\HD_\infty$, where $\HD'$ is admissible and $\HD_\infty$ is the diagram in 
  \fullref{sec:setup}. We illustrate how to do so.

  Fix a connected link projection $D$ for $L_\infty$, with a crossing $v$ at 
  which the resolutions of $D$ are projections for $L_0$ and $L_1$. Using 
  Reidemeister moves, find another connected link projection $D'$ for 
  $L_\infty$ such that exactly one of the four regions adjacent to $v$ is the 
  unbounded region $A_0$ in $\R^2$. In fact, we may require that, after 
  rotating the diagram to match $v$ with the left of 
  \fullref{fig:special_diag}, the top region is $A_0$ and the other three 
  regions are bounded.

  %%%%%%%%%%%%%%%%%%%%%%%%%%%%%%%%%%%%%%%%%%%%%%%%%%%%%%%
  \begin{figure}[!htbp]
    \centering
    \labellist
      \footnotesize\hair 2pt
      \pinlabel {\small $A_0$} at 32 133
      \pinlabel {\small $e$} [t] at 4 132
      \pinlabel {\small $s_3$} [t] at 60 131
 	    \pinlabel {\small $s_1$} [b] at 5 69
 	    \pinlabel {\small $s_2$} [b] at 60 68
 	    \pinlabel \textcolor{red}{$1$} [l] at 334 122
 	    \pinlabel \textcolor{red}{$6$} [r] at 401 132
 	    \pinlabel \textcolor{red}{$5$} [b] at 393 107
 	    \pinlabel {\scriptsize $7$} [b] at 416 135
 	    \pinlabel {\scriptsize $8$} [r] at 413 152
 	    \pinlabel \textcolor{red}{$2$} [b] at 338 62
 	    \pinlabel \textcolor{red}{$3$} [b] at 377 62
 	    \pinlabel \textcolor{red}{$4$} [b] at 398 62
 	    \pinlabel {\scriptsize $2$} [l] at 318 51
 	    \pinlabel {\scriptsize $1$} [t] at 319 64
 	    \pinlabel {\scriptsize $3$} [r] at 335 47
 	    \pinlabel {\scriptsize $5$} [b] at 413 45
 	    \pinlabel {\scriptsize $4$} [l] at 400 47
 	    \pinlabel {\scriptsize $6$} [l] at 412 59
    \endlabellist
    \includegraphics{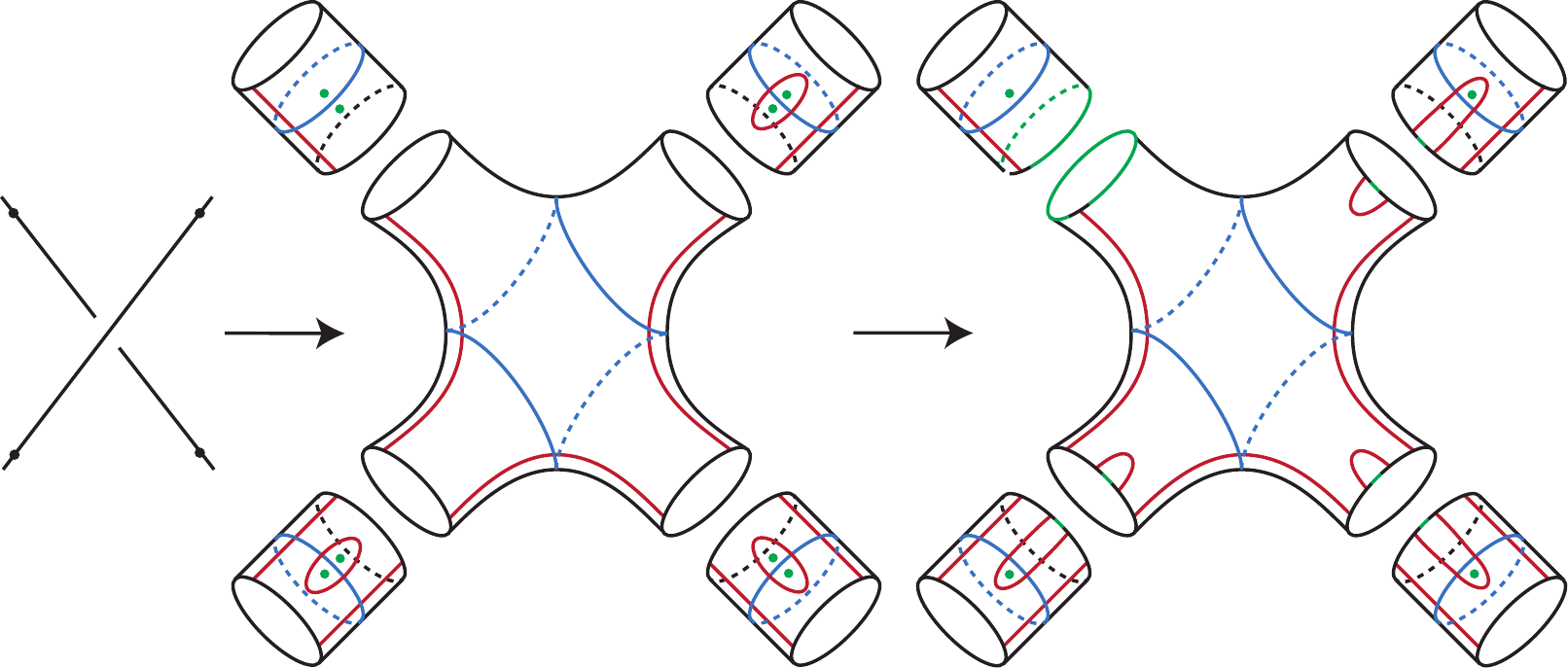}
    \caption{Left: The crossing $v$ in a connected projection of $L_\infty$, 
      with all adjacent edges marked.  Center: The piece $\piece_v'$ associated 
      with $v$ in the special Heegaard diagram $\HDManpinfty$, together with 
      the local modifications according to the choice of $e$ and $\set{s_i}$.  
      Right: After a small isotopy of $\HDManpinfty$, we may identify 
      $\piece_v'$ with the bordered--sutured diagram $\HD_\infty$ in 
      \fullref{sec:setup}.}
    \label{fig:special_diag}
  \end{figure}
  %%%%%%%%%%%%%%%%%%%%%%%%%%%%%%%%%%%%%%%%%%%%%%%%%%%%%%%

  Viewing $D'$ as a $4$-valent graph, we choose the distinguished edge $e$ to 
  be that to the top left of $v$, and choose $s_1$, $s_2$, and $s_3$ to be the 
  other three edges adjacent to $v$; see the left of 
  \fullref{fig:special_diag}. Complete this to a minimal collection 
  $\set{s_i}$; since the two strands at $v$ belong to at most two components, 
  the cardinality of $\set{s_i}$ will be one larger than the minimum, meaning 
  $\abs{\set{s_i}} = m$. Thus, $\HDManpinfty$, and hence $\HDManpzero$ and 
  $\HDManpone$, will be once quasi-stabilized when compared to the Heegaard 
  diagrams $\HDMank$ used in \cite{Man07:HFKUnorientedSkein}.

  Now using $D'$, construct a special Heegaard diagram $\HDManpinfty$ according 
  to \cite[Section~2]{Man07:HFKUnorientedSkein}. Under this construction, the 
  piece $\piece_v'$ of the Heegaard diagram associated with $v$ is as in the 
  center of \fullref{fig:special_diag}. In the same figure, the local diagrams 
  associated to the edges $e$, $s_1$, $s_2$, and $s_3$ are also shown.  
  $\HDManpinfty$ is formed by attaching these pieces, as well as other pieces 
  not shown, to each other.

  After a small isotopy of $\HDManpinfty$ (or equivalently, by choosing a 
  different place to cut it apart), we obtain the local diagram as in the right 
  of \fullref{fig:special_diag}. In this diagram, the $4$-punctured sphere in 
  the center, which we also denote by $\piece_v'$ by a slight abuse of 
  notation, intersects with three more $\alpha$-curves than before. Moreover, 
  four of the punctures in the previous picture are now each manifested as an 
  arc on $\bdy \piece_v'$ together with a corresponding arc on the boundary of 
  another piece.  Now it is easy to check that $\piece_v'$ is a provincially 
  admissible bordered--sutured diagram, with the arcs mentioned above serving as 
  $\bdy \Sigma \setminus \arcdiag$. In fact, $\piece_v'$ now coincides with 
  $\HD_\infty$ (see \fullref{fig:hd_infty}); in the right of 
  \fullref{fig:special_diag}, we have included the numbering of the 
  $\alpha$-arcs and some of the Reeb chords to help the reader verify this 
  identification. We may now declare the rest of $\HDManpinfty$ to be $\HD'$; 
  then the admissibility of $\HD'$ follows from that of $\HDManpinfty$. Note 
  that, gluing $\HD'$ with $\piece_v' \cong \HD_\infty$ to obtain 
  $\HDManpinfty$, each arc in $\bdy \Sigma \setminus \arcdiag$ combines with 
  the corresponding arc to form the boundary of the corresponding puncture in 
  $\HDManpinfty$.

  Writing $\HDManp_k$ as $\HD' \union \HD_k$, we see that $\HDManp_{k+1}$ is 
  $\HDpH \union \HD_{k+1}$. Thus, we may directly apply \fullref{thm:comm_spec} 
  to conclude the proof.
\end{proof}

%%%%%%%%%%%%%%%%%%%%%%%%%%%%%%%%%%%%%%%%%%%%%%%%%%%%%%%
\section{Gradings}
\label{sec:gradings}
%%%%%%%%%%%%%%%%%%%%%%%%%%%%%%%%%%%%%%%%%%%%%%%%%%%%%%%

We have avoided any discussion of gradings so far, primarily because they are 
somewhat more complicated than usual in (bordered Floer theory and) 
bordered--sutured Floer theory.

In the following, we first give a brief overview of these gradings in 
\fullref{ssec:gradings_bsfh}, before proceeding to prove graded versions of 
\fullref{thm:main} and \fullref{cor:arbitrary}.  The strategy we employ 
reflects that of our proof of the ungraded versions in \fullref{sec:setup}; in 
particular, we will first endow \fullref{prop:elementary} with gradings, and 
then use the pairing theorem to extend these gradings to the general case.  
Thus, we will compute the gradings on the algebra $\AZ$ in 
\fullref{ssec:gradings_alg} and the gradings on the modules $\BSDh (\Bk)$ in 
\fullref{ssec:gradings_mod}, for $\arcdiag$ and $\Bk$ as defined in 
\fullref{sec:setup}. Finally, we state the graded statements and complete the 
proofs in \fullref{ssec:gradings_proof}.

\subsection{Gradings in bordered--sutured Floer theory}
\label{ssec:gradings_bsfh}

Since \fullref{prop:elementary} is stated for type~$D$ modules, we focus on the 
gradings on type~$D$ structures here. Again, we omit many details, and direct 
the interested reader to \cite{Zar11:BSFH, LipOzsThu18:BFH}. Note that we 
mostly follow the notation in \cite{Zar11:BSFH}, which differs from that in 
\cite{LipOzsThu18:BFH}. (See, however, \fullref{rmk:half_grading}.)

Let $\Y = (Y, \Gamma, -\arcdiag, \phi)$ be a bordered--sutured manifold, and 
$\HD = (\Sigma, \alphas, \betas, -\arcdiag, \psi)$ be a bordered--sutured 
diagram for $\Y$. First of all, the type~$D$ module $\BSDh (\Y) \homeq \BSDh 
(\HD)$ over $\AZ$ decomposes into direct-sum components corresponding to 
relative $\SpinC$-structures:
\[
  \BSDh (\HD) = \bigdirsum_{\spinc \in \SpinC (Y, \bdy Y \setminus F (- 
    \arcdiag))} \BSDh (\HD, \spinc)
\]
Here, the space $\SpinC (Y, \bdy Y \setminus F (-\arcdiag))$ of relative 
$\SpinC$-structures is an affine copy of $H^2 (Y, \bdy Y \setminus F (- 
\arcdiag)) \isom H_1 (Y, F (-\arcdiag))$. This decomposition arises from an 
assignment $\x \mapsto \spinc (\x)$ of an Euler structure to each generator (by 
the standard construction of a vector field), and the observation that the 
differential $\delta_{\BSDh (\HD)}$ respects this assignment. The set of all 
generators $\x$ with $\spinc (\x) = \spinc$ is denoted $\genset (\HD, \spinc)$.  
This direct sum decomposition is invariant for different Heegaard diagrams and 
choices of complex structures.

The structure of gradings in bordered--sutured Floer theory is as follows. The 
differential algebra $\AZ$ is graded by a possibly non-Abelian group $G$ with a 
fixed central element $\lambda \in G$, via a map $g \colon \AZ \to G$, such 
that for homogeneous elements $a$ and $b$, we have
\begin{itemize}
  \item $g (a b) = g (a) g (b)$; and
  \item $g (d (a)) = \lambda^{-1} g (a)$.
\end{itemize}
Then, each $\BSDh (\HD, \spinc)$ has a (separate) \emph{relative grading in a 
  $G$-set}, by which we mean a left action by $G$ on some set $S$, and a map $g 
\colon \genset (\HD, \spinc) \to S$, such that, for a homogeneous element $a 
\in \AZ$ and a generator $\x \in \genset (\HD, \spinc)$, we have
\begin{itemize}
  \item $g (a \tensor \x) = g (a) \cdot g (\x)$; and
  \item $g (\delta (\x)) = \lambda^{-1} \cdot g (\x)$.
\end{itemize}
Two relative gradings $g$ and $g'$, respectively in $G$-sets $S$ and $S'$, are 
equivalent if there exists a bijective $G$-set map $\phi \colon S \to S'$ that 
commutes with both $g$ and $g'$.

Here, the story gets one more twist. There are in fact two related but distinct 
gradings on $\AZ$, and two corresponding ways to grade each $\BSDh (\HD, 
\spinc)$. More concretely, choose a generator $\w_0 \in \genset (\HD, \spinc)$.  
One defines the (possibly non-Abelian) group $\Gr (\arcdiag)$ and subgroup 
$\Grr (\arcdiag)$, and further defines (not necessarily normal) subgroups $\grP 
(\w_0) \subset \Gr (\arcdiag)$ and $\grPr_r (\w_0) \subset \Grr (\arcdiag)$.  
Denoting the left cosets by
\begin{align*}
  \Gr (\HD, \spinc)& = \Gr (\arcdiag) / \grP (\w_0),\\
  \Grr_r (\HD, \spinc)& = \Grr (\arcdiag) / \grPr_r (\w_0),
\end{align*}
there is a natural left action by $\Gr (\arcdiag)$ on $\Gr (\HD, \spinc)$, and 
by $\Grr_r (\arcdiag)$ on $\Grr_r (\HD, \spinc)$. One then defines
\begin{multicols}{2}
  \begin{itemize}
    \item A map $\gr \colon \AZ \to \Gr (\arcdiag)$; and
    \item A map $\gr \colon \genset (\HD, \spinc) \to \Gr (\HD, \spinc)$,
  \end{itemize}
  \columnbreak
  \begin{itemize}
    \item A map $\grr_r \colon \AZ \to \Grr (\arcdiag)$; and
    \item A map $\grr_r \colon \genset (\HD, \spinc) \to \Grr_r (\HD, \spinc)$,
  \end{itemize}
\end{multicols}
\noindent such that $\gr$ (resp.\ $\grr_r$) provides a grading on $\AZ$ and a 
relative grading on $\BSDh (\HD, \spinc)$. Unfortunately, while $\gr$ is 
independent of all choices made in its construction, the grading group $\Gr 
(\arcdiag)$ is too large to support a graded version of the pairing theorem; on 
the other hand, while $\grr_r$ allows for a graded pairing theorem, it depends 
on some \emph{refinement data} $r$ on $\AZ$ (known as a \emph{grading 
  reduction} in \cite{Zar11:BSFH}), written as a subscript above.  Since we 
will use the pairing theorem, we will focus on the $(\grr_r)$-graded versions 
when we compute the gradings below.

To end this subsection, we mention here the structure of the graded pairing 
theorem.  Let $\Y' = (Y', \Gamma', \arcdiag_1 \union \arcdiag_2 \union 
\arcdiag, \phi')$ be another bordered--sutured manifold, and let $\Y''$ be the 
bordered--sutured manifold obtained by gluing $\Y'$ and $\Y$ along $\sutF 
(\arcdiag)$, with underlying manifold $Y''$. Now let $\HD'$ be a bordered 
sutured diagram for $\Y'$, and let $\spinc'$ be a relative $\SpinC$-structure 
on $(Y', \bdy Y' \setminus F (\arcdiag_1 \union \arcdiag_2 \union \arcdiag))$.  
Choosing a generator $\w_0' \in \genset (\HD', \spinc')$, then, $\BSDAAh (\HD', 
\spinc')$ has a relative grading in
\[
  \Grr_{r_1, r_2, r} (\HD', \spinc') = \grPr_{r_1, r_2, r} (\w_0') \rcoset \Grr 
  (\arcdiag_1 \union \arcdiag_2 \union \arcdiag),
\]
which has a left action by $\Grr (-\arcdiag_1)$ (equivalent to a right action 
by $\Grr (\arcdiag_1)$) and a right action by $\Grr (\arcdiag_2)$, and most 
pertinently, a right action by $\Grr (\arcdiag)$.
 
Suppose that $\spinc'$ is compatible with $\spinc \in \SpinC (Y, \bdy Y 
\setminus F (- \arcdiag))$, i.e.\ that there exists some $\spinc'' \in \SpinC 
(Y'', \bdy Y'' \setminus F (\arcdiag_1 \union \arcdiag_2))$ whose restrictions 
to $Y'$ and $Y$ are $\spinc'$ and $\spinc$ respectively. The set of all such 
$\SpinC$-structures $\spinc''$, which we denote by $\SpinC (Y'', \bdy Y'' 
\setminus F (\arcdiag_1 \union \arcdiag_2), \spinc', \spinc)$, is an affine 
copy of
\[
  \Im (\iota_* \colon H_1 (F (\arcdiag)) \to H_1 (Y'', F (\arcdiag_1 \union 
  \arcdiag_2))).
\]
(Here, we have implicitly used Poincar\'e duality to regard the set of 
$\SpinC$-structures as an affine space of first homology, rather than the usual 
second cohomology.) Now Let $\w_0' \in \genset (\HD', \spinc')$ and $\w_0 \in 
\genset (\HD, \spinc)$.  Then, viewing $\Grr (\arcdiag)$ as a subgroup of $\Grr 
(\arcdiag_1 \union \arcdiag_2 \union \arcdiag)$, the box tensor product 
$\BSDAAh (\HD', \spinc') \boxtensor \BSDh (\HD, \spinc)$ has a relative grading 
$\grbox_{r_1, r_2, r}$ with values in
\[
  \Grr_{r_1, r_2, r} (\HD', \HD, \spinc', \spinc) = \grPr_{r_1, r_2, r} (\w_0') 
  \rcoset \Grr (\arcdiag_1 \union \arcdiag_2 \union \arcdiag) / \grPr_r (\w_0).
\]
The key feature of this grading, as shown in \cite{LipOzsThu18:BFH, 
  Zar11:BSFH}, is the following. Suppose the idempotents of $\w_0'$ and $\w_0$ 
match along $\arcdiag$; then there is a projection $\pi \colon \Grr_{r_1, r_2, 
  r} (\HD', \HD, \spinc', \spinc) \to \SpinC (Y'', \bdy Y'' \setminus F 
(\arcdiag_1 \union \arcdiag_2))$, such that
\begin{itemize}
  \item $\pi$ distinguishes different $\SpinC$-structures $\spinc''$ in $(Y'', 
    \bdy Y'' \setminus F (\arcdiag_1 \union \arcdiag_2))$ that restrict to 
    $\spinc'$ in $Y'$ and $\spinc$ in $Y$;
  \item Each fiber $\pi^{-1} (\spinc'')$ can be identified with $\Grr_{r_1, 
      r_2} (\HD' \union \HD, \spinc'')$; and
  \item The homotopy equivalence
    \[
      \Phi \colon \BSDAAh (\HD', \spinc') \boxtensor \BSDh (\HD, \spinc) \to
      \bigdirsum_{\substack{\spinc'' \in \SpinC (Y'', \bdy Y'' \setminus 
          F(\arcdiag_1 \union \arcdiag_2))\\ \spinc''\vert_{Y'} = \spinc', \;\; 
          \spinc''\vert_{Y} = \spinc}} \BSDAh (\HD' \union \HD, \spinc'')
    \]
    respects the identification above, in the sense that, if $\Phi (\x) \in 
    \BSDAh (\HD' \union \HD, \spinc'')$, then $\grbox_{r_1, r_2, r} (\x)$ is 
    valued in $\pi^{-1} (\spinc'')$, and the grading in this fiber is 
    equivalent to the grading $\grr_{r_1, r_2} (\Phi (\x))$ valued in 
    $\Grr_{r_1, r_2} (\HD' \union \HD, \spinc'')$.
\end{itemize}

\subsection{Grading on the algebra}
\label{ssec:gradings_alg}

We now turn to computing the gradings on the algebra $\AZ = (\orarcs, 
\matchedpts, M)$. In fact, recall that in \fullref{ssec:4puncS2}, we defined 
the index sets $J'$ and $J$.  Since we work with a type~$D$ module, it is 
sufficient for our purposes to compute the gradings of $(j) = a_5 
(\set{\rho_j}) \in \AZfive$ for $j \in J'$. (We may then compute the gradings 
of $(j_1, \dotsc, j_\ell)$ by group multiplication.)

\subsubsection{The unrefined grading on the algebra}
\label{sssec:grZ}

The unrefined $\Gr (\arcdiag)$-grading on $\AZ$ is defined as follows.  For 
$a_i \in \matchedpts$ and $[\rhos] \in H_1 (\orarcs, \matchedpts)$, let $m 
(a_i, [\rhos])$ be the average multiplicity with which $[\rhos]$ covers the 
regions on either side of $a_i$, and extend this linearly to a map $m \colon 
H_0 (\matchedpts) \cross H_1 (\orarcs, \matchedpts) \to (1/2) \Z$.  Define now 
$L \colon H_1 (\orarcs, \matchedpts) \cross H_1 (\orarcs, \matchedpts) \to 
(1/2) \Z$ by
\[
  L ([\rhos], [\rhos']) = m (\bdy [\rhos], [\rhos']),
\]
where $\bdy$ is the (signed) connecting homomorphism in the long exact sequence 
for a pair. For $[\rhos] \in H_1 (\orarcs, \matchedpts)$, we also define 
$\epsilon ([\rhos]) \in ((1/2) \Z) / \Z$ by
\[
  \epsilon ([\rhos]) = \paren{\frac{1}{4} \# \setc{a_i \in \matchedpts}{m (a_i, 
      [\rhos]) \equiv \frac{1}{2} \mod 1}} \mod 1.
\]
We now let
\[
  \Gr (\arcdiag) = \setc{(n, [\rhos]) \in \frac{1}{2} \Z \cross H_1 (\orarcs, 
    \matchedpts)}{n \equiv \epsilon ([\rhos]) \mod 1}.
\]
For $(n, [\rhos]) \in \Gr (\arcdiag)$, $n$ is called the \emph{Maslov 
  component}, and $[\rhos]$ the \emph{homological component}.

\begin{remark}
  \label{rmk:half_grading}
  The definitions of $\Gr (\arcdiag)$ here, and consequently of $\Grr 
  (\arcdiag)$ below, conform to the more recent definitions in 
  \cite{LipOzsThu18:BFH}. In \cite{Zar11:BSFH}, $\Gr (\arcdiag)$ is simply 
  defined to be $(1/2) \Z \cross H_1 (\orarcs, \matchedpts)$, which is twice as 
  large as presented above and in \cite{LipOzsThu18:BFH}. By 
  \cite[Proposition~3.39]{LipOzsThu18:BFH}, $\gr (a)$ defined below is 
  guaranteed to be in the smaller $\Gr (\arcdiag)$ as defined here.
\end{remark}

To define the grading of an element $a = \Istart \cdot a_p (\rhos) \cdot \Iend 
\in \A (\arcdiag, p)$, begin by choosing a value of $m_i \in M^{-1} (i) 
\subset \matchedpts$ for each unoccupied arc $e_i$ that is in both $\Istart$ 
and $\Iend$.  (Informally, this is equivalent to making one of each matched 
pair of horizontal dotted lines solid in the strands diagram associated to 
$a$.) Let $\phi$ be the corresponding strands diagram, and let $\inv (\phi)$ be 
the minimum number of crossings in $\phi$.  Further, let
\[
  S = \setc{m_i}{e_i \text{ is in both } \Istart \text{ and } \Iend} \union 
  \rhos^- \subset \matchedpts
\]
be the set of all starting points of $\phi$ (consisting of exactly $p$ 
elements), and let $[S]$ denote the corresponding element of $H_0 
(\matchedpts)$. The grading of $a$ is then defined to be
\[
  \gr (a) = \paren{\inv (\phi) - m ([S], [\rhos]), [\rhos]} \in \Gr (\arcdiag).
\]
According to \cite[Proposition~2.4.5]{Zar11:BSFH}, $\gr (a)$ is independent of 
the choice of values of $m_i$, and $\gr$ gives a well-defined grading on $\AZ$, 
with central element $\lambda = (1, 0)$. Furthermore, $\gr (a)$ turns out to be 
independent of $p$, $\Istart$, and $\Iend$.

Focusing on our context, consider now some $(j) = a_5 (\set{\rho_j}) \in 
\AZfive$ with $j \in J'$. By the last sentence of the previous paragraph, we 
may easily compute
\[
  \gr ((j)) = \gr (a_5 (\set{\rho_j})) = \gr (a_1 (\set{\rho_j})),
\]
where the strands diagram $\phi$ for $a_1 (\set{\rho_j})$ has no horizontal 
dotted lines at all. This means that $\inv (\phi) = 0$ and $S = \set{\rho^-}$, 
allowing us to compute
\[
  \gr ((j)) = \gr (a_1 (\set{\rho_j})) = \paren{-\frac{1}{2}, [\rho_j]}.
\]
For economy and clarity, from now on we will write $\gr (j)$ instead of $\gr 
((j))$.

\subsubsection{The refined grading on the algebra}
\label{sssec:grrZ}

For $\AZ = (\orarcs, \matchedpts, M)$, the matching $M$ induces a map $M_* 
\colon H_0 (\matchedpts) \to H_0 (\set{1, \dotsc, 6})$. Again denoting by 
$\bdy$ the connecting homomorphism, let $\bdy' = M_* \comp \bdy$. The refined 
grading group is defined to be the subgroup
\[
  \Grr (\arcdiag) = \setc{(n, [\rhos]) \in \Gr (\arcdiag)}{\bdy' [\rhos] = 0} 
  \subset \Gr (\arcdiag),
\]
consisting of elements with homological component in $\ker \bdy' \isom H_1 (F 
(\arcdiag))$. In fact, this allows us to view $\Grr (\arcdiag)$ as a central 
extension of $H_1 (F (\arcdiag))$.

In our context, $\ker \bdy'$ is generated by $[\rho_{123}]$, $[\rho_{456}]$, 
and $[\rho_{78}]$. Moreover, it is easy to see that $L$ is identically zero on 
$\ker \bdy' \cross \ker \bdy'$, and so $\Grr (\arcdiag)$ is Abelian and in fact 
isomorphic to $\Z \dirsum \Z^3$, with generators
\[
  \lambda = \paren{1, 0}, \quad A_1 = \paren{-\frac{1}{2}, [\rho_{123}]}, \quad 
  A_2 = \paren{-\frac{1}{2}, [\rho_{456}]}, \quad A_3 = \paren{-\frac{3}{2}, 
    [\rho_{78}]}.
\]
The generators here are chosen to facilitate our discussion below. (Also, 
observe that, for example, $(0, [\rho_{123}]) \notin \Gr (\arcdiag)$; see 
\fullref{rmk:half_grading}.)

In \cite{Zar11:BSFH}, in order to define the refinement data necessary to 
define $\grr_r$, the notion of \emph{connected components} of idempotents is 
introduced:  Two idempotents $I$ and $I'$ are connected if $I - I'$ is in the 
image of $\bdy'$. Now let $\pihom \colon \Gr (\arcdiag) \to H_1 (\orarcs, 
\matchedpts)$ be the projection homomorphism; the refinement data then consists 
of a \emph{base idempotent} $I_0$ in each connected component, and an element 
$r (I) \in (\bdy' \comp \pihom)^{-1} (I - I_0) \subset \Gr (\arcdiag)$ for each 
$I$ in the same component as $I_0$. Then $\grr_r \colon \AZ \to \Grr 
(\arcdiag)$ is defined by
\[
  \grr_r (a) = r (\Istart) \gr (a) r (\Iend)^{-1},
\]
for a generator $a = \Istart \cdot a \cdot \Iend \in \AZ$.

For our $\arcdiag$, it is obvious that there is exactly one connected component 
of idempotents. Thus, we may arbitrarily choose any idempotent in $\IZfive$ to 
be $I_0$. We choose $I_0 = \Iocc{6}$ for convenience of computation, and choose
\begin{align*}
  r (\Iocc{1})& = \paren{- \frac{1}{2}, [\rho_2] + [\rho_5] + [\rho_8]},\\
  r (\Iocc{2})& = \paren{- \frac{1}{2}, [\rho_1] + [\rho_2] + [\rho_5] + 
    [\rho_8]},\\
  r (\Iocc{3})& = (0, [\rho_5] + [\rho_8]),\\
  r (\Iocc{4})& = \paren{0, [\rho_4] + [\rho_5] + [\rho_8]},\\
  r (\Iocc{5})& = \paren{- \frac{1}{2}, [\rho_8]},\\
  r (\Iocc{6})& = (0, 0).
\end{align*}
(Again, see \fullref{rmk:half_grading}.) Then, as an example, we may compute 
$\grr_r (7, 8)$ as follows.  Since $(7, 8) = \Iocc{5} \cdot (78) \cdot 
\Iocc{5}$, we have
\[
  \grr_r (7, 8) = r (\Iocc{5}) \gr (78) r (\Iocc{5})^{-1} = 
  \paren{-\frac{1}{2}, [\rho_8]} \paren{-\frac{1}{2}, [\rho_{78}]} 
  \paren{\frac{1}{2}, -[\rho_8]} = \paren{-\frac{3}{2}, [\rho_{78}]}.
\]
Similarly, we have $\grr_r (1, 23) = \grr_r (12, 3) = (-1/2, [\rho_{123}])$ and 
$\grr_r (4, 56) = \grr_r (45, 6) = (-1/2, [\rho_{456}])$.

\subsubsection{A skein reduction}
\label{sssec:grrZsk}

We now make a further reduction in the grading set, which is necessary for 
stating a graded version of \fullref{thm:main}: We let
\begin{gather*}
  \grrPsk = \subgrp{A_1, A_2, A_3},\\
  \Grrsk = \Grr (\arcdiag) / \grrPsk \isom \Z.
\end{gather*}
We will denote an element in $\Grrsk$ by an integer, given by sending the class 
$\lambda \cdot \grrPsk$ to $1$. (The reason we chose the symbol $\grrPsk$ will 
become clear in the next subsection.)

Composing $\grr_r$ with the canonical projection, we thus obtain a map 
$\grrsk_r \colon \AZ \to \Grrsk$, which we call the \emph{skein-reduced 
  grading}.  We may now compute all colored algebra elements appearing in 
\fullref{fig:comm_diag_f}:
\begin{align*}
  \grrr (I)& = (0, 0), & \grrrsk (I)& = 0,\\
  \grrr (4, 6)& = (-1/2, [\rho_{456}]), & \grrrsk (4, 6)& = 0,\\
  \grrr (4, 6, 7, 8)& = (-2, [\rho_{456}] + [\rho_{78}]),& \grrrsk (4, 6, 7, 8) 
  & = 0,\\
  \grrr (4, 56)& = (-1/2, [\rho_{456}]), & \grrrsk (4, 56)& = 0,\\
  \grrr (12, 3)& = (-1/2, [\rho_{123}]), & \grrrsk (12, 3)& = 0,\\
  \grrr (2)& = (0, 0), & \grrrsk (2)& = 0,\\
  \grrr (45, 6)& = (-1/2, [\rho_{456}]), & \grrrsk (45, 6)& = 0,\\
  \grrr (7, 8)& = (-3/2, [\rho_{78}]), & \grrrsk (7, 8)& = 0,\\
  \grrr (12, 3, 4, 6)& = (-1, [\rho_{123}] + [\rho_{456}]), & \grrrsk (12, 3, 
  4, 6)& = 0,\\
  \grrr (4, 56, 2)& = (-1/2, [\rho_{456}]), & \grrrsk (4, 56, 2)& = 0,\\
  \grrr (12, 3, 4, 56, 2)& = (-1, [\rho_{123}] + [\rho_{456}]), & \grrrsk (12, 
  3, 4, 56, 2)& = 0,\\
  \grrr (1, 23)& = (-1/2, [\rho_{123}]), & \grrrsk (1, 23)& = 0,\\
  \grrr (45, 6, 7, 8, 5)& = (-3, [\rho_{456}] + [\rho_{78}]), & \grrrsk (45, 6, 
  7, 8, 5)& = -1,\\
  \grrr (7, 8, 5)& = (-5/2, [\rho_{78}]), & \grrrsk (7, 8, 5)& = -1,\\
  \grrr (5)& = (-1, 0), & \grrrsk (5)& = -1,\\
  \grrr (5, 12, 3)& = (-3/2, [\rho_{123}]), & \grrrsk (5, 12, 3)& = -1,\\
  \grrr (5, 12, 3, 4, 56)& = (-2, [\rho_{123}] + [\rho_{456}]), & \grrrsk (5, 
  12, 3, 4, 56)& = -1,\\
  \grrr (1, 3)& = (-3/2, [\rho_{123}]), & \grrrsk (1, 3)& = -1,\\
  \grrr (1, 3, 4, 56)& = (-2, [\rho_{123}] + [\rho_{456}]), & \grrrsk (1, 3, 4, 
  56)& = -1.
\end{align*}

\subsection{Gradings on the modules}
\label{ssec:gradings_mod}

We now compute the gradings on $\BSDh (\Bk) \homeq \BSDh (\HD_k)$.

\subsubsection{Unrefined and refined gradings on the modules}
\label{sssec:GrH}

The gradings on the modules are defined in \cite{Zar11:BSFH} as follows.  Let 
$\w_1, \w_2 \in \genset (\HD, \spinc)$. First, for each homology class $B \in 
\pi_2 (\w_2, \w_1)$ with domain $[B]$, its unrefined and refined gradings are 
defined by
\begin{gather*}
  \gr (B) = (- e ([B]) - n_{\w_1} ([B]) - n_{\w_2} ([B]), \bdybdy [B]) \in \Gr 
  (\arcdiag),\\
  \grr_r (B) = r (I_D (\w_2)) \gr (B) r (I_D (\w_1))^{-1} \in \Grr (\arcdiag),
\end{gather*}
where $e ([B])$ denotes the Euler measure of $[B]$, $n_{\w} ([B])$ the sum of 
the average local multiplicities of $[B]$ in the four regions adjacent to the 
intersection points of $\w$, and $I_D (\w)$ the type~$D$ idempotent of $\w$.  
(For the reader that is comparing the above with 
\cite[Definition~6.1.1]{Zar11:BSFH}, the order in which $r (I_D (\w_1))$ and $r 
(I_D (\w_2))$ appear in the second equation arises from the fact that we are 
working with $\Grr (\arcdiag)$ instead of $\Grr (- \arcdiag)$; see 
\cite[Section~10.4 and Section~10.5]{LipOzsThu18:BFH}.)

Then, for each $\spinc \in \SpinC (Y, \bdy Y \setminus F (- \arcdiag))$, one 
picks a generator $\w_0 \in \genset (\HD, \spinc)$, called a \emph{base 
  generator}, and defines the \emph{stabilizer subgroups}
\begin{gather*}
  \grP (\w_0) = \setc{\gr (B)}{B \in \pi_2 (\w_0, \w_0)} \subset \Gr 
  (\arcdiag),\\
  \grPr_r (\w_0) = \setc{\grr_r (B)}{B \in \pi_2 (\w_0, \w_0)} \subset \Grr 
  (\arcdiag).
\end{gather*}
As mentioned in \fullref{ssec:gradings_bsfh}, the grading sets $\Gr (\HD, 
\spinc)$ and $\Grr_r (\HD, \spinc)$ are respectively defined as the set of left 
cosets $\Gr (\arcdiag) / \grP (\w_0)$ and $\Grr (\arcdiag) / \grPr_r (\w_0)$.  
For a generator $\w \in \genset (\HD, \spinc)$, then, one picks a homology 
class $B_0 \in \pi_2 (\w, \w_0)$, and the gradings of $\w$ are defined by
\begin{gather*}
  \gr (\w) = \gr (B_0) \cdot \grP (\w_0) \in \Gr (\HD, \spinc),\\
  \grr_r (\w) = \grr_r (B_0) \cdot \grPr_r (\w_0) \in \Grr_r (\HD, \spinc).
\end{gather*}
The map $\grr_r \colon \genset (\HD, \spinc) \to \Grr_r (\HD, \spinc)$ gives a 
relative grading by a $\Grr (\arcdiag)$-set, which is independent of the choice 
of $\w_0$ and $B_0$ (but not $r$ in general).

In our context, first note that, for each $k \in \set{\infty, 0, 1}$, we have 
$H_1 (B_k, F (- \arcdiag)) = 0$, and so there is a unique $\SpinC$-structure 
$\spinc_k \in \SpinC (B_k, \bdy B_k \setminus F (- \arcdiag))$. This means that 
$\BSDh (\HD_k) = \BSDh (\HD_k, \spinc_k)$; in other words, $\y_1, \y_2, \y_3 
\in \genset (\HD_\infty, \spinc_{\infty})$ share the same grading set, and so 
do $\z_1, \z_2 \in \genset (\HD_0, \spinc_0)$. In the following, we focus on 
the refined gradings.

Arbitrarily, we pick $\x \in \genset (\HD_1)$, $\y_3 \in \genset (\HD_\infty)$, 
and $\z_1 \in \genset (\HD_0)$ to define our grading sets. Since there is only 
one region $R$ in $\HD_1$ not adjacent to $\bdy \Sigma \setminus \orarcs$, we 
see that $\pi_2 (\x, \x)$ is generated by the homology class $B$ with domain 
$[B] = R$. Seeing $I_D (\x) = \Iocc{3}$, we may thus compute $e ([B]) = -2$ and 
$n_{\x} ([B]) = 1/2$ to obtain
\[
  \grr_r (B) = (0, [\rho_5] + [\rho_8]) (1, -[\rho_{456}] - [\rho_{78}]) (0, 
  -[\rho_5] -[\rho_8]) = (2, -[\rho_{456}] - [\rho_{78}]) = - A_2 - A_3.
\]
This means that $\grPr_r (\x) = \subgrp{- A_2 - A_3} \subset \Grr (\arcdiag)$, 
and so $\Grr_r (\HD_1, \spinc_1) = \subgrp{\lambda, A_1, A_2, A_3} / \subgrp{- 
  A_2 - A_3}$.  Similarly, we obtain that $\grPr_r (\y_3) = \subgrp{- A_1 - 
  A_3}$, and $\grPr_r (\z_1) = \subgrp{- A_1 - A_2}$.

Clearly, $\grr_r (\x) = \grPr_r (\x) \in \Grr_r (\HD_1, \spinc_1)$, and 
similarly for $\y_3$ and $\z_1$.  To compute $\grr_r (\y_2)$, for example, one 
might choose $B_0 \in \pi_2 (\y_2, \y_3)$ to be the homology class that 
corresponds to the term $(1, 3) \tensor \y_2$ in $\delta (\y_3)$, to see that
\[
  \grr_r (\y_2) = \grr_r (B_0) \cdot \grPr_r (\y_3) = \paren{\frac{1}{2}, 
    -[\rho_{123}]} \cdot \grPr_r (\y_3) = - A_1 \cdot \subgrp{- A_1 - A_3}.
\]
Alternatively, one could \emph{use} the fact that $(1, 3) \tensor \y_2$ appears 
in $\delta (\y_3)$ and that the grading shift of $\delta$ is $- \lambda$ to 
arrive at the same conclusion.

\subsubsection{Skein-reduced gradings}
\label{sssec:GrHsk}

Our goal is to show that the maps $f_k$, $\vphi_k$, and $\kappa_k$ respect 
gradings in a suitable sense. Towards this goal, we now recall the definitions 
of $\grrPsk$ and $\Grrsk$ in \fullref{sssec:grrZsk}:
\begin{gather*}
  \grrPsk = \subgrp{A_1, A_2, A_3},\\
  \Grrsk = \Grr (\arcdiag) / \grrPsk \isom \Z.
\end{gather*}
Since $\grPr_r (\x)$, $\grPr_r (\y_3)$, and $\grPr_r (\z_1)$ are all subgroups 
of $\grrPsk$, we obtain maps $\grrrsk \colon \genset (\HD_k, \spinc_k) \to 
\Grrsk$ for each $k \in \set{\infty, 0, 1}$, which are relative gradings in the 
$\Grr (\arcdiag)$-set $\Grrsk$. In other words, we have united the three 
gradings on $\HD_k$. We also call these gradings the \emph{skein-reduced 
  gradings}.

We record the gradings of all generators below. Again, we denote elements of 
$\Grrsk$ by the integer obtained by sending $\lambda \cdot \grrPsk$ to $1$.
\begin{align*}
  \grr_r (\x)& = \subgrp{- A_2 - A_3}, & \grrrsk (\x) = 0,\\
  \grr_r (\y_1)& = A_3 \cdot \subgrp{- A_1 - A_3}, & \grrrsk (\y_1) = 0,\\
  \grr_r (\y_2)& = - A_1 \cdot \subgrp{- A_1 - A_3}, & \grrrsk (\y_2) = 0,\\
  \grr_r (\y_3)& = \subgrp{- A_1 - A_3}, & \grrrsk (\y_3) = 0,\\
  \grr_r (\z_1)& = \subgrp{- A_1 - A_2}, & \grrrsk (\z_1) = 0,\\
  \grr_r (\z_2)& = \subgrp{- A_1 - A_2}, & \grrrsk (\z_2) = 0.
\end{align*}

Finally, we mention that, for $a \in \AZ$ and $\w \in \genset (\HD_k)$, we have
\[
  \grrrsk (a \tensor \w) = \grr_r (a) \cdot \grrrsk (\w) = \grrrsk (a) + 
  \grrrsk (\w),
\]
since the action by $\grr_r (a) \in \Grr (\arcdiag)$ on $\Grrsk$ is exactly 
given by its value $\grrrsk (a)$ under the canonical projection.

\subsection{Graded versions of results and their proofs}
\label{ssec:gradings_proof}

We are now ready to state and prove a graded version of 
\fullref{prop:elementary}.

\begin{proposition}
  \label{prop:elementary_graded}
  With our choice of refinement data $r$, the morphisms $f_k \colon \BSDh (\Bk) 
  \to \BSDh (\Bnext)$ in \fullref{prop:elementary} preserve the relative 
  gradings $\grrrsk$ in the $\Grr (\arcdiag)$-set $\Grrsk \isom \Z$, with 
  grading shifts given by the following diagram:
  \[
    \xymatrix{
      \BSDh (\B_0) \ar[rr]^{-1} & & \BSDh (\B_1) \ar[ld]^{0}\\
      & \BSDh (\B_\infty) \ar[lu]^{0}
    }
  \]
  (The reader is reminded that there is a $\Grr (\arcdiag)$-action on the 
  grading set $\Grrsk$ for each of the three modules $\BSDh (\Bk)$, shifting by 
  any integer; accordingly, the diagram above should be interpreted up to 
  simultaneous and opposite integer shifts on any two consecutive arrows.) The 
  homotopy equivalence
  \[
    \BSDh (\Bk) \homeq \Cone (f_{k+1} \colon \BSDh (\Bnext) \to \BSDh (\Bprev))
  \]
  also respects the relative gradings $\grrrsk$.
\end{proposition}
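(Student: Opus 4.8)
The plan is to piggyback on the explicit descriptions already obtained. By \fullref{prop:poly_comp} we may identify $f_k$ with $\fcombk = \fcomb_{k,0} + \fcomb_{k,1}$ and $\vphi_k$ with $\vphicombk = \vphicomb_{k,00} + \vphicomb_{k,01} + \vphicomb_{k,10} + \vphicomb_{k,11}$, whose coefficients are written out term by term in \fullref{sec:skein_via_computation} (see also \fullref{fig:comm_diag_f} and \fullref{fig:comm_diag_phi}), while $\vkappa_k \equiv 0$ by \fullref{lem:cond3_comp}. First I would record two facts: with the base generators chosen in \fullref{sssec:GrHsk}, every generator of every $\BSDh (\Bk)$ has $\grrrsk = 0$; and $\grrrsk (a \tensor \w) = \grrrsk (a) + \grrrsk (\w)$ for $a \in \AZ$. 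Hence, to see that $f_k$ is homogeneous of some degree $s_k \in \Grrsk \isom \Z$, it suffices to observe that every algebra coefficient occurring in $\fcomb_{k,0}$ and $\fcomb_{k,1}$ has one and the same $\grrrsk$-value; reading these off from the list in \fullref{sssec:grrZsk} gives $s_1 = s_\infty = 0$ and $s_0 = -1$, which is exactly the diagram in the statement. The identical inspection of the coefficients of $\vphicomb_{k,ij}$ shows each $\vphi_k$ is homogeneous, of degree $0$ for $k = 0$ and $k = \infty$, and vacuously so for $k = 1$ since $\vphi_1 = 0$.

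Next I would verify that these degrees are forced by, and consistent with, the relations in \fullref{lem:hom_alg}. Since $\delta$ lowers $\grrrsk$ by $1$ (the central element $\lambda$ maps to $1 \in \Z$), Condition~(2) gives $\deg \vphi_k = 1 + s_k + s_{k+1}$ and Condition~(3), together with $\vkappa_k \equiv 0$, gives $s_k + s_{k+1} + s_{k+2} = -1$; both hold, as $s_\infty + s_0 + s_1 = -1$ and $1 + s_k + s_{k+1}$ evaluates to $0, 0, 1$ for $k = \infty, 0, 1$. With this bookkeeping done, I would invoke a graded refinement of \fullref{lem:hom_alg}: a direct inspection of the proof of \cite[Lemma~2.1]{PetWon18:TFHSkein}, which constructs the homotopy equivalence $\BSDh (\Bk) \homeq \Cone (f_{k+1})$ and its inverse out of the $f_k$, $\vphi_k$, $\vkappa_k$ and identity morphisms, shows that if $\Cone (f_{k+1}) = \BSDh (\Bnext) \dirsum \BSDh (\Bprev)$ is graded by combining the $\Grrsk$-grading on the $\BSDh (\Bnext)$-summand with an integer shift of the $\Grrsk$-grading on the $\BSDh (\Bprev)$-summand---chosen so that the off-diagonal cone-differential component $f_{k+1}$ lowers grading by $1$, as the two internal differentials do---then every structure map of the equivalence is homogeneous. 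This is the sense in which \fullref{lem:hom_alg} respects gradings; the grading on the cone is pinned down only up to the residual $\Grrsk$-action, which is the origin of the ``simultaneous and opposite integer shifts'' caveat in the statement.

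The point I would present most carefully, and the only genuine obstacle, is that \emph{before} passing to $\Grrsk$ the morphisms $f_k$ and $\vphi_k$ are \textbf{not} homogeneous: the families $\Tri_0$ and $\Tri_1$ (equivalently, the summands $\Etaknext_0$ and $\Etaknext_1$) produce coefficients of different $\grrr$-degrees, and even a single $\fcomb_{k,i}$ contains terms of several $\grrr$-degrees. The reduction $\grrPsk = \subgrp{A_1, A_2, A_3}$ is designed precisely so that, first, the stabilizer subgroups $\grPr_r (\x)$, $\grPr_r (\y_3)$, $\grPr_r (\z_1)$ lie inside $\grrPsk$, so the (single-$\SpinC$) relative gradings on the three modules $\BSDh (\Bk)$ descend to relative gradings all valued in the common $\Grr (\arcdiag)$-set $\Grrsk$; and second, these $\grrr$-discrepancies then collapse, as is witnessed by the uniformity of the $\grrrsk$-columns in \fullref{sssec:grrZsk}. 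Once the conventions of \fullref{sssec:grZ} and \fullref{sssec:GrH} are fixed---in particular the direction of the $\Grr (\arcdiag)$-action and the $\lambda^{-1}$ grading shift of $\delta$---what remains is exactly the finite list of checks above; that $\vkappa_k \equiv 0$ makes Condition~(3) hold on the nose, which slightly eases verifying that the retraction $\BSDh (\Bk) \to \Cone (f_{k+1}) \to \BSDh (\Bk)$ preserves gradings.
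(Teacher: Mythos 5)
Your proposal is correct and follows essentially the same route as the paper's proof: identify $f_k$, $\vphi_k$ with the explicitly computed $\fcombk$, $\vphicombk$ (with $\vkappa_k \equiv 0$), read off the uniform $\grrrsk$-values of the algebra coefficients against the tables in \fullref{sssec:grrZsk} and \fullref{sssec:GrHsk}, and conclude that all morphisms entering \fullref{lem:hom_alg} are graded, so the cone equivalence is too. Your added consistency check via the $\Ainf$-relations and your observation that homogeneity genuinely fails before the skein reduction are correct elaborations of points the paper leaves implicit, not a different argument.
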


\begin{proof}
  Combining \fullref{fig:comm_diag_f}, \fullref{fig:comm_diag_phi}, and the 
  grading computations in \fullref{sssec:grrZsk} and \fullref{sssec:GrHsk}, we 
  see that the maps $\fcombk$ in \fullref{sec:skein_via_computation} preserve 
  the relative gradings $\grrrsk$, and have grading shifts indicated by the 
  diagram.  Now $\vphicombk$ can also be easily checked to be homogeneous, and 
  have the appropriate grading shifts, using the same grading computations.  
  Recalling that $\kappa_k \equiv 0$ in \fullref{lem:cond3_comp}, this means 
  that all morphisms in \fullref{lem:hom_alg} are graded, implying that the 
  homotopy equivalence also respects the gradings.  (Note that 
  \fullref{prop:poly_comp} implies that the gradings are the same for $f_k$ and 
  $\vphi_k$ as defined in \fullref{sec:skein_via_polygon}.)
\end{proof}

\begin{remark}
  \label{rmk:gr_indep_on_r}
  A version of the statements in the proposition above should be true for every 
  choice of refinement data $r$. Here, one would possibly have to change the 
  basis of $\Grr (\arcdiag) \isom \Z \dirsum \Z^3$, i.e.\ to pick different 
  generators $A_1$, $A_2$, and $A_3$, in order to define $\grrPsk$ and 
  $\Grrsk$. In fact, one could first work with the unrefined gradings $\gr$ to 
  obtain an unrefined version of the proposition, and only then, refine the 
  gradings. For economy and clarity, we elected not to present this approach; 
  this saved us, for example, from computations in the quotient of $\Gr 
  (\arcdiag)$ by the normal closure of $\set{A_1, A_2, A_3}$.
\end{remark}

\begin{remark}
  \label{rmk:gr_abs}
  One could also remove the indeterminacy of simultaneous and opposite integer 
  shifts as follows. Instead of considering domain gradings for bigons in the 
  three Heegaard diagrams $\HD_1$, $\HD_\infty$, and $\HD_0$ in 
  \fullref{fig:hd} separately, one may introduce domain gradings for polygons 
  in the bordered--sutured multi-diagram $\HDab$ in \fullref{fig:mult_hd_3}.  
  Again, for simplicity, we opted not to take this approach.
\end{remark}

\begin{remark}
  \label{rmk:gr_01}
  The astute reader might have noticed in \fullref{sssec:GrHsk} that $\grrPsk$ 
  is not the smallest subgroup $\grrPmin$ containing $\grPr_r (\x)$, $\grPr_r 
  (\y_3)$, and $\grPr_r (\z_1)$. Indeed, while $\Grr (\arcdiag) / \grrPsk \isom 
  \Z$, we have $\Grr (\arcdiag) / \grrPmin \isom \Z \dirsum \cycgrp{2}$. In 
  fact, it is easy to check that, for each $k \in \set{\infty, 0, 1}$, the 
  morphisms $\fcomb_{k,0}$ and $\fcomb_{k,1}$ are both homogeneous with respect 
  to the gradings in $\Grr (\arcdiag) / \grrPmin$, but their grading shifts 
  differ by the generator of the $\cycgrp{2}$ factor.  If $\Yk = \Y' 
  \glue{\sutF (\arcdiag)} \Bk$ is the sutured manifold associated to a link $L 
  \subset S^3$, then the $\cycgrp{2}$ factor may be seen as the Alexander 
  grading on $\HFKh (S^3, L)$ modulo~2.  Assuming that \fullref{prop:comm_gen} 
  holds with gradings, one could use this fact, as well as the fact that 
  $\Etaknext_0$ and $\Etaknext_1$ differ in mod-2 Maslov gradings, to obtain 
  the mod-2 grading reduction of the $\delta$-graded skein relation in 
  \cite{ManOzs08:QALinks} (with gradings suitably shifted).
\end{remark}

Next, we turn to a graded version of \fullref{thm:general}. Let $\Y' = (Y', 
\Gamma', \arcdiag_1 \union \arcdiag_2 \union \arcdiag, \phi')$ be the 
bordered--sutured manifold defined in \fullref{ssec:proof_main}, with $\Y' 
\glue{F (\arcdiag)} \Bk = \Yk$, and let $\HD'$ be a bordered--sutured diagram 
for $\Y'$; then
\[
  \BSDAAh (\Y') \boxtensor \BSDh (\Bk) \homeq \BSDAh (\Yk).
\]
For $\spinc' \in \SpinC (Y', \bdy Y' \setminus F (\arcdiag_1 \union \arcdiag_2 
\union \arcdiag))$,
pick a generator $\w_0' \in \genset (\HD', \spinc')$. By the graded pairing 
theorem mentioned in \fullref{ssec:gradings_bsfh}, $\BSDAAh (\HD', \spinc') 
\boxtensor \BSDh (\HD_k)$ has a relative grading $\grbox_{r_1, r_2, r}$ with 
values in
\[
  \Grr_{r_1, r_2, r} (\HD', \HD_k, \spinc', \spinc_k) = \grPr_{r_1, r_2, r} 
  (\w_0') \rcoset \Grr (\arcdiag_1 \union \arcdiag_2 \union \arcdiag) / \grPr_r 
  (\w_{0,k}),
\]
where $\w_{0,k}$ is $\x$, $\y_3$, and $\z_1$, for $k = 1$, $0$, and $\infty$, 
respectively. We may, as before, take a skein reduction to get a relative 
grading $\grboxsk_{r_1, r_2, r}$ with values in the set
\[
  \Grrrrsk (\HD', \HD_k, \spinc', \spinc_k) = \grPr_{r_1, r_2, r} (\w_0') 
  \rcoset \Grr (\arcdiag_1 \union \arcdiag_2 \union \arcdiag) / \grrPsk.
\]

\begin{proposition}
  \label{prop:box_graded}
  For each $\spinc' \in \SpinC (Y', \bdy Y' \setminus F (\arcdiag_1 \union 
  \arcdiag_2 \union \arcdiag))$, and with our choice of refinement data $r$, 
  the morphisms
  \[
    \Id_{\BSDAAh (\Y')} \boxtensor f_k \colon \BSDAAh (\Y') \boxtensor \BSDh 
    (\Bk) \to \BSDAAh (\Y') \boxtensor \BSDh (\Bnext)
  \]
  preserve the relative gradings $\grrrrboxsk$ in the $\Grr (\arcdiag_1 \union 
  \arcdiag_2)$-set $\Grrrrsk (\HD', \HD_k, \spinc', \spinc_k)$, with grading 
  shifts given by the following diagram:
  \[
    \xymatrix{
      \BSDAAh (\HD', \spinc') \boxtensor \BSDh (\HD_0) \ar[rr]^{\lambda^{-1} = 
        (-1, 0)} & & \BSDAAh (\HD', \spinc') \boxtensor \BSDh (\HD_1) 
      \ar[ld]^{(0, 0)}\\
      & \BSDAAh (\HD', \spinc') \boxtensor \BSDh (\HD_\infty) \ar[lu]^{(0, 0)}
    }
  \]
  (Again, the diagram above should be interpreted up to simultaneous and 
  inverse (right) actions by elements of $\Grr (\arcdiag_1 \union \arcdiag_2)$ 
  on any two consecutive arrows.) The homotopy equivalence
  \[
    \BSDAAh (\Y') \boxtensor \BSDh (\Bk) \homeq \Cone (\Id_{\BSDAAh (\Y')} 
    \boxtensor f_{k+1})
  \]
  also respects the relative gradings $\grrrrboxsk$.
\end{proposition}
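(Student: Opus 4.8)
The plan is to obtain \fullref{prop:box_graded} from the elementary graded statement \fullref{prop:elementary_graded} by running the box-tensor argument of \fullref{ssec:proof_main} with the gradings carried along. The key fact is that box-tensoring with the (bounded) trimodule $\BSDAAh(\Y')$ is a grading-preserving operation in the sense encoded by the graded pairing theorem recalled in \fullref{ssec:gradings_bsfh}: it sends a type~$D$ morphism of $\BSDh(\HD_k)$ with a prescribed $\Grr(\arcdiag)$-grading shift to a morphism of the corresponding box-tensor products whose shift is the image of that shift under the inclusion $\Grr(\arcdiag) \into \Grr(\arcdiag_1 \union \arcdiag_2 \union \arcdiag)$, acting on the coset sets $\Grrrrsk(\HD', \HD_k, \spinc', \spinc_k)$. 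Since $\lambda$ is central, ``the same shift'' is unambiguous.

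First I would note that, because $H_1(B_k, F(-\arcdiag)) = 0$ for each $k \in \set{\infty, 0, 1}$, the computations of \fullref{sssec:GrHsk} show that $\grPr_r(\w_{0,k})$ --- that is, $\grPr_r(\x)$, $\grPr_r(\y_3)$, $\grPr_r(\z_1)$ --- is a subgroup of $\grrPsk$ in every case. Consequently, once $\spinc'$ is fixed, the skein-reduced grading set $\Grrrrsk(\HD', \HD_k, \spinc', \spinc_k) = \grPr_{r_1, r_2, r}(\w_0') \rcoset \Grr(\arcdiag_1 \union \arcdiag_2 \union \arcdiag) / \grrPsk$ is literally the same set for all three values of $k$. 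Thus the triangular diagram in the statement lives in a single $\Grr(\arcdiag_1 \union \arcdiag_2)$-set, the indeterminacy of simultaneous opposite shifts is precisely the residual $\Grr(\arcdiag_1 \union \arcdiag_2)$-action, and the diagram is well-posed.

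The core step is to verify the grading-compatibility of box-tensoring explicitly. Unwinding the definition, a generator of $\BSDAAh(\HD', \spinc') \boxtensor \BSDh(\HD_k)$ is a pair $\x' \boxtensor \w$ with $\w \in \genset(\HD_k)$ and idempotents matching along $\arcdiag$, and $\grrrrboxsk(\x' \boxtensor \w)$ is built by gluing the partial grading of $\x'$ to $\grrrsk(\w) \in \Grrsk$ along the $\Grr(\arcdiag)$-part. The morphism $\Id_{\BSDAAh(\Y')} \boxtensor f_k$ sends $\x' \boxtensor \w$ to a finite sum of terms $\y' \boxtensor \w''$, each obtained by iterating the type~$D$ structure map of $\BSDh(\HD_k)$ some $j \geq 0$ times, applying $f_k$ once, and feeding the accumulated $\AZ$-outputs into the $A_\infty$-action of $\BSDAAh(\Y')$ on the $\AZ$-variable. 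Each structure-map step contributes a factor $\lambda^{-1}$, the $(j+2)$-ary $A_\infty$-action contributes $\lambda^{j}$, and every $\AZ$-element produced is absorbed into the $\Grr(\arcdiag)$-part of the gradings of the intermediate generators; these cancel telescopically --- this is exactly the assertion that box-tensoring a graded type~$D$ morphism yields a graded morphism, a bordered--sutured transcription of the graded functoriality of the box tensor from \cite{LipOzsThu18:BFH, Zar11:BSFH} --- leaving only the shift $\lambda^{\epsilon_k}$ contributed by $f_k$. By \fullref{prop:elementary_graded} we have $(\epsilon_0, \epsilon_1, \epsilon_\infty) = (-1, 0, 0)$, which yields the displayed shifts $\lambda^{-1} = (-1, 0)$, $(0, 0)$, $(0, 0)$. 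I expect the bookkeeping here --- keeping straight on which side of $\arcdiag$ the algebra elements act, and passing cleanly between $\Grr(\arcdiag)$ and its image in $\Grr(\arcdiag_1 \union \arcdiag_2 \union \arcdiag)$ --- to be the only genuine obstacle; once it is in place, the numerology is immediate.

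Finally, for the homotopy equivalence: by \fullref{prop:poly_comp} the morphisms $f_k$ and $\vphi_k$ coincide with the combinatorial $\fcombk$ and $\vphicombk$, which are homogeneous with the shifts recorded in \fullref{prop:elementary_graded}, and $\kappa_k \equiv 0$ by \fullref{lem:cond3_comp}. Box-tensoring with $\Id_{\BSDAAh(\Y')}$, the morphisms $\Id \boxtensor f_k$, $\Id \boxtensor \vphi_k$, and $\Id \boxtensor \kappa_k = 0$ still satisfy Conditions~(1)--(3) of \fullref{lem:hom_alg} for $\BSDAAh(\Y') \boxtensor \BSDh(\Bk)$ (box-tensoring is a functor preserving these relations, as in the proof of \fullref{thm:general}), and by the core step they are graded with respect to $\grrrrboxsk$. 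Applying \fullref{lem:hom_alg}, whose homotopy equivalence is assembled functorially from this data, we conclude that the equivalence $\BSDAAh(\Y') \boxtensor \BSDh(\Bk) \homeq \Cone(\Id_{\BSDAAh(\Y')} \boxtensor f_{k+1})$ respects $\grrrrboxsk$.
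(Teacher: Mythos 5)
Your proposal is correct and follows essentially the same route as the paper's proof: both reduce the statement to the graded functoriality of $\Id_{\BSDAAh (\Y')} \boxtensor \blank$ (which the paper treats as ``straightforward to check,'' citing \cite[Example~2.5.35]{LipOzsThu15:BFHBimodules}, and which you verify explicitly via the telescoping $\lambda$-count), combined with the shifts computed in \fullref{prop:elementary_graded}. Your explicit treatment of the mapping-cone statement through \fullref{lem:hom_alg} just spells out what the paper leaves implicit in ``the proposition now follows directly.''
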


\begin{proof}
  Given a $G$-set-graded type~$D$ morphism $f \colon \M_1 \to \M_2$, with 
  grading shifts in $G / \grP$, it is straightforward to check that, for any 
  $G$-set-graded type~$A$ module $\module{N}$ with gradings in $\grP' \rcoset 
  G$, we have that $\Id_{\module{N}} \boxtensor f \colon \module{N} \boxtensor 
  \M_1 \to \module{N} \boxtensor \M_2$ is a $\Z$-set graded chain map with 
  grading shifts given by the obvious reduction to $\grP' \rcoset G / \grP$.  
  (More generally, the functor $(\module{N} \boxtensor \blank, \Id_{\module{N}} 
  \boxtensor \blank)$ can be upgraded to a $\dg$ functor in an appropriate 
  sense; see \cite[Example~2.5.35]{LipOzsThu15:BFHBimodules}.)
  
  The statement above generalizes in an obvious manner to the case where 
  $\module{N}$ is a type~$\DAA$ trimodule. Specializing this generalization to 
  our context would mean the following: Given the $\Grr (\arcdiag_1 \union 
  \arcdiag_2 \union \arcdiag)$-set-graded morphism $f_k \colon \BSDh (\HD_k) 
  \to \BSDh (\HD_{k+1})$, with gradings in $\Grr (\arcdiag) / \grrPsk \subset 
  \Grr (\arcdiag_1 \union \arcdiag_2 \union \arcdiag) / \grrPsk$, the morphism 
  $\Id_{\BSDAAh (\HD', \spinc')} \boxtensor f_k$ is a $\Grr (\arcdiag_1 \union 
  \arcdiag_2)$-set-graded morphism, with gradings given by the reduction to 
  $\grPr_{r_1, r_2, r} (\w_0') \rcoset \Grr (\arcdiag_1 \union \arcdiag_2 
  \union \arcdiag) / \grrPsk$.

  Recalling that the grading shift $- 1$ is a shorthand for $\lambda^{-1} \in 
  \Grr (\arcdiag) / \grrPsk \subset \Grr (\arcdiag_1 \union \arcdiag_2 \union 
  \arcdiag) / \grrPsk$, the proposition now follows directly from 
  \fullref{prop:elementary_graded}.
\end{proof}

We now work towards interpreting the result above in terms of the 
$\SpinC$-structures and intrinsic gradings on $\Yk = \Y' \glue{\sutF 
  (\arcdiag)} \Bk$. To do so, we must first understand the grading set 
$\grPr_{r_1, r_2, r} (\w_0') \rcoset \Grr (\arcdiag_1 \union \arcdiag_2 \union 
\arcdiag) / \grrPsk$, or in fact
\[
  \Grrrr (\HD', \HD_k, \spinc', \spinc_k) = \grPr_{r_1, r_2, r} (\w_0') \rcoset 
  \Grr (\arcdiag_1 \union \arcdiag_2 \union \arcdiag) / \grPr_r (\w_{0, k}),
\]
in terms of the discussion in \fullref{ssec:gradings_bsfh}.

First, assume for the moment that there exist generators $\wocc{1}, \wocc{3}, 
\wocc{5} \in \genset (\HD', \spinc')$ in idempotents $\Iocc{1}$, $\Iocc{3}$, 
and $\Iocc{5}$ respectively; then
\begin{itemize}
  \item $\wocc{1}$ is complementary to $\y_3 = \w_{0, \infty}$ and $\z_2$;
  \item $\wocc{3}$ is complementary to $\x = \w_{0, 1}$ and $\y_2$; and
  \item $\wocc{5}$ is complementary to $\y_1$ and $\z_1 = \w_{0, 0}$.
\end{itemize}
Let $\w_{0, k}'$ be $\wocc{3}$, $\wocc{1}$, and $\wocc{5}$, for $k = 1$, $0$, 
and $\infty$, respectively, so that $\w_{0, k}'$ is complementary to $\w_{0, 
  k}$. Let $B_{0, k}' \in \pi_2 (\w_0', \w_{0, k}')$ be any homology class from 
$\w_{0, k}'$ to $\w_0'$.  Adapting  \cite[Corollary~10.16]{LipOzsThu18:BFH}, 
one could choose to change the base generator in $\HD'$ from $\w_0'$ to $\w_{0, 
  k}'$, and identify the grading in $\Grr_{r_1, r_2, r} (\HD', \spinc') = 
\grPr_{r_1, r_2, r} (\w_0') \rcoset \Grr (\arcdiag_1 \union \arcdiag_2 \union 
\arcdiag)$ with the grading in $\grPr_{r_1, r_2, r} (\w_{0, k}') \rcoset \Grr 
(\arcdiag_1 \union \arcdiag_2 \union \arcdiag)$, such that the grading of a 
generator $\w' \in \genset (\HD', \spinc')$ with respect to $\w_{0, k}'$ is
\[
  \grr_{r_1, r_2, r} (B_{0, k}') \cdot \grr_{r_1, r_2, r} (\w').
\]
This correspondence carries directly to the double-coset space, allowing one to 
view gradings in $\Grr_{r_1, r_2, r} (\HD', \HD_k, \spinc', \spinc_k)$ as 
gradings in $\grPr_{r_1, r_2, r} (\w_{0, k}') \rcoset \Grr (\arcdiag_1 \union 
\arcdiag_2 \union \arcdiag) / \grPr_r (\w_{0, k})$. Similarly, one may view 
gradings in $\Grrrrsk (\HD', \HD_k, \spinc', \spinc_k)$ as gradings in 
$\grPr_{r_1, r_2, r} (\w_{0, k}') \rcoset \Grr (\arcdiag_1 \union \arcdiag_2 
\union \arcdiag) / \grrPsk$.

Denote by $\pi_k$ the projection
\[
  \pi \colon \grPr_{r_1, r_2, r} (\w_{0, k}') \rcoset \Grr (\arcdiag_1 \union 
  \arcdiag_2 \union \arcdiag) / \grPr_r (\w_{0, k}) \to \SpinC (Y_k, \bdy Y_k 
  \setminus F (\arcdiag_1 \union \arcdiag_2), \spinc', \spinc_k)
\]
as mentioned in \fullref{ssec:gradings_bsfh}. According to 
\cite{LipOzsThu18:BFH}, this projection to the set of $\SpinC$-structures on 
$(Y_k, \bdy Y_k \setminus F (\arcdiag_1 \union \arcdiag_2)$ that restrict to 
$\spinc'$ and $\spinc_k$, is defined as follows.  The quotient of the 
double-coset space by the action by $\Z$ on the Maslov component is naturally 
identified with the image of $H_1 (F (\arcdiag))$ in $H_1 (Y_k, F (\arcdiag_1 
\union \arcdiag_2))$; let $[ \blank ]$ denote this quotient map.  More 
concretely, for $g \in \Grr (\arcdiag_1 \union \arcdiag_2 \union \arcdiag)$, 
let $[\grPr_{r_1, r_2, r} (\w_{0, k}') \cdot g \cdot \grPr_r (\w_{0, k})]$ 
denote the image of the $H_1 (F (\arcdiag))$-component of $g$ in $H_1 (Y_k, F 
(\arcdiag_1 \union \arcdiag_2))$. Then $\pi_k$ is defined by
\[
  \pi_k (\grPr_{r_1, r_2, r} (\w_{0, k}') \cdot g \cdot \grPr_r (\w_{0, k})) = 
  \spinc (\w_{0, k}' \cross \w_{0, k}) + [\grPr_{r_1, r_2, r} (\w_{0, k}') 
  \cdot g \cdot \grPr_r (\w_{0, k})],
\]
where $\w_{0, k}' \cross \w_{0, k}$ is viewed as a generator in $\genset (\HD' 
\union \HD_k)$. (Recall that $\SpinC (Y_k, \bdy Y_k \setminus F (\arcdiag_1 
\union \arcdiag_2), \spinc', \spinc_k)$ is an affine copy of the image of $H_1 
(F (\arcdiag))$ in $H_1 (Y_k, F (\arcdiag_1 \union \arcdiag_2))$.) Each fiber 
$\pi_k^{-1} (\spinc'')$ is then isomorphic as a $\Grr (\arcdiag_1 \union 
\arcdiag_2)$-set to $\Grr_{r_1, r_2} (\HD' \union \HD_k, \spinc'')$.

Since \fullref{prop:box_graded} pertains to the skein-reduced gradings in 
$\Grrrrsk (\HD', \HD_k, \spinc', \spinc_k)$, which we identify with 
$\grPr_{r_1, r_2, r} (\w_{0, k}') \rcoset \Grr (\arcdiag_1 \union \arcdiag_2 
\union \arcdiag) / \grrPsk$, we must find an analogous correspondence for these 
grading sets too.  To do so, recall that $\grrPsk$ is generated by $A_1$, 
$A_2$, and $A_3$; the homological components of these elements are 
$[\rho_{123}]$, $[\rho_{456}]$, and $[\rho_{78}]$ respectively. Direct 
inspection of \fullref{fig:arc_diag} shows that these elements, viewed in $H_1 
(F (\arcdiag))$, are represented by meridian loops of three of the four 
punctures of $F (\arcdiag)$; we denote the images of these meridian loops in 
$H_1 (Y_k, F (\arcdiag_1 \union \arcdiag_2))$ by $\mu_1$, $\mu_2$, and $\mu_3$ 
respectively.  Letting $\Msk_k \subset H_1 (Y_k, F (\arcdiag_1 \union 
\arcdiag_2))$ be the subgroup generated by $\mu_1$, $\mu_2$, and $\mu_3$, we 
may thus define a projection
\[
  \pisk_k \colon \grPr_{r_1, r_2, r} (\w_{0, k}') \rcoset \Grr (\arcdiag_1 
  \union \arcdiag_2 \union \arcdiag) / \grrPsk \to \SpinC (Y_k, \bdy Y_k 
  \setminus F (\arcdiag_1 \union \arcdiag_2), \spinc', \spinc_k) / \eqrelsk,
\]
where the equivalence relation $\eqrelsk$ is defined by
\[
  \spinc_1'' \sim_{\mathrm{sk}} \spinc_2'' \qquad \text{if } \PD (\spinc_1'' - 
  \spinc_2'') \in \Msk_k.
\]
Here, $\PD$ denotes the isomorphism under Poincar\'e duality. We denote an 
equivalence class of $\SpinC$-structures by $[\spinc'']$, and the set of 
equivalence classes in $\SpinC (Y_k, \bdy Y_k \setminus F (\arcdiag_1 \union 
\arcdiag_2), \spinc', \spinc_k)$ by $E_k$. Recalling that the grading sets 
$\Grrrrsk (\HD', \HD_k, \spinc', \spinc_k)$ are in fact the same for $k \in 
\set{\infty, 0, 1}$, the projections $\pisk_k$ provide a family of bijections 
$e_k \colon E_k \to E_{k+1}$ such that $e_{k+2} \comp e_{k+1} \comp e_k$ is the 
identity. Furthermore, $(\pisk_k)^{-1} ([\spinc''])$ is isomorphic to 
$(\pisk_{k+1})^{-1} (e_k ([\spinc'']))$.

In order to understand the fibers of $\pisk_k$, we first note the following.

\begin{lemma}
  \label{lem:G-set_isom}
  For $k \in \set{\infty, 0, 1}$ and $i \in \set{1, 2, 3}$, the map
  \[
    \grPr_{r_1, r_2, r} (\w_{0, k}') \cdot g \cdot \grPr_r (\w_{0, k}) \mapsto 
    \grPr_{r_1, r_2, r} (\w_{0, k}') \cdot g A_i \cdot \grPr_r (\w_{0, k})
  \]
  defines an isomorphism $\pi_k^{-1} (\spinc'') \isom \pi_k^{-1} (\spinc'' + 
  \PD (\mu_i))$ as $\Grr (\arcdiag_1 \union \arcdiag_2)$-sets. This extends to 
  multiplication by $A_1^{n_1} A_2^{n_2} A_3^{n_3}$, giving isomorphisms 
  $\pi_k^{-1} (\spinc'') \isom \pi_k^{-1} (\spinc'' + \PD (h))$ for some $h \in 
  \Msk_k$.
\end{lemma}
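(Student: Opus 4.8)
The plan is to prove everything at once for right multiplication by $A_i$ on the whole double-coset space, and then read off the statement about fibers. Concretely, I would first observe that $A_i \in \Grr(\arcdiag)$ is \emph{central} in $\Grr(\arcdiag_1 \union \arcdiag_2 \union \arcdiag)$. Within the $\arcdiag$-factor this is immediate, since $\Grr(\arcdiag) \isom \Z \dirsum \Z^3$ is Abelian by \fullref{sssec:grrZ}; and since $\arcdiag$ shares no surface component with $\arcdiag_1$ or $\arcdiag_2$, the bilinear form $L$ defining the central extension vanishes on pairs of homology classes supported on different components (the relevant multiplicity pairings are identically zero there), so $A_i$ also commutes with $\Grr(\arcdiag_1)$ and $\Grr(\arcdiag_2)$, hence with all of $\Grr(\arcdiag_1 \union \arcdiag_2 \union \arcdiag)$ under the inclusion $\Grr(\arcdiag) \subset \Grr(\arcdiag_1 \union \arcdiag_2 \union \arcdiag)$.

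Centrality then yields the three formal properties directly. First, $A_i$ normalizes $\grPr_r(\w_{0,k})$, so the assignment $\grPr_{r_1, r_2, r}(\w_{0,k}') \cdot g \cdot \grPr_r(\w_{0,k}) \mapsto \grPr_{r_1, r_2, r}(\w_{0,k}') \cdot g A_i \cdot \grPr_r(\w_{0,k})$ is independent of the choice of representative $g$, i.e.\ well-defined on the double coset $\grPr_{r_1, r_2, r}(\w_{0,k}') \rcoset \Grr(\arcdiag_1 \union \arcdiag_2 \union \arcdiag) / \grPr_r(\w_{0,k})$. Second, right multiplication by $A_i^{-1}$ is a two-sided inverse, so the map is a bijection of this double-coset space. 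Third, right multiplication by $A_i$ commutes with the left $\Grr(-\arcdiag_1)$- and right $\Grr(\arcdiag_2)$-actions (each of which is given by multiplication by a subgroup element commuting with the central $A_i$), hence the map is $\Grr(\arcdiag_1 \union \arcdiag_2)$-equivariant.

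Next I would track the $\SpinC$-structure. Using the formula for $\pi_k$ recalled just before the lemma, $\pi_k$ of a double coset is $\spinc(\w_{0,k}' \cross \w_{0,k})$ plus the image in $H_1(Y_k, F(\arcdiag_1 \union \arcdiag_2))$ of the $H_1(F(\arcdiag))$-component of the representative. Replacing $g$ by $g A_i$ adds to that component the homological component of $A_i$, namely the meridian $[\rho_{123}]$, $[\rho_{456}]$, or $[\rho_{78}]$ of one of the punctures of $F(\arcdiag)$, whose image in $H_1(Y_k, F(\arcdiag_1 \union \arcdiag_2))$ is by definition $\mu_i$. Therefore $\pi_k$ of the image equals $\pi_k$ of the original double coset plus $\PD(\mu_i)$, so the global bijection restricts to a bijection $\pi_k^{-1}(\spinc'') \to \pi_k^{-1}(\spinc'' + \PD(\mu_i))$; together with the equivariance above this is the asserted $\Grr(\arcdiag_1 \union \arcdiag_2)$-set isomorphism. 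For the last sentence, one composes these isomorphisms over $i$: since $\grrPsk = \subgrp{A_1, A_2, A_3} \isom \Z^3$ is free Abelian, right multiplication by $A_1^{n_1} A_2^{n_2} A_3^{n_3}$ shifts the $\SpinC$-structure by $n_1 \PD(\mu_1) + n_2 \PD(\mu_2) + n_3 \PD(\mu_3) = \PD(h)$ with $h = n_1 \mu_1 + n_2 \mu_2 + n_3 \mu_3 \in \Msk_k$.

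The only point requiring genuine care, rather than formal manipulation of cosets, is the centrality claim: it rests on the structural description of $\Grr$ of a disjoint union of arc diagrams and on the vanishing of $L$ across components, both of which I would cite from \cite{Zar11:BSFH, LipOzsThu18:BFH}. The second mild subtlety is correctly unpacking the definition of $\pi_k$ (and the change of base generator from $\w_0'$ to $\w_{0,k}'$ performed in the preceding paragraphs) so that the homological component of $A_i$ really does land in the $H_1(F(\arcdiag))$-summand that $\pi_k$ reads off; everything after that is bookkeeping.
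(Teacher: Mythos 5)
Your proposal is correct and follows essentially the same route as the paper's proof: the key point in both is that $A_i \in \Grr(\arcdiag)$ is central in $\Grr(\arcdiag_1 \union \arcdiag_2 \union \arcdiag)$ (because $\Grr(\arcdiag)$ is Abelian and the pairing $L$ vanishes across the disjoint pieces), from which well-definedness on double cosets, bijectivity, and equivariance are formal. You additionally spell out the $\SpinC$-shift via the formula for $\pi_k$, which the paper leaves implicit; that bookkeeping is accurate.
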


\begin{proof}
  First, note that $\Grr (\arcdiag)$ is contained in the center of $\Grr 
  (\arcdiag_1 \union \arcdiag_2 \union \arcdiag)$, since $\Grr (\arcdiag)$ is 
  Abelian, and $\arcdiag_1$ and $\arcdiag_2$ are disjoint from $\arcdiag$. The 
  fact that the given map is well defined and bijective, and respects the 
  action of $\Grr (\arcdiag_1 \union \arcdiag_2)$, follows from the observation 
  that $A_i$ is central for each $i$.
\end{proof}

\begin{remark}
  \label{rmk:G-set_isom_self}
  It is possible that $\mu_i = 0$ (or, more generally, $h = 0$), in which case 
  \fullref{lem:G-set_isom} provides an isomorphism of $\pi_k^{-1} (\spinc'')$ 
  with itself, which may not be the identity isomorphism.
\end{remark}

\begin{remark}
  \label{rmk:G-set_isom_closed}
  When $\arcdiag_1 = \arcdiag_2 = \eset$, i.e.\ when $\Yk$ is a sutured 
  manifold for each $k$, the isomorphism in \fullref{lem:G-set_isom} takes on a 
  particularly simple form.  By \cite[Proposition~6.3.2]{Zar11:BSFH}, $\Grr 
  (\HD' \union \HD_k, \spinc'')$, and hence the fiber $\pi_k^{-1} (\spinc'')$, 
  is (non-canonically) isomorphic to $\Z / \div (\spinc'') \Z$. (Recall that 
  the divisibility $\div (\spinc'')$  of $\spinc''$ is the integer 
  $\gcd_{\alpha \in H_2 (Y_k)} \eval{c_1 (\spinc'')}{\alpha}$, where $c_1 
  \colon \SpinC (Y_k, \bdy Y_k \setminus F (\arcdiag_1 \union \arcdiag_2))$ 
  sends $\spinc''$ to the Euler class of the orthogonal bundle of a vector 
  field representative of $\spinc''$.) Thus, the lemma states that the fibers 
  $\pi_k^{-1} (\spinc'')$ and $\pi_k^{-1} (\spinc'' + \PD (\mu_i))$ are 
  isomorphic as $\Z$-sets, or, equivalently, that $\div (\spinc'') = \div 
  (\spinc'' + \PD (\mu_i))$. Indeed, under the map $H_1 (Y_k, F (\arcdiag_1 
  \union \arcdiag_2)) = H_1 (Y_k) \to H_1 (Y_k, \bdy Y_k)$ from the long exact 
  sequence for a pair, the images of $\mu_i$ are trivial.  Under Poincar\'e 
  duality, this is equivalent to saying that the connecting homomorphism 
  $\delta \colon H^2 (Y_k, \bdy Y_k \setminus F (\arcdiag_1 \union \arcdiag_2)) 
  = H^2 (Y_k, \bdy Y_k) \to H^2 (Y_k)$ sends $\PD (\mu_i)$ to zero.  This 
  implies that
  \[
    c_1 (\spinc'' + \PD (\mu_i)) = c_1 (\spinc'') + 2 \delta (\PD (\mu_i)) = c_1 
    (\spinc''),
  \]
  which in turn implies that $\div (\spinc'') = \div (\spinc'' + \PD (\mu_i))$.
\end{remark}

\fullref{lem:G-set_isom} allows us to understand the fibers of $\pisk_k$ as 
follows.  Viewing $\grPr_{r_1, r_2, r} (\w_{0, k}') \rcoset \Grr (\arcdiag_1 
\union \arcdiag_2 \union \arcdiag) / \grPr_r (\w_{0, k})$ as the disjoint union 
of fibers $\pi_k^{-1} (\spinc'')$, each isomorphic to $\Grr_{r_1, r_2} (\HD' 
\union \HD_k, \spinc'')$, the reduction to $\grPr_{r_1, r_2, r} (\w_{0, k}') 
\rcoset \Grr (\arcdiag_1 \union \arcdiag_2 \union \arcdiag) / \grrPsk$ may be 
seen as obtained by identifying the fibers $\pi_k^{-1} (\spinc'')$ for all 
$\spinc''$ in the same equivalence class under $\eqrelsk$, using the relators 
$A_1 = A_2 = A_3 = e$ (where $e$ is the identity element). We denote the 
resulting $\Grr (\arcdiag_1 \union \arcdiag_2)$-set by $\Grr_{r_1, r_2} (\HD' 
\union \HD_k, [\spinc''])$.  Note that, for each $k$, at least one of the three 
relators is redundant, and so we may consider only two relators; for example, 
setting $A_1 = e$ in $\grPr_r (\w_{0, 0}') \rcoset \Grr (\arcdiag_1 \union 
\arcdiag_2 \union \arcdiag) / \grPr_r (\w_{0, 0})$ immediately implies $A_2 = 
e$ (since $A_1 A_2 = e$).  We illustrate this identification process with two 
examples; as we shall see, $\Grr_{r_1, r_2} (\HD' \union \HD_k, [\spinc''])$ 
may or may not be isomorphic to $\Grr_{r_1, r_2} (\HD' \union \HD_k, 
\spinc'')$.

\begin{example}
  \label{eg:iden_fibers_1}
  Let $\arcdiag_1 = \arcdiag_2 = \eset$, so that $\Yk$ is a sutured manifold 
  for each $k$. Suppose $\grPr_{r} (\w_0') = \subgrp{A_1 + A_3}$. Since $\Grr 
  (\arcdiag)$ is Abelian, the conjugate $\grPr_r (\w_{0, k}')$ is simply 
  $\grPr_r (\w_0')$. Then
  \begin{align*}
    \grPr_{r} (\w_{0, 1}') \rcoset \Grr (\arcdiag) / \grPr_r (\w_{0, 1})& = 
    \subgrp{A_1 + A_3} \rcoset \subgrp{\lambda, A_1, A_2, A_3} / \subgrp{A_2 + 
      A_3} \isom \Z \dirsum \Z,\\
    \grPr_{r} (\w_{0, \infty}') \rcoset \Grr (\arcdiag) / \grPr_r (\w_{0, 
      \infty})& = \subgrp{A_1 + A_3} \rcoset \subgrp{\lambda, A_1, A_2, A_3} / 
    \subgrp{A_1 + A_3} \isom \Z \dirsum \Z \dirsum \Z,\\
    \grPr_{r} (\w_{0, 0}') \rcoset \Grr (\arcdiag) / \grPr_r (\w_{0, 0})& = 
    \subgrp{A_1 + A_3} \rcoset \subgrp{\lambda, A_1, A_2, A_3} / \subgrp{A_1 + 
      A_2} \isom \Z \dirsum \Z,\\
    \grPr_{r} (\w_0') \rcoset \Grr (\arcdiag) / \grrPsk& = \subgrp{A_1 + A_3} 
    \rcoset \subgrp{\lambda, A_1, A_2, A_3} / \subgrp{A_1, A_2, A_3} \isom \Z,
  \end{align*}
  where the first summand on the right hand side is the Maslov component. This 
  situation may arise, for example, in the familiar setting in 
  \cite{ManOzs08:QALinks}, when $\Yk$ is the complement of $L_k \subset S^3$ 
  with meridional sutures, where $L_0$ and $L_1$ are knots and $L_\infty$ is a 
  $2$-component link. The $\Z$-summands in the homological components are then 
  in correspondence with the link components, and may be interpreted as the 
  relative Alexander gradings. On each fixed $\SpinC$-structure (i.e.\ in each 
  Alexander grading), there is a relative Maslov $\Z$-grading. For $k = 
  \infty$, the two relators $A_1 = 0$ (or equivalently, $A_3 = 0$) and $A_2 = 
  0$ identify the $(\Z \dirsum \Z)$-worth of fibers $\pi_\infty^{-1} 
  (\spinc'')$, each isomorphic to $\Z$, with each other uniquely.  For $k = 0, 
  1$, one of the two relators identifies the $\Z$-worth of fibers uniquely, and 
  the other relator is redundant.  Thus, in all three cases, the skein 
  reduction is the projection onto the Maslov component.  Although we do not 
  prove it here, we expect the skein reduction to be related to the relative 
  $\delta$-grading in \cite{ManOzs08:QALinks}, and the graded exact sequence in 
  \fullref{thm:gradings} below to be related to that in 
  \cite{ManOzs08:QALinks}.
\end{example}

\begin{example}
  \label{eg:iden_fibers_2}
  Let $\arcdiag_1 = \arcdiag_2 = \eset$. Suppose $\grPr_r (\w_0') = \subgrp{12 
    \lambda + A_1 + A_2, 18 \lambda + A_1 + A_3}$. Then
  \begin{align*}
    \grPr_{r} (\w_{0, 1}') \rcoset \Grr (\arcdiag) / \grPr_r (\w_{0, 1})
    & \isom \Z \dirsum \cycgrp{2},\\
    \grPr_{r} (\w_{0, \infty}') \rcoset \Grr (\arcdiag) / \grPr_r (\w_{0, 
      \infty})
    & \isom \cycgrp{18} \dirsum \Z,\\
    \grPr_{r} (\w_{0, 0}') \rcoset \Grr (\arcdiag) / \grPr_r (\w_{0, 0})
    & \isom \cycgrp{12} \dirsum \Z,\\
    \grPr_{r} (\w_0') \rcoset \Grr (\arcdiag) / \grrPsk
    & \isom \cycgrp{6},
  \end{align*}
  where again the first summand on the right is the Maslov component. For $k = 
  1$, there are exactly two $\SpinC$-structures in $\SpinC (Y_1, \bdy Y_1, 
  \spinc', \spinc_1)$, with a relative Maslov $\Z$-grading on each.  As the 
  ($\cycgrp{2}$)-summand is generated by $15 \lambda + A_1 = 3 \lambda - A_2$, 
  we may identify $A_1$ with $(-15, 1)$, and $A_2$ with $(3, 1)$.  The relator 
  $A_1 = 0$ does more than identify the two fibers $\pi_1^{-1} (\spinc'')$; 
  since $2 A_1 = (-30, 0) = 0$, it identifies each fiber with itself by a 
  degree-$30$ shift, reducing each fiber to $\cycgrp{30}$.  Similarly, $A_2 = 
  0$  further reduces each fiber to $\cycgrp{6}$. This describes the skein 
  reduction for $k = 1$. In contrast, for $k = \infty$, the element $A_1$ has 
  infinite order, and so the relator $A_1 = 0$ only identifies distinct fibers 
  uniquely and does not reduce the fibers; $A_2 = 0$ also identifies the fibers 
  uniquely. However, since the isomorphisms given by $A_1$ and $A_2$ are 
  different, the two relators together force a reduction of the fibers from 
  $\cycgrp{18}$ to $\cycgrp{6}$.  The skein reduction for $k = 0$ is similar.
  While we do not provide a diagram $\HD'$ where $\grPr_r (\w_0')$ arises, the 
  phenomena this example illustrates could occur in general.
\end{example}

Note that the discussion above remains valid if we choose some other pair of 
complementary base generators $\w_{0, k}'$ and $\w_{0, k}$ in $\HD'$ and 
$\HD_k$. For example, we could have chosen to work with $\wocc{5} \cross \y_1$ 
instead of $\wocc{1} \cross \y_3$ for $k = \infty$. Indeed, the correspondence 
between gradings with respect to different base generators respects both the 
projection $\pi_k$ and the $\Grr (\arcdiag_1 \union \arcdiag_2)$-action on the 
fibers, and commutes with the isomorphism in \fullref{lem:G-set_isom} (since 
$A_i$ is central). This means that the identification process in the skein 
reduction is in fact independent of the pair of complementary base generators.  
This allows us to deal with the case when one or more of $\wocc{1}$, 
$\wocc{3}$, and $\wocc{5}$ does not exist: For example, if $\wocc{5}$ does not 
exist (i.e.\ if there does not exist a generator $\genset (\HD', \spinc')$ in 
idempotent $\Iocc{5}$), it would serve us well to let $\w_{0, 0}' \cross \w_{0, 
  0}$ be $\wocc{1} \cross \z_2$ instead of $\wocc{5} \cross \z_1$ as defined 
above. (In the case that no generator in $\genset (\HD', \spinc')$ is 
compatible with one of the generators of $\genset (\HD_k)$, then the situation 
is in fact simpler: The summand $\BSDAh (\HD' \union \HD_k, \spinc'')$ is 
trivial for all $\spinc'' \in \SpinC (Y_k, \bdy Y_k \setminus F (\arcdiag_1 
\union \arcdiag_2), \spinc', \spinc_k)$, and the skein relation, when 
restricted to the summand associated to $\spinc'$, is an isomorphism between 
modules associated to $\Ynext$ and $\Yprev$.)

We are now ready to state the interpretation of \fullref{prop:box_graded} in 
terms of the identification above.

\begin{maintheorem}
  \label{thm:gradings}
  Let $\Yk = \Y' \glue{\sutF (\arcdiag)} \Bk$ be as in \fullref{thm:general}, 
  and fix $\spinc' \in \SpinC (Y', \bdy Y' \setminus F (\arcdiag_1 \union 
  \arcdiag_2))$. Viewing $\Grrrr (\HD', \HD_k, \spinc', \spinc_k)$ as the 
  disjoint union of fibers $\pi_k^{-1} (\spinc'') \isom \Grr_{r_1, r_2} (\HD' 
  \union \HD_k, \spinc'')$, the isomorphisms in \fullref{lem:G-set_isom} 
  provide a way to identify $\Grr_{r_1, r_2} (\HD' \union \HD_k, \spinc'')$ for 
  $\SpinC$-structures $\spinc''$ that differ by a linear combination of the 
  Poincar\'e duals of the meridians of $\elT_k$, in the process possibly 
  reducing the grading set, such that the resulting grading set $\Grr_{r_1, 
    r_2} (\HD' \union \HD_k, [\spinc''])$ satisfies
  \[
    \Grr_{r_1, r_2} (\HD' \union \HD_k, [\spinc'']) \isom \Grr_{r_1, r_2} (\HD' 
    \union \HD_{k+1}, e_k ([\spinc'']))
  \]
  as $\Grr (\arcdiag_1 \union \arcdiag_2)$-sets. Then, interpreting 
  $\grrrrboxsk$ as assigning to each generator in $\BSDAh (\Yk)$ an equivalence 
  class $[\spinc'']$ and a relative grading in $\Grr_{r_1, r_2} (\HD' \union 
  \HD_k, [\spinc''])$, the morphisms
  \[
    F_k \colon \BSDAh (\Yk) \to \BSDAh (\Ynext)
  \]
  in \fullref{thm:general}, and hence \fullref{thm:main}, send the summand 
  associated to $[\spinc'']$ to $e_k ([\spinc''])$, and preserve the relative 
  gradings in $\Grr_{r_1, r_2} (\HD' \union \HD_k, [\spinc''])$, with grading 
  shifts given by the following diagram:
  \[
    \xymatrix{
      \BSDAh (\HD' \union \HD_0, [\spinc'']) \ar[rr]^{\lambda^{-1} = (-1, 0)} & 
      & \BSDAh (\HD' \union \HD_1, e_0 ([\spinc''])) \ar[ld]^{(0, 0)}\\
      & \BSDAh (\HD' \union \HD_\infty, e_{\infty}^{-1} ([\spinc'']))
      \ar[lu]^{(0, 0)}
    }
  \]
  (Again, the diagram above should be interpreted up to simultaneous and 
  inverse (right) actions by elements of $\Grr (\arcdiag_1 \union \arcdiag_2)$ 
  on any two consecutive arrows.) The homotopy equivalence
  \[
    \BSDAh (\Yk) \homeq \Cone (F_{k+1} \colon \BSDAh (\Ynext) \to \BSDAh 
    (\Yprev))
  \]
  also respects the relative gradings in $\Grr_{r_1, r_2} (\HD' \union \HD_k, 
  [\spinc''])$.
\end{maintheorem}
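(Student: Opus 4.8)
The plan is to obtain \fullref{thm:gradings} by translating \fullref{prop:box_graded} through the graded pairing theorem recalled in \fullref{ssec:gradings_bsfh}, using the identification of grading sets set up in the paragraphs immediately preceding the statement. First I would fix $\spinc'$ and, exactly as in the ungraded proof of \fullref{thm:general}, write $\BSDAh (\Yk) \homeq \BSDAAh (\Y') \boxtensor \BSDh (\Bk)$ via the pairing theorem for bigons \cite[Theorem~8.5.1]{Zar11:BSFH}, so that up to homotopy $F_k$ is $\Id_{\BSDAAh (\Y')} \boxtensor f_k$ conjugated by this equivalence. The graded pairing theorem then says that this equivalence is graded: the homotopy equivalence $\Phi$ carrying $\BSDAAh (\HD', \spinc') \boxtensor \BSDh (\HD_k)$ onto $\bigdirsum_{\spinc''} \BSDAh (\HD' \union \HD_k, \spinc'')$ identifies each fiber $\pi_k^{-1} (\spinc'')$ of the projection $\pi_k$ with the intrinsic grading set $\Grr_{r_1, r_2} (\HD' \union \HD_k, \spinc'')$ as $\Grr (\arcdiag_1 \union \arcdiag_2)$-sets, where $\spinc''$ ranges over $\SpinC (Y_k, \bdy Y_k \setminus F (\arcdiag_1 \union \arcdiag_2), \spinc', \spinc_k)$. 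This pins down the unrefined grading set $\Grrrr (\HD', \HD_k, \spinc', \spinc_k)$ as the disjoint union of these fibers.

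Next I would pass to the skein reduction. Applying the canonical quotient by $\grrPsk = \subgrp{A_1, A_2, A_3}$ on both sides, \fullref{lem:G-set_isom} shows that on the $\SpinC$-decomposed side this quotient is realized by gluing the fibers $\pi_k^{-1} (\spinc'')$ over $\SpinC$-structures lying in a common $\eqrelsk$-class, via the central elements $A_i$, yielding the reduced grading set $\Grr_{r_1, r_2} (\HD' \union \HD_k, [\spinc''])$; this is precisely the identification described before the statement, and it is independent of the choice of complementary base generators $\w_{0, k}'$, $\w_{0, k}$ by centrality of the $A_i$. I would then invoke the observation, already recorded, that $\Grrrrsk (\HD', \HD_k, \spinc', \spinc_k) = \grPr_{r_1, r_2, r} (\w_0') \rcoset \Grr (\arcdiag_1 \union \arcdiag_2 \union \arcdiag) / \grrPsk$ does not depend on $k$ — the dependence having entered only through $\grPr_r (\w_{0, k})$, now absorbed into $\grrPsk$ — so that the projections $\pisk_k$ supply the bijections $e_k \colon E_k \to E_{k+1}$ with $e_{k+2} \comp e_{k+1} \comp e_k = \id$ and $(\pisk_k)^{-1} ([\spinc'']) \isom (\pisk_{k+1})^{-1} (e_k ([\spinc'']))$, giving the $\Grr (\arcdiag_1 \union \arcdiag_2)$-set isomorphism in the statement. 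With this dictionary in place, \fullref{prop:box_graded} translates verbatim: $F_k$ carries the summand over $[\spinc'']$ to the one over $e_k ([\spinc''])$, preserves the relative grading in $\Grr_{r_1, r_2} (\HD' \union \HD_k, [\spinc''])$, and has grading shift $\lambda^{-1} = (-1, 0)$ on the arrow from $\HD_0$ to $\HD_1$ and $(0, 0)$ on the other two arrows, the shift $\lambda$ descending to the Maslov component compatibly with the $\Grr (\arcdiag_1 \union \arcdiag_2)$-action since $\lambda$ is central.

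Finally, for the mapping-cone statement: since $\Id_{\BSDAAh (\Y')} \boxtensor f_{k+1}$ is graded for $\grrrrboxsk$ and, by \fullref{prop:elementary_graded} and \fullref{prop:box_graded}, the auxiliary morphisms $\vphi_k$ and $\vkappa_k$ are graded as well, the homotopy equivalence $\BSDAh (\Yk) \homeq \Cone (F_{k+1})$ supplied by \fullref{lem:hom_alg} is itself graded; transporting it through the pairing equivalence gives the claimed compatibility with the relative gradings in $\Grr_{r_1, r_2} (\HD' \union \HD_k, [\spinc''])$. I expect the main obstacle to be bookkeeping rather than conceptual: one must verify carefully that the fiberwise grading identification furnished by the pairing theorem (the projection $\pi_k$ together with the identification of each fiber with $\Grr_{r_1, r_2} (\HD' \union \HD_k, \spinc'')$) is genuinely compatible with the skein-reduction gluings of \fullref{lem:G-set_isom}, that the redundancy among the relators $A_1, A_2, A_3$ (one being superfluous for each fixed $k$) is handled correctly, and that replacing $\wocc{1}$, $\wocc{3}$, $\wocc{5}$ by alternative complementary base generators when some of these do not exist does not alter the identification — the behaviour illustrated in \fullref{eg:iden_fibers_1} and \fullref{eg:iden_fibers_2}. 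Once these compatibilities are checked, the theorem follows by assembling the pieces above.
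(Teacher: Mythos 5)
Your proposal is correct and follows essentially the same route as the paper: the paper's proof simply cites \fullref{prop:box_graded} together with the discussion preceding the theorem (the fiber identification via $\pi_k$, the skein reduction via \fullref{lem:G-set_isom}, and the bijections $e_k$), and then notes that the pairing homotopy equivalence $\BSDAAh (\Y') \boxtensor \BSDh (\Bk) \homeq \BSDAh (\Yk)$ is graded. Your write-up unpacks that same argument in more detail, including the independence from the choice of complementary base generators, which the paper also addresses in the surrounding discussion rather than in the proof itself.
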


\begin{proof}
  \fullref{prop:box_graded} and the discussion above implies that the morphisms
  \[
    \Id_{\BSDAAh (\Y')} \boxtensor f_{k} \colon \BSDAAh (\Y') \boxtensor \BSDh 
    (\Bk) \to \BSDAAh (\Y') \boxtensor \BSDh (\Bnext)
  \]
  satisfy the description in the theorem. Since the homotopy equivalence
  \[
    \BSDAAh (\Y') \boxtensor \BSDh (\Bk) \homeq \BSDAh (\Yk)
  \]
  is graded, the theorem follows.
\end{proof}

\begin{remark}
  \label{rmk:gradings_closed}
  For the result of applying \fullref{thm:gradings} to \fullref{cor:arbitrary}, 
  the reader is invited to consider \fullref{eg:iden_fibers_2} and other 
  similar cases.
\end{remark}

%%%%%%%%%%%%%%%%%%%%%%%%%%%%%%%%%%%%%%%%%%%%%%%%%%%%%%%

\bibliographystyle{mwamsalphack}
\bibliography{Bibliography}

%%%%%%%%%%%%%%%%%%%%%%%%%%%%%%%%%%%%%%%%%%%%%%%%%%%%%%%

\end{document}